\newtheorem{theorem}{Theorem}[section]
\newtheorem{corollary}[theorem]{Corollary}
\newtheorem{lemma}[theorem]{Lemma}
\newtheorem{proposition}[theorem]{Proposition}
\theoremstyle{definition}
\newtheorem{definition}[theorem]{Definition}
\theoremstyle{remark}
\newtheorem{remark}[theorem]{Remark}
\theoremstyle{remark}
\newtheorem{example}[theorem]{Example}
\numberwithin{equation}{section}
\newcommand{\abs}[1]{\left\vert#1\right\vert}
\newcommand{\set}[1]{\left\{#1\right\}}
\newcommand{\R}{\mathbb R}
\newcommand{\C}{\mathbb C}
\newcommand{\N}{\mathbb N}
\newcommand{\PP}{\mathbb P}
\newcommand{\UU}{\mathbbm 1}
\newcommand{\BB}{{\mathcal B}}
\newcommand{\CC}{{\mathcal C}}
\newcommand{\II}{{\mathcal I}}
\newcommand{\JJ}{{\mathcal J}}
\newcommand{\FF}{{\mathcal F}}
\newcommand{\LL}{{\mathcal L}}
\newcommand{\PPP}{{\mathcal P}}
\newcommand{\TTT}{{\mathcal T}}
\newcommand{\rd}{\mathrm d}
\DeclareMathOperator*{\esssup}{ess\,sup}
\DeclareMathOperator*{\essinf}{ess\,inf}
\DeclareMathOperator*{\diam}{diam}
\begin{document}
\title{Almost sure asymptotic behaviour of Birkhoff sums for infinite measure-preserving dynamical systems}

\author{Claudio Bonanno}
\address{Dipartimento di Matematica, Universit\`a di Pisa, Largo Bruno Pontecorvo 5, 56127 Pisa, Italy}
\email{claudio.bonanno@unipi.it}

\author{Tanja I. Schindler}
\address{Centro di Ricerca Ennio De Giorgi, Scuola Normale Superiore, Piazza dei Cavalieri 3, 56126 Pisa, Italy}
\email{tanja.schindler@sns.it}

\begin{abstract}
We consider a conservative ergodic measure-preserving transformation $T$ of a $\sigma$-finite measure space $(X,\BB,\mu)$ with $\mu(X)=\infty$. Given an observable $f:X\to \R$, we study the almost sure asymptotic behaviour of the Birkhoff sums $S_Nf(x) := \sum_{j=1}^N\, (f\circ T^{j-1})(x)$. In infinite ergodic theory it is well known that the asymptotic behaviour of $S_Nf(x)$ strongly depends on the point $x\in X$, and if $f\in L^1(X,\mu)$, then there exists no real valued sequence $(b(N))$ such that $\lim_{N\to\infty} S_Nf(x)/b(N)=1$ almost surely. 
In this paper we show that for dynamical systems with strong mixing assumptions for the induced map on a finite measure set, there exists a sequence $(\alpha(N))$ and $m\colon X\times \N\to\N$ such that for $f\in L^1(X,\mu)$ we have $\lim_{N\to\infty} S_{N+m(x,N)}f(x)/\alpha(N)=1$ for $\mu$-a.e.\ $x\in X$. Instead in the case $f\not\in L^1(X,\mu)$ we give conditions on the induced observable such that there exists a sequence $(G(N))$ depending on $f$, for which $\lim_{N\to\infty} S_{N}f(x)/G(N)=1$ holds for $\mu$-a.e.\ $x\in X$.
\end{abstract}

\subjclass[2020]{37A40, 37A25, 60F15}
\keywords{Infinite ergodic theory; almost sure limits for Birkhoff sums; $\psi$-mixing; trimmed sums}
\thanks{The authors are partially supported by 
the PRIN Grant 2017S35EHN$\_$004 ``Regular and stochastic behaviour in dynamical systems'' of the Italian Ministry of University 
and Research (MUR), Italy. This research is part of the authors' activity within 
the UMI Group ``DinAmicI'' \texttt{www.dinamici.org} and of the first author's activity within the 
Gruppo Nazionale di Fisica Matematica, INdAM, Italy. 
The second author acknowledges the support of the Centro di Ricerca Matematica
Ennio de Giorgi and of UniCredit Bank R\&D group for financial support through the
‘Dynamics and Information Theory Institute’ at the Scuola Normale Superiore.}

\maketitle
% ----------------------------------------------------------------
\section{Introduction}

Let $T:X\to X$ be a conservative ergodic measure-preserving transformation of a $\sigma$-finite measure space $(X,\BB,\mu)$ and consider an observable $f$, that is a measurable function $f:X\to \R$. The almost sure asymptotic behaviour of the \emph{Birkhoff sums of $f$}
\[
S_Nf(x):= \sum_{j=1}^N\, (f\circ T^{j-1})(x)
\]
is the main result of the famous Birkhoff's Ergodic Theorem. It states that if $\mu$ is a probability measure and $f\in L^1(X,\mu)$, then $S_Nf(x)$ is asymptotic to $N\int_X f\,d\mu$ as $N\to \infty$ for $\mu$-a.e.\ $x\in X$. On the other hand, if $f\not\in L^1(X,\mu)$ the almost sure asymptotic behaviour of $S_Nf$ is not described by any sequence $(b(N))$. In particular, letting $f:X \to \R_{\ge 0}$, one can apply \cite[Cor. 2.3.4]{aa-book} to show that given a sequence $(b(N))$ of positive numbers with $b(N)/N \to \infty$, for $\mu$-almost every (a.e.) $x\in X$ either $\limsup_N S_Nf(x)/b(N) = \infty$ or $\liminf_N S_Nf(x)/b(N) = 0$. 

The situation is different if the measure $\mu$ is $\sigma$-finite but infinite, that is $\mu(X)=\infty$. In this case if $f\in L^1(X,\mu)$ then $S_Nf(x) = o(N)$ for $\mu$-a.e.\ $x\in X$, but again the almost sure asymptotic behaviour of $S_Nf$ is not described by any sequence $(b(N))$ slower than $N$. It is the content of \cite[Thm. 2.4.2]{aa-book} that given a sequence $(b(N))$ of positive numbers, even with $b(N)/N \to 0$, for $\mu$-a.e.\ $x\in X$ either $\liminf_N S_Nf(x)/b(N) = 0$ for all $f\in L^1(X,\mu)$, or there exists a sequence $(N_k)$ such that $\lim_k S_{N_k}f(x)/b(N_k) = \infty$ for all $f\in L^1(X,\mu)$. For infinite measure-preserving dynamical systems, an information about the almost sure asymptotic behaviour of the Birkhoff sums of summable observables is given in Hopf's Ratio Ergodic Theorem (see \cite[Thm 2.2.5]{aa-book}). It is shown that for $f,g\in L^1(X,\mu)$ with $g\ge 0$ and $\int_X g\,d\mu >0$, the ratio $S_Nf(x)/S_Ng(x)$ converges to $\int_X f\,d\mu/\int_X g\,d\mu$ as $N\to \infty$ for $\mu$-a.e.\ $x\in X$. That is, the almost sure asymptotic behaviour of $S_Nf$ is the same for all $f\in L^1(X,\mu)$, and its variability which leads to the negative results we have recalled above, depends on the initial condition $x\in X$. As far as we know, the almost sure asymptotic behaviour of Birkhoff sums has not been studied for non-summable observables in the context of infinite ergodic theory, with the sole exception of \cite{lenci}.

In this paper we restrict our attention to the case that $\mu$ is infinite and study the almost sure asymptotic behaviour of Birkhoff sums for both summable and non-summable observables with respect to sequences. We consider dynamical systems with strong mixing assumptions, namely \emph{$\psi$-mixing} (see Definition \ref{def-psi-mixing}), for the induced map on a finite measure subset $E\subset X$ (see Section \ref{setting} for the detailed assumptions), and prove two main results. First we prove that for $f\in L^1(X,\mu)$ there exist sequences of positive real numbers $(\alpha(N))$ and $(m(N,E,x))$, the second sequence depending additionally on $x$, such that $S_{N+m(N,E,x)}f(x)$ is asymptotic to $\alpha(N) \int_X f\,d\mu$ as $N\to \infty$ for $\mu$-a.e.\ $x\in X$. That is to obtain the same asymptotic behaviour for $\mu$-a.e.\ $x\in X$ we have to change the number of terms to consider in the Birkhoff sums (see Theorem \ref{th-elle1}). Then we prove that for a non-summable observable $f:X\to \R_{\ge 0}$, under suitable assumptions on its induced version $f^E$ (see \eqref{induced-obs}) it is possible to find a sequence $(G(N))$ which depends on $f$ such that $S_Nf(x) \sim G(N)$ as $N\to \infty$ for $\mu$-a.e.\ $x\in X$ (see Theorem \ref{th-notelle1}). The main difference is that in this non-summable case the properties of $f$ play an important role.

The proofs rely on the method of \emph{trimmed sums} (see \eqref{trimmed-birk-sum}), that is sums from which a number of largest entries is deleted. The method to trim sums of independent random variables to prove (pointwise and distributional) limit theorems which fail to hold for the untrimmed sum is long established and most results were developed in the 80th and 90th of the last century. One generally differentiates between different strengths of trimming, according to the number $r$ of largest entries deleted. In the light trimming case the number of deleted entries $r$ is constant for all $n$, while in the intermediate trimming case the number of deleted entries $r(n)$ depends on $n$ and tends to infinity, but with $r(n)=o(n)$.

The first result using trimming in the dynamical systems context was given by Diamond and Vaaler \cite{diamond_estimates_1986} providing a strong law of large numbers for the continued fraction digits under the use of light trimming. More precisely, if $(a_n(x))$ are the coefficients of the continued fraction expansion of a point $x\in [0,1]$, it is proved in \cite{diamond_estimates_1986} that $\lim_{n\to\infty}(\sum_{k=1}^n a_k(x)- \max_{1\leq k\leq n} a_k(x))/(n\log n)=\log 2$ holds for Lebesgue-a.e.\ $x\in [0,1]$. Hence in this case it is enough to trim the sum of the coefficients $(a_n(x))$ by deleting only the largest entry, so $r=1$. These results were generalised by Aaronson and Nakada \cite{aar-nakada} giving strong laws of large numbers under light trimming for sufficiently fast $\psi$-mixing random variables, thus giving the analog to some statement for independent random variables from \cite{kestenmaller}. Haynes further quantified in \cite{haynes} the results by Aaronson and Nakada by giving precise error terms. 

Kesseb\"ohmer and the second author of this paper proved strong laws of large numbers under intermediate trimming using a spectral gap property of the transfer operator, see \cite{kessboehmer_strong_2019} and \cite{kesseboehmer_intermediately_2019} for an application of these results to subshifts of finite type and \cite{haeusler_laws_1987, haeusler_nonstandard_1993, kesseboehmer_strongiid_2019} for these and further reaching results in the independent case. See further \cite{kesseboehmer_mean_2019} for a convergence in mean result concerning the same intermediately trimmed sums. 

The paper is organised as follows. In Section \ref{setting} we describe the setting we consider and state the main results, Theorems \ref{th-elle1} and \ref{th-notelle1}, whose proofs are collected in Section \ref{proofs}. In Section \ref{sec-examples} we describe two classes of dynamical systems to which our results are applicable, and discuss the assumptions on the observables for Theorem \ref{th-notelle1}. We also study in more detail our results for the Farey map, a well-known dynamical system on the interval, also in relation to the results in \cite{lenci} which are shown to be a particular case of ours for this system. Finally, two appendices complete the paper. In particular Appendix \ref{sec:svf} contains definitions and results on slowly varying functions, an important class of functions used in the statement of Theorem \ref{th-notelle1}. 

We conclude with some comments and a discussion on possible directions of future research. As we have stated above, our main results hold for dynamical systems for which there exists a finite measure set $E$ on which the induced map satisfies a $\psi$-mixing condition. It would be interesting to study if this $\psi$-mixing condition could be relaxed. This seems to be highly unlikely in general. Aaronson and Nakada already gave in \cite{aar-nakada} an example of a mixing though not $\psi$-mixing system which does not fulfil a strong law of large numbers after trimming, even though a sufficiently fast $\psi$-mixing system with the same distribution functions would fulfil a trimmed strong law. Haynes gave another simple example with even stronger mixing properties which does not fulfil a lightly trimmed strong law, see \cite[Thm. 4]{haynes}. However, this particular system still fulfils an intermediately trimmed strong law, see \cite{schindler_observables_2018}. Thus, there might be possibilities to generalise Theorems \ref{th-elle1} and \ref{th-notelle1} in this way. In particular, in this paper we have considered assumptions on the system $(X,T,\mu)$ for which light trimming is enough to get the results, actually it has been enough to trim the sums by deleting only the largest entry. As shown in Section \ref{subsec:interval-maps} for maps of the interval with indifferent fixed points, this is related with the order with which the derivative of the map converges to 1 in the indifferent fixed points. Using light trimming we have been able to consider only the so-called ``barely infinite'' situation. It is possible that we can obtain similar results for systems with a different ``order of infinity'' by using intermediate trimming.

\section{The setting and main results} \label{setting}
Let $T:X\to X$ be a conservative ergodic measure-preserving transformation of the measure space $(X,\BB,\mu)$ with $\mu$ a $\sigma$-finite measure with $\mu(X)=\infty$. Let $E\in \BB$ with $\mu(E)=1$, and denote by $\varphi_{_E}$ the \emph{first return time}
\[
\varphi_{_E} : E \to \N\, ,\qquad \varphi_{_E}(x):= \min \set{k\ge 1\, :\, T^k(x)\in E}.
\]  
The first return time is finite for $\mu$-a.e.\ $x\in E$, so we define $T_{_E}(x) := T^{\varphi_{_E}(x)}(x)$ to be the \emph{induced map} $T_{_E}:E \to E$, which is an ergodic measure-preserving transformation of the probability space $(E,\BB|_E,\mu)$. The first return time generates two families of sets, the level sets
\begin{equation} \label{level-sets}
A_n := \set{x\in E\, : \varphi_{_E}(x)=n}
\end{equation}
which are a measurable partition of $E$, and the super-level sets
\begin{equation}\label{super-level-sets}
A_{>n} := \set{x\in E\, :\, \varphi_{_E}(x) > n} = \bigsqcup_{k>n}\, A_k.
\end{equation}
We also use the notation $A_{\ge n} = A_n \cup A_{>n}$. Applying Kac's Theorem one has
\begin{equation} \label{somma-inf}
\sum_{k\ge 1}\, k\, \mu(A_k) = \sum_{n\ge 0}\, \mu(A_{>n}) = \mu(X) = \infty.
\end{equation}
For this reason the vanishing order of the sequence $( \mu(A_{>n}) )$ is called the \emph{order of infinity} of the system $(X,T,\mu)$. It is known that this order is independent of the finite measure subset $E$ we have fixed.

A useful notion is the \emph{longest excursion out of $E$ beginning in the first $N$-steps} defined for $\mu$-a.e.\ $x\in X$ as
\begin{equation}\label{def-m}
m(N,E,x) := 1+\max \set{ k\ge 1\, :\, \exists\, \ell \in \{1,\dots,N+1\} \text{ s.t. } T^{\ell+j}(x) \not\in E,\, \forall\, j=0,\dots,k-1}.
\end{equation}
If we denote by $R_{_{E,N}}(x)$ the \emph{number of visits to $E$ up to time $N$} along the orbit of a point $x$, that is
\begin{equation} \label{numb-visits}
R_{_{E,N}}(x) := \sum_{k=1}^{N+1}\, (\UU_{_E}\circ T^{k-1})(x)
\end{equation}
it is immediate to verify that for $\mu$-a.e.\ $x\in E$
\begin{equation}\label{relations}
m(N,E,x) = \max\set{(\varphi_{_E}\circ T_{_E}^{k-1})(x)\, :\, k=1,\dots, R_{_{E,N}}(x)}.
\end{equation}
A notion related to $m(N,E,x)$ is that of the \emph{longest excursion out of $E$ seen up to time $N$,} defined for $\mu$-a.e.\ $x\in E$ by
\begin{equation}\label{def-w}
w(N,E,x) := 1+ \max \set{ k\ge 1\, :\, \exists\, \ell \in \{1,\dots,N-k\} \text{ s.t. } T^{\ell+j}(x) \not\in E,\, \forall\, j=0,\dots,k-1}. 
\end{equation}
Notice that $w(N,E,x)$ can correspond to an excursion which does not return to $E$ up to time $N$.

In this paper we are interested in studying the pointwise asymptotic behaviour of the Birkhoff sums
\[
S_N f (x) := \sum_{n=1}^{N}\, (f\circ T^{n-1})(x)
\]
for summable and non-summable observables $f:X\to \R$. For $f\in L^1(X,\mu)$ it is a straightforward consequence of Hopf's Ratio Ergodic Theorem \cite[Thm 2.2.5]{aa-book} that $\mu$-a.s.\ $S_N f(x)=o(N)$. Moreover, the exact asymptotic pointwise behaviour of $S_Nf$ cannot be recovered for all $f\in L^1(X,\mu)$ by changing the normalising sequence due to Aaronson's Ergodic Theorem \cite[Thm 2.42]{aa-book}, which basically states that for any sequence of positive real numbers the growth rate of the Birkhoff sums of a non-negative summable observable will almost surely be either over- or under-estimated infinitely often. In this paper we obtain some results for the pointwise asymptotic behaviour of $S_N f$ under strong mixing assumptions for the induced system $(E,T_{_E},\mu)$.

We now state the definitions and the results we need in the general case of random variables. We refer to \cite{bradley} for more definitions.

\begin{definition}\label{def-psi-mixing}
Let $(Y_n)$ be a sequence of random variables on a probability space $(Y,\PP)$, and let $\FF_h^k$, for $0\le h<k\le \infty$, be the $\sigma$-field generated by $(Y_n)_{h\le n\le k}$. The sequence $(Y_n)$ is \emph{$\psi$-mixing} if
\[
\psi(n) := \sup \left\{ \Big| \frac{\PP(B\cap C)}{\PP(B)\PP(C)} -1 \Big|\, :\, B\in \FF_0^j, \, C\in \FF_{j+n}^\infty,\, \PP(B)>0,\, \PP(C)>0,\, j\in \N \right\}
\]
satisfies $\psi(n)\to 0$ as $n\to \infty$.
\end{definition}

We now recall the notion of \emph{lightly trimmed sums}. Given a sequence $(Y_n)$ of random variables on a probability space $(Y,\PP)$ and a point $y\in Y$, for each $N\in \N$ we choose a permutation $\pi$ of $\{1,2,\dots,N\}$ such that $Y_{\pi(1)}(y) \ge Y_{\pi(2)}(y) \ge \dots \ge Y_{\pi(N)}(y)$. For a given $r\in \N_0$, the lightly trimmed sum of the $(Y_n)$ is defined by
\begin{equation}\label{trimmed-birk-sum}
S_N^r(y) := \sum_{n=r+1}^N\, Y_{\pi(n)}(y) 
\end{equation}
that is the sum of the first $N$ random variables trimmed by the largest $r$ entries.
 
The next result is adapted from \cite{aar-nakada} as shown in Appendix \ref{sec:lemma-aar-nakada}.

\begin{lemma}\label{lemma-aar-nakada}
Let $(Y_n)$ be a sequence of non-negative, identically distributed random variables on a probability space $(Y,\PP)$ which is $\psi$-mixing with coefficient $\psi(n)$ fulfilling $\sum_{n\ge 1} \psi(n)/n < \infty$. Let $F$ be the distribution function of the $(Y_n)$, that is $F(y) = \PP (Y_1\le y)$, and let for some $y_0>0$
\begin{align}
 W\coloneqq \min \left\{r\in\mathbb{N}\colon \int_{y_0}^{\infty}\left(\frac{y\left(1-F(y)\right)}{\int_{0}^y\left(1-F(t)\right)\mathrm{d}t}\right)^{r+1} \frac{1}{y}\, \mathrm{d}y<\infty\right\},
\label{eq: cond AN}
\end{align} 
where we set the $\min$ as $\infty$ if such an $r$ does not exist.
Then there exists a sequence $(d(N))$ such that 
\begin{align*}
 \lim_{N\to \infty}\frac{S_N^W(y)}{d(N)}=1, \quad \text{$\PP$-a.s.}
\end{align*}
If we set $a(y):=y/ \int_0^y (1-F(t))\mathrm{d}t$, then $d(n)$ can be set as the inverse function of $a(n)$.

Furthermore, if we denote by $M^r_N(y)$ the $r$-th maximum in $\{Y_1,\ldots, Y_N\}$ then
\begin{equation} \label{eq: M/d_n}
 \lim_{N\to\infty}\frac{M^r_N(y)}{d(N)}=0, \quad \text{$\PP$-a.s.} 
\end{equation}
for all $r> W$. 
 \end{lemma}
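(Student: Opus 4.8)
The plan is to reduce both assertions to the strong law for lightly trimmed sums of non-negative $\psi$-mixing stationary sequences established in \cite{aar-nakada}, and then to read off the statement about the maxima by a telescoping argument. The starting observation, which I would record first, is that the base of the integrand in \eqref{eq: cond AN} never exceeds $1$: since $1-F$ is non-increasing we have $\int_0^y (1-F(t))\,\mathrm{d}t \ge y\,(1-F(y))$, whence
\begin{equation*}
0 \le \frac{y\,(1-F(y))}{\int_0^y (1-F(t))\,\mathrm{d}t} \le 1
\end{equation*}
for every $y$ with $\int_0^y (1-F(t))\,\mathrm{d}t>0$. Consequently the integrand is non-increasing in the exponent $r+1$, so convergence of the integral for $r=W$ forces convergence for every $r\ge W$. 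Thus the Aaronson--Nakada integral test holds simultaneously at all trimming levels $r\ge W$.

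Second, I would invoke the $\psi$-mixing trimmed strong law of \cite{aar-nakada}: under $\sum_n \psi(n)/n<\infty$, for each fixed $r$ for which the integral in \eqref{eq: cond AN} converges the $r$-trimmed sum of \eqref{trimmed-birk-sum} satisfies $S_N^r(y)/b_N\to 1$ almost surely, where the normalising sequence $b_N$ depends on $F$ only and \emph{not} on $r$. The relevant normalisation is exactly the inverse of $a(y)=y/\int_0^y(1-F(t))\,\mathrm{d}t$: solving $a(b_N)=N$ gives the self-consistent truncated-mean relation $b_N = N\int_0^{b_N}(1-F(t))\,\mathrm{d}t$, which is the classical normalisation for trimmed sums. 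Setting $d(N):=b_N$ then yields $S_N^W(y)/d(N)\to 1$ almost surely, the first assertion, and because $d(N)$ is the same sequence for every admissible $r$, we in fact obtain $S_N^r(y)/d(N)\to 1$ almost surely for all $r\ge W$.

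Third, the claim \eqref{eq: M/d_n} on the maxima follows with no further probabilistic input. With the ordering $Y_{\pi(1)}(y)\ge\cdots\ge Y_{\pi(N)}(y)$ used in \eqref{trimmed-birk-sum}, the $r$-th maximum is the difference of two consecutive trimmed sums, $M^r_N(y)=S_N^{r-1}(y)-S_N^r(y)$. For $r>W$ both indices $r-1$ and $r$ are $\ge W$, so both trimmed sums are asymptotic to $d(N)$, and therefore $M^r_N(y)/d(N)\to 1-1=0$ almost surely. (The argument also explains the sharpness of the range $r>W$: for $r=W$ the sum $S_N^{W-1}$ need not be asymptotic to $d(N)$, since the integral test fails at level $W-1$.)

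The genuinely technical part, which I expect to be deferred to the appendix, is the second step: making the passage from the independent case to $\psi$-mixing rigorous under the precise rate $\sum_n\psi(n)/n<\infty$, and checking that the normalisation furnished by the Aaronson--Nakada machinery is exactly the inverse of $a$ rather than merely asymptotically proportional to it. By contrast the monotonicity in $r$ of the first step and the telescoping identity of the third step are elementary once the trimmed strong law is available.
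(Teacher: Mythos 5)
Your proposal is correct, and for the main assertion it follows essentially the same route as the paper: both reduce the lemma to \cite[Theorem 1.1]{aar-nakada}, with $d$ taken as the inverse of $a(t)=t/\int_0^t(1-F(s))\,\mathrm{d}s$, a normalisation depending on $F$ only and not on the trimming level. Where you genuinely differ is in the treatment of \eqref{eq: M/d_n}: the paper obtains it by citing part (ii) of the Aaronson--Nakada theorem and the remark following it, whereas you derive it from the strong law alone, via the telescoping identity $M^r_N(y)=Y_{\pi(r)}(y)=S_N^{r-1}(y)-S_N^r(y)$ combined with your (correct) observation that the base of the integrand in \eqref{eq: cond AN} lies in $[0,1]$ (since $\int_0^y(1-F(t))\,\mathrm{d}t\ge y(1-F(y))$), so that convergence of the integral at level $W$ propagates to every level $r\ge W$ and hence $S_N^{r-1}/d(N)\to 1$ and $S_N^{r}/d(N)\to 1$ simultaneously for every $r>W$. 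Your route is more self-contained --- it needs only part (i) of the cited theorem and makes transparent why the admissible range is exactly $r>W$ --- at the modest cost of the extra monotonicity step. One correction of emphasis: the ``genuinely technical part'' you defer is not re-establishing the $\psi$-mixing machinery (that is entirely Aaronson--Nakada's work and is simply quoted); what the paper's appendix actually does is translate their hypothesis $J_r=\sum_{n\ge 1}\epsilon(n)^r/n<\infty$, where $\epsilon(n)=n(\log^+L)'(n)$ and $L(t)=\mathbb{E}(\min\{t,Y_1\})$, into the integral form \eqref{eq: cond AN} --- an integration by parts showing $L(t)=\int_0^t(1-F(s))\,\mathrm{d}s$, whence $\epsilon(t)=t(1-F(t))/\int_0^t(1-F(s))\,\mathrm{d}s$ --- and verify that $a$ is eventually strictly increasing, so that its inverse $d$ is well defined.
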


We now state our main results. We first consider the Birkhoff sums of an observable $f\in L^1(X,\mu)$. We prove that $S_Nf$ converges almost surely to the spatial mean of the observable if we consider the average with respect to a sequence $\alpha(n)$ growing slower than $n$ and consider more iterates for the observable. The dependence on the point $x$ is given by the number $m(N,E,x)$, as defined in \eqref{def-m}, of iterates for the observable that we have to add. We remark that the sequence $\alpha(n)$ coincides with that found in the Weak Law of Large Numbers for dynamical system which are pointwise dual ergodic and have wandering rate given by a slowly varying sequence (see \cite[Chapter 3]{aa-book}).  

\begin{theorem} \label{th-elle1}
Let $T:X\to X$ be a conservative ergodic measure-preserving transformation of the measure space $(X,\BB,\mu)$ with $\mu$ a $\sigma$-finite measure with $\mu(X)=\infty$, and let $E\in \BB$ with $\mu(E)=1$. Let's assume that the first return time to $E$ satisfies:
\begin{itemize}
\item[(i)] the sequence of random variables $(\varphi_{_E}\circ T_{_E}^{n-1})$ with $n\ge 1$ defined on $(E,\mu)$ is $\psi$-mixing with coefficient $\psi(n)$ fulfilling $\sum_{n\ge 1} \psi(n)/n < \infty$;

\item[(ii)] the super-level sets of $\varphi_{_E}$ satisfy
\[
\sum_{n\ge 1} \frac{n\, (\mu(A_{>n}))^2}{(\sum_{j=0}^{n-1} \mu(A_{>j}))^2} < \infty.
\]
\end{itemize}
Then for all $f\in L^1(X,\mu)$ and for $\mu$-a.e.\ $x\in X$
\[
\lim_{N\to \infty}\, \frac{1}{\alpha(N)}\, \sum_{k=1}^{N+m(N,E,x)}\, (f\circ T^{k-1})(x) = \int_X\, f\, \rd\mu
\]
where
\begin{equation} \label{alfa}
\alpha(n):= \frac{n}{\sum_{j=0}^{n-1}\, \mu(A_{>j})}\, .
\end{equation}
\end{theorem}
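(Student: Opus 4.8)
The plan is to pass to the induced system and reduce the assertion to an almost sure law for \emph{once}-trimmed sums of the return times. First I would reduce to $f\ge 0$ (splitting $f=f^+-f^-$) and, using conservativity and ergodicity, to the case $x\in E$: almost every $x$ enters $E$ at a finite time, the Birkhoff sum over the finite prefix is a constant, and $\alpha(N)\to\infty$ since the wandering rate $\sum_{j=0}^{N-1}\mu(A_{>j})$ is $o(N)$; hence the prefix is negligible after dividing by $\alpha(N)$. For $x\in E$ I introduce the induced observable $f^E(x):=\sum_{j=0}^{\varphi_{_E}(x)-1}(f\circ T^j)(x)$, which by the tower/Kac decomposition satisfies $\int_E f^E\,\rd\mu=\int_X f\,\rd\mu$ and lies in $L^1(E,\mu)$. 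Writing $\varphi_k:=(\varphi_{_E}\circ T_{_E}^{k-1})(x)$, $\tau_R:=\sum_{k=1}^R\varphi_k$ for the $R$-th return time, and noting $S_{\tau_R}f(x)=\sum_{k=1}^R f^E(T_{_E}^{k-1}x)$, Birkhoff's theorem for the ergodic probability-preserving system $(E,T_{_E},\mu)$ gives $\frac1R\sum_{k=1}^R f^E(T_{_E}^{k-1}x)\to\int_X f\,\rd\mu$ almost surely.

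Next I would verify the hypotheses of Lemma \ref{lemma-aar-nakada} for $Y_n=\varphi_{_E}\circ T_{_E}^{n-1}$. The $\psi$-mixing hypothesis is (i). Since $1-F(y)=\mu(A_{>\lfloor y\rfloor})$, the integrand in \eqref{eq: cond AN} with $r=1$ is comparable to the summand in hypothesis (ii), so (ii) forces the trimming level $W\le 1$; conversely $\mu(X)=\infty$ makes the $r=0$ integral $\int^\infty \mu(A_{>y})/\sum_{j<y}\mu(A_{>j})\,\rd y$ diverge (it behaves like $\log\sum_{j<y}\mu(A_{>j})$), so $W\ge 1$ and hence $W=1$. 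The lemma then yields, with $d=\alpha^{-1}$ for $\alpha$ as in \eqref{alfa}, the once-trimmed law $(\tau_R-M_R)/d(R)\to1$ almost surely, where $M_R=M_R^1$ is the largest of $\varphi_1,\dots,\varphi_R$, together with the maximal estimate $M_R^r/d(R)\to0$ for every $r\ge 2$ from \eqref{eq: M/d_n}.

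The decisive step is the following. Fix $N$, set $m=m(N,E,x)$, which by \eqref{relations} equals $M_R$ with $R=R_{_{E,N}}(x)$, and let $R'$ be the number of returns up to time $N+m$, so $\tau_{R'-1}\le N+m<\tau_{R'}$. Because $N<\tau_R$, the excursions with index in $\{R+1,\dots,R'-1\}$ have total length $<m$, so each is $\le m$ and $M_{R'-1}=M_R=m$; consequently $\tau_{R'-1}=S_{R'-1}^1+m$, where $S_{R'-1}^1:=\tau_{R'-1}-M_{R'-1}$. Using that the once-trimmed sums are non-decreasing, the two inequalities bounding $N+m$ translate into $S_{R'-1}^1\le N<S_{R'-1}^1+\min(m,\varphi_{R'})$. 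The key observation is that $\min(m,\varphi_{R'})\le M_{R'}^2$: the two numbers $m=M_{R'-1}$ and $\varphi_{R'}$ are, in some order, the largest and a lower bound for the second largest among $\varphi_1,\dots,\varphi_{R'}$, so their minimum is at most the second maximum $M_{R'}^2$. Since $W=1$ gives $M_{R'}^2/d(R')\to0$, we obtain $N-S_{R'-1}^1=o(d(R'))$ and hence $N\sim S_{R'-1}^1\sim d(R'-1)\sim d(R')$; applying the regularly varying function $\alpha$ yields $R'\sim\alpha(N)$ almost surely.

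Finally I would assemble the pieces by decomposing $S_{N+m}f(x)=\sum_{k=1}^{R'-1}f^E(T_{_E}^{k-1}x)+S_{N+m-\tau_{R'-1}}f(T_{_E}^{R'-1}x)$. The remainder is a partial sum over a single excursion, bounded by $|f|^E(T_{_E}^{R'-1}x)$, which is $o(R')$ almost surely because $|f|^E\in L^1(E,\mu)$ forces $\frac1{R'}|f|^E(T_{_E}^{R'-1}x)\to0$. Combining this with the Birkhoff limit and $R'\sim\alpha(N)$ gives $S_{N+m}f(x)\sim R'\int_X f\,\rd\mu\sim\alpha(N)\int_X f\,\rd\mu$, first for $f\ge0$ and then for general $f\in L^1$ by linearity. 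The main obstacle is exactly the decisive step: showing that padding by the longest excursion $m$ converts the non-convergent untrimmed count $R_{_{E,N}}(x)$ into $R'\sim\alpha(N)$. The mechanism is that the discrepancy $N-S_{R'-1}^1$ is governed by the \emph{second}-largest excursion rather than the largest, which is precisely where the once-trimmed law and the maximal estimate \eqref{eq: M/d_n} for $r\ge 2$ are indispensable; the delicate part is the careful index bookkeeping for $R,R'$ and the verification of the claim $\min(m,\varphi_{R'})\le M_{R'}^2$.
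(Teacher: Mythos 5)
Your proposal is correct, and its core coincides with the paper's own proof: you verify the hypotheses of Lemma \ref{lemma-aar-nakada} for the return times exactly as in Lemma \ref{useful-2} (assumption (ii) forces $W\le 1$, while Kac's formula $\mu(X)=\infty$ forces $W\ge 1$), and your ``decisive step'' is precisely the content of Lemmas \ref{useful-1} and \ref{useful-3}: after padding by $m(N,E,x)$, the discrepancy between $N$ and the once-trimmed return-time sum is controlled by the \emph{second} maximum, which \eqref{eq: M/d_n} with $r=2$ makes $o(d(R'))$, whence $R_{_{E,N,m}}(x)\sim\alpha(N)$. (Your bookkeeping trims at level $R'-1$ where the paper trims at $R'$; since $\tau^1_{_{E,x}}(R')=S^1_{R'-1}+\min(m,\varphi_{R'})$, your two-sided inequality and the paper's $N<\tau^1_{_{E,x}}(R')\le N+M^2_{R'}$ are equivalent, and both rest on the same observation $\min(m,\varphi_{R'})\le M^2_{R'}$.) The one place you genuinely deviate is the final assembly: the paper applies Hopf's Ratio Ergodic Theorem to the pair $f$, $\UU_{_E}$ (Lemma \ref{lem-hopf}), which handles the Birkhoff sum up to the arbitrary time $N+m(N,E,x)$ in one stroke; you instead induce, apply Birkhoff's theorem to $f^E$ on the probability-preserving system $(E,T_{_E},\mu)$, and must then separately reduce to $f\ge 0$ and bound the incomplete last excursion by $|f|^E(T_{_E}^{R'-1}x)=o(R')$ via the standard $L^1$ fact $\frac 1n\, g(T_{_E}^{n-1}x)\to 0$. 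Both routes work: Hopf's theorem is slightly cleaner (no positivity splitting, no dangling excursion to control), while your version is more self-contained in that it only invokes the classical Birkhoff theorem; the trimming machinery, which is the real substance of the proof, is identical in the two arguments.
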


In the following we also give a slightly different formulation of Theorem \ref{th-elle1} which uses $w(N,E,x)$. The main difference is that the number of iterates for the observable is fixed and the dependence on $x$ is found in the renormalising sequence. Thus, this second formulation is more similar to Hopf's Ratio Ergodic Theorem in the spirit.

\begin{theorem} \label{th-elle1-bis}
Under the same assumptions as in Theorem \ref{th-elle1}, for all $f\in L^1(X,\mu)$ and for $\mu$-a.e.\ $x\in X$ it holds
\[
\lim_{N\to \infty}\, \frac{1}{\alpha(N-w(N,E,x))}\, \sum_{k=1}^{N}\, (f\circ T^{k-1})(x) = \int_X\, f\, \rd\mu
\]
where $w(N,E,x)$ is defined in \eqref{def-w} and $\alpha(n)$ is given in \eqref{alfa}. 
\end{theorem}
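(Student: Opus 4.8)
The plan is to deduce Theorem \ref{th-elle1-bis} from Theorem \ref{th-elle1} by exhibiting the two index maps $N\mapsto N+m(N,E,x)$ and $t\mapsto t-w(t,E,x)$ as mutually inverse, so that the two statements become re-indexings of one another. Throughout I work with $x\in E$; the passage to a.e.\ $x\in X$ is standard, since for $\mu$-a.e.\ $x$ the first entry time into $E$ is finite by conservativity and ergodicity, the corresponding initial Birkhoff sum is a finite $x$-dependent constant, and $\alpha(N)\to\infty$ (because $\mu(A_{>n})\to0$, being a tail of the summable sequence $(\mu(A_n))$ with $\mu(E)=1$, forces $\sum_{j=0}^{n-1}\mu(A_{>j})=o(n)$), so such finite shifts are asymptotically negligible.

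The combinatorial core is the pathwise identity $w(N+m(N,E,x),E,x)=m(N,E,x)$ valid for a.e.\ $x\in E$ and all large $N$. To prove it I use the excursion description \eqref{relations}: writing $R:=R_{_{E,N}}(x)$ as in \eqref{numb-visits} and letting $\tau_0<\tau_1<\cdots$ be the successive visit times to $E$ (so $\tau_k=\sum_{i=1}^k(\varphi_{_E}\circ T_{_E}^{i-1})(x)$ with $\tau_0=0$), one has $\tau_{R-1}\le N<\tau_R$ and $m(N,E,x)=\max_{1\le k\le R}(\varphi_{_E}\circ T_{_E}^{k-1})(x)$. Setting $t:=N+m(N,E,x)$ and using $\tau_R=\tau_{R-1}+(\varphi_{_E}\circ T_{_E}^{R-1})(x)$, a direct computation gives
\[
0\le t-\tau_R=(N-\tau_{R-1})+\bigl(m(N,E,x)-(\varphi_{_E}\circ T_{_E}^{R-1})(x)\bigr)<m(N,E,x),
\]
since $N-\tau_{R-1}<(\varphi_{_E}\circ T_{_E}^{R-1})(x)$ and the second bracket is nonnegative. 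Hence all excursions completed by time $N$ together with the in-progress one achieving the maximum are fully seen by time $t$, giving $w(t,E,x)\ge m(N,E,x)$, while every excursion (complete or partial) met in the window $(\tau_R,t]$ has seen-length at most $t-\tau_R<m(N,E,x)$. Therefore $w(t,E,x)=m(N,E,x)$ exactly, so that $t-w(t,E,x)=N$. Plugging this into Theorem \ref{th-elle1} yields, along the random subsequence $t=t(N):=N+m(N,E,x)$,
\[
\frac{S_t f(x)}{\alpha\bigl(t-w(t,E,x)\bigr)}=\frac{S_{N+m(N,E,x)}f(x)}{\alpha(N)}\xrightarrow[N\to\infty]{}\int_X f\,\rd\mu .
\]

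It then remains to fill the values of $t$ skipped by the strictly increasing sequence $(t(N))$. A skip occurs exactly when $m(\cdot,E,x)$ jumps at some $N\to N+1$, which by \eqref{relations} happens only through a new visit at time $\tau_R=N+1$ beginning a record excursion; for $t(N)\le t<t(N+1)$ this gives the explicit formula $w(t,E,x)=\max\set{m(N,E,x),\,t-N-1}$, whence $t-w(t,E,x)\in\set{N,N+1}$. I first treat $f\ge0$: then $S_t f(x)$ is nondecreasing in $t$, so for $t(N)\le t\le t(N+1)$ the ratio is squeezed between $S_{t(N)}f(x)/\max\set{\alpha(N),\alpha(N+1)}$ and $S_{t(N+1)}f(x)/\min\set{\alpha(N),\alpha(N+1)}$; since $\alpha(N+1)/\alpha(N)=\tfrac{N+1}{N}\cdot\bigl(\sum_{j=0}^{N-1}\mu(A_{>j})\bigr)/\bigl(\sum_{j=0}^{N}\mu(A_{>j})\bigr)\to1$, both bounds tend to $\int_X f\,\rd\mu$ as $t\to\infty$ (note $N=N(t)\to\infty$ automatically since $t(N)\to\infty$). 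The general case follows by writing $f=f^+-f^-$ and applying the nonnegative result to the summable observables $f^\pm$. The main obstacle I anticipate is precisely this gap-filling step: establishing the record-excursion formula for $w(t,E,x)$ on the skipped intervals and checking that the quantifiers over $x$, $N$ and $t$ cohere, after which the monotonicity squeeze and the regularity $\alpha(N+1)\sim\alpha(N)$ close the argument routinely.
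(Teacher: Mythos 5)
Your proof is correct, but it follows a genuinely different route from the paper's. The paper does not deduce Theorem \ref{th-elle1-bis} from Theorem \ref{th-elle1}; it reruns the trimming machinery a second time: Hopf's Ratio Ergodic Theorem gives $S_Nf(x)\sim R_{_{E,N}}(x)\int_X f\,\rd\mu$, and then the pathwise inequality \eqref{useful-1-bis}, namely $N-w(N,E,x)\le \tau^1_{_{E,x}}(R_{_{E,N}}(x))\le N-w(N,E,x)+M^2_{_N}(x)$, combined with Lemma \ref{useful-2} and \eqref{eq: M/d_n} from Lemma \ref{lemma-aar-nakada}, yields $R_{_{E,N}}(x)\sim \alpha(N-w(N,E,x))$, which is \eqref{fine-2}. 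You instead use Theorem \ref{th-elle1} as a black box and transfer it through the combinatorial identity $w(N+m(N,E,x),E,x)=m(N,E,x)$; your verification is sound (with $R=R_{_{E,N}}(x)$ and $\tau_{R-1}\le N<\tau_R$, the $R$-th excursion completes by $t=N+m(N,E,x)$ because $0\le t-\tau_R\le m(N,E,x)-1$, and anything in the window $(\tau_R,t]$ has seen-length strictly less than $m(N,E,x)$), as is the record-excursion formula $w(t,E,x)=\max\{m(N,E,x),\,t-N-1\}$ on skipped intervals, since a jump of $m$ forces a visit at time $N+1$ whose excursion ends exactly at $t(N+1)$, so that $t-w(t,E,x)\in\{N,N+1\}$ there. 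What your route buys: no second pass through the $\psi$-mixing/trimmed-sum apparatus, and a conceptual explanation of the two theorems as reparametrisations of one another. What it costs: you need monotonicity of $t\mapsto S_tf(x)$, hence the splitting $f=f^+-f^-$ and the sandwich using $\alpha(N+1)\sim\alpha(N)$ (both fine), and you do not directly produce the intermediate asymptotics \eqref{fine-2}, which the paper reuses downstream (in Lemma \ref{lem-bound-wt}, Lemma \ref{lemma-step1} and Proposition \ref{notelle1-counterex}); note, however, that \eqref{fine-2} is recoverable from your statement by taking $f=\UU_{_E}$, for which $S_N\UU_{_E}(x)=R_{_{E,N-1}}(x)$, and comparing with Hopf's theorem, so nothing essential is lost.
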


\begin{remark}\label{asym-w-m}
The asymptotic behaviour of the sequence $(w(N,E,x))$ has been considered in \cite{anomalous} for some intermittent maps of the interval with infinite measure. Following the same argument and using the relations between $m(N,E,x)$ and $w(N,E,x)$ one can prove that under the conditions of the two previous theorems
\[
\lim_{N\to \infty}\, \frac{\log m(N,E,x)}{\log N} = \lim_{N\to \infty}\, \frac{\log w(N,E,x)}{\log N} = 1
\]
for $\mu$-a.e.\ $x\in X$.
\end{remark}

We have shown that in the case of summable observables the assumptions only depend on the set $E$ and on its first return time. This is in accordance with what one could expect with Hopf's Ratio Ergodic Theorem in mind. We now study the Birkhoff sums of non-negative observables $f\not\in L^1(X,\mu)$. In this case we need assumptions also on the observable itself.

Given $T:X\to X$ and $E\in \BB$ with $\mu(E)=1$ as above, for a measurable function $f:X\to \R$, a standard approach is to introduce the induced observable $f^E$ defined on the full measure set of points $x\in E$ with finite first return time by
\begin{equation} \label{induced-obs}
f^E(x) := \sum_{k=1}^{\varphi_{_E}(x)}\, (f\circ T^{k-1})(x)\, .
\end{equation}
The induced observable simply gives the contribution to the Birkhoff sums $S_Nf$ of the excursions out of $E$ for the orbit of a point $x$. For $\mu$-a.e.\ $x\in E$ let's define the time $\tau_{_{E,x}}(N)$ of the $N$-th return to $E$, which corresponds to the Birkhoff sum of $\varphi_{_E}$ for the system $(E,T_{_E})$. We have
\begin{equation} \label{returns}
\tau_{_{E,x}}(N) := \sum_{k=1}^N\, (\varphi_{_E} \circ T_{_E}^{k-1}) (x)\, .
\end{equation}
Recalling now the notion $R_{_{E,N}}(x)$ of the number of visits to $E$ up to time $N$ along the orbit of $x$ defined in \eqref{numb-visits}, the last visit to $E$ for $x$ up to time $N$ is $T_{_E}^{R_{_{E,N}}(x)-1}(x)$ and the time of this last visit is then given by $\tau_{_{E,x}}(R_{_{E,N}}(x)-1)\le N$. Then the following relation follows for $\mu$-a.e.\ $x\in E$
\begin{equation}\label{f-vs-fe}
S_Nf(x) = \sum_{n=1}^{R_{_{E,N}}(x) -1}\, (f^E \circ T^{n-1}_{_E})(x) + \sum_{k=\tau_{_{E,x}}(R_{_{E,N}}(x)-1)+1}^{N}\, (f\circ T^{k-1})(x)
\end{equation}
with the standard convention that a summation vanishes if the upper index is smaller than the lower index. Moreover, using that $f\ge 0$ and $\tau_{_{E,x}}(R_{_{E,N}}(x))>N$, being $\tau_{_{E,x}}(R_{_{E,N}}(x))$ the time of the first visit to $E$ after time $N$, we immediately obtain
\[
\sum_{n=1}^{R_{_{E,N}}(x) -1}\, (f^E \circ T^{n-1}_{_E})(x) \le S_N f(x) \le \sum_{n=1}^{R_{_{E,N}}(x)}\, (f^E \circ T^{n-1}_{_E})(x)
\]
for $\mu$-a.e.\ $x\in E$. Thus the pointwise convergence of the Birkhoff sums of $f$ could in principle be obtained by the behaviour of the Birkhoff sums of $f^E$ for the induced system $(E,T_{_E},\mu)$. However $f\not\in L^1(X,\mu)$ implies $f^E\not\in L^1(E,\mu)$ (see for examples \cite[Lemma 2]{zwei-notes}), hence we cannot use Birkhoff's Theorem to obtain the pointwise convergence for the Birkhoff sums of $f^E$. A solution is to apply Lemma \ref{lemma-aar-nakada} to the sequence of random variables $(f^E\circ T_{_E}^{n-1})$.

The simplest case to consider is when $f^E$ is constant on the level sets $A_n$, which corresponds to $f$ constant on the level sets of the \emph{hitting time function}
\begin{equation}\label{hitting-time}
h_{_E} : X \to \N_0\, ,\quad h_{_E}(x) := \min \{ k\ge 0\, :\, T^k(x) \in E\}
\end{equation}
which is defined and finite $\mu$-a.e.\ on $X$. Setting
\begin{equation}\label{level-sets-hit}
E_n := \{ x\in X\, :\, h_{_E}(x) = n\}
\end{equation}
with $E_0=E$, for the level sets of $h_{_E}$, if $f(x) = f_k$ for all $x\in E_k$ then
\begin{equation} \label{fe-const}
f^E(x) = \sum_{k=0}^{n-1}\, f_k\, ,\quad \forall\, x\in A_n\, .
\end{equation}
We recall that $\mu(A_{>n}) = \mu(E_n)$ for all $n\ge 0$ (see \cite[Lemma 1]{zwei-notes}).

In the proof of the main result of this section, we use the following property for the waiting times between two consecutive returns to $E$.

\begin{lemma}\label{lem-bound-wt}
Let us consider the function $q:\R_{>0} \to \N$ given by 
\[
q(t) := \min \left\{ j\in \N\, :\, \mu(A_{>j}) \le \frac 1t \right\}
\]
for the super-level sets $(A_{>n})$ defined in \eqref{super-level-sets}. Then 
\[
\mu \left( \left\{ x\in E\, :\, \max_{1\le k \le R_{_{E,n}}(x)} \, (\varphi_{_E}\circ T_{_E}^{k-1})(x) \ge n\, \xi(n) \quad \text{for infinite $n$}\right\} \right) =0
\]
where
\[
\xi(n) := \frac 1n\, q \Big( \alpha(n)\, \log (\alpha(n)) \log^2 (\log (\alpha(n)))\Big)
\]
and $\alpha(n)$ is given in \eqref{alfa}.
\end{lemma}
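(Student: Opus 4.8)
The plan is to show that the set in question, which by \eqref{relations} equals $\limsup_{n}\set{x\in E:\, m(N,E,x)\ge L_n}$ evaluated at $N=n$ with $L_n:=q(\beta(n))$ and $\beta(n):=\alpha(n)\log\alpha(n)\,\log^2(\log\alpha(n))$ (so that $n\,\xi(n)=L_n$), is $\mu$-null. Both $n\mapsto m(n,E,x)$ and $n\mapsto L_n$ are non-decreasing: the former because it is a maximum over the growing index range $1\le k\le R_{_{E,n}}(x)$, the latter because $\beta$ is eventually increasing and $q$ is non-decreasing. This monotonicity lets me run a Borel--Cantelli argument along a lacunary subsequence $n_i:=\lceil\theta^i\rceil$, $\theta>1$: if $\mu$-a.e.\ $x$ satisfies $m(n_{i+1},E,x)<L_{n_i}$ for all large $i$, then for $n\in[n_i,n_{i+1}]$ one gets $m(n,E,x)\le m(n_{i+1},E,x)<L_{n_i}\le L_n$, which is the claim. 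Passing to a subsequence is essential, since the estimate below shows that $\sum_n\mu\set{m(n,E,x)\ge L_n}$ diverges like $\sum_n 1/(\log\alpha(n)\,\log^2(\log\alpha(n)))$, whereas the lacunary sum will converge.

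The heart of the proof is a single measure estimate. Writing $Y_k:=\varphi_{_E}\circ T_{_E}^{k-1}$ and using $\set{k\le R_{_{E,n}}(x)}=\set{\tau_{_{E,x}}(k-1)\le n}$, relation \eqref{relations} gives
\[
\set{x\in E:\, m(n,E,x)\ge L}=\bigcup_{k\ge1}\big(\set{\tau_{_{E,x}}(k-1)\le n}\cap\set{Y_k\ge L}\big).
\]
For each $k$ the event $\set{\tau_{_{E,x}}(k-1)\le n}$ is measurable with respect to $Y_1,\dots,Y_{k-1}$, hence lies in $\FF_0^{k-1}$, while $\set{Y_k\ge L}\in\FF_k^\infty$; the $\psi$-mixing hypothesis of Definition \ref{def-psi-mixing} (with gap $1$) therefore decouples the two factors up to $1+\psi(1)$. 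Summing over $k$, using stationarity ($\mu\set{Y_k\ge L}=\mu(A_{\ge L})$) and the identity $\sum_{k\ge1}\mu\set{\tau_{_{E,x}}(k-1)\le n}=\sum_{k\ge1}\mu\set{R_{_{E,n}}\ge k}=\int_E R_{_{E,n}}\,\rd\mu$, I obtain
\[
\mu\set{x\in E:\, m(n,E,x)\ge L}\ \le\ (1+\psi(1))\,\mu(A_{\ge L})\,\int_E R_{_{E,n}}\,\rd\mu .
\]

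It remains to bound the two factors. For the first, I use $\int_E R_{_{E,n}}\,\rd\mu=\sum_{k=0}^{n}\mu(E\cap T^{-k}E)$, which is comparable to the return sequence of the system and hence, under the regular variation of the wandering rate $\sum_{j=0}^{n-1}\mu(A_{>j})$ available in the setting of Theorem \ref{th-notelle1}, satisfies $\int_E R_{_{E,n}}\,\rd\mu\le C\,\alpha(n)$ for large $n$. For the second, the definition of $q$ gives $\mu(A_{>q(\beta(n))})\le 1/\beta(n)$, and regular variation of $(\mu(A_{>n}))$ upgrades this to $\mu(A_{\ge L_n})=\mu(A_{>q(\beta(n))-1})\le c/\beta(n)$. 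Taking $L=L_{n_i}$ and $n=n_{i+1}$, and using $\alpha(n_{i+1})\asymp\alpha(n_i)$ along the lacunary sequence, the displayed bound becomes
\[
\mu\set{m(n_{i+1},E,x)\ge L_{n_i}}\ \le\ \frac{C'\,\alpha(n_i)}{\beta(n_i)}=\frac{C'}{\log\alpha(n_i)\,\log^2(\log\alpha(n_i))}\ \asymp\ \frac{1}{i\,(\log i)^2},
\]
where the last comparison uses $\log\alpha(n_i)\asymp i$ and $\log\log\alpha(n_i)\asymp\log i$. Since $\sum_i 1/(i(\log i)^2)<\infty$, Borel--Cantelli together with the monotonicity reduction of the first paragraph completes the proof.

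The main obstacle is the first factor: the estimate $\int_E R_{_{E,n}}\,\rd\mu\asymp\alpha(n)$ cannot come from a pointwise bound on $R_{_{E,n}}$, since $R_{_{E,n}}/\alpha(n)$ does not concentrate (it converges in law to a Mittag--Leffler distribution), and must instead be obtained as an expectation through the renewal/dual-ergodic estimate $\sum_{k=0}^n\mu(E\cap T^{-k}E)\asymp\alpha(n)$. The two remaining delicate points are the $\psi$-mixing decoupling of the adjacent blocks $\set{1,\dots,k-1}$ and $\set{k}$ (which forces the use of $\psi(1)$, or of a fixed gap $n_0$ with $\psi(n_0)<\infty$, replacing $\set{\tau_{_{E,x}}(k-1)\le n}$ by the larger event $\set{\tau_{_{E,x}}(k-n_0)\le n}$, if $\psi(1)$ fails to be finite), and the precise calibration of $\beta$: it is exactly the factor $\log\alpha\,\log^2(\log\alpha)$ that converts the non-summable pointwise bound into the summable lacunary bound $1/(i(\log i)^2)$.
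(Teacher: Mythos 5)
Your overall scaffolding (lacunary subsequence plus Borel--Cantelli, stationarity giving the factor $\mu(A_{\ge L})$ per term, and the calibration showing that the factor $\log\alpha\,\log^2(\log\alpha)$ in $\beta(n):=\alpha(n)\log\alpha(n)\,\log^2(\log\alpha(n))$ is exactly what makes the lacunary series summable) is sound, but the proof has a genuine gap at the step you yourself call ``the main obstacle'': the bound $\int_E R_{_{E,n}}\,\rd\mu\le C\,\alpha(n)$. You invoke this as ``the renewal/dual-ergodic estimate $\sum_{k=0}^n\mu(E\cap T^{-k}E)\asymp\alpha(n)$'', but this is neither a hypothesis of the paper nor proved anywhere in it, and it does not follow softly from $\psi$-mixing together with slow variation of the wandering rate: it is precisely a renewal theorem for dependent inter-arrival times (equivalently, a Darling--Kac-type statement for the set $E$), which needs its own decoupling argument. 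The precision required is unforgiving here: the crude bound one does get easily from $\psi$-mixing, namely $\int_E R_{_{E,n}}\,\rd\mu\lesssim 1/\mu(A_{>n})$, exceeds $\alpha(n)$ by a factor of order $\log n$ in the barely infinite case $\mu(A_{>n})\sim 1/n$, and that single logarithm destroys the summability of your final series $\sum_i\alpha(n_i)/\beta(n_i)$. So the one estimate carrying all the weight is asserted, not proved.

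Compounding this, the reason you give for ruling out the pointwise alternative is incorrect, and it is exactly that alternative which the paper uses. You claim $R_{_{E,n}}/\alpha(n)$ cannot concentrate because of a Mittag--Leffler limit law; but under the standing hypotheses the wandering rate is slowly varying (see Remark \ref{wand-rate-slowly-var}), i.e.\ one is in the barely infinite regime where the relevant Mittag--Leffler law is degenerate, and the paper has already established, as \eqref{fine-2} in the proof of Theorem \ref{th-elle1-bis}, that $\limsup_n R_{_{E,n}}(x)/\alpha(n)\le 1$ for $\mu$-a.e.\ $x$. The paper's proof of the lemma is then much lighter than yours: first it shows, for the \emph{deterministic} index range $1\le i\le n$, that $\max_{1\le i\le n}(\varphi_{_E}\circ T_{_E}^{i-1})(x)\le q\bigl(n\log n\,\log^2(\log n)\bigr)$ eventually a.s., using only identical distribution, a plain union bound over $2^{n+1}$ terms, dyadic blocking and Borel--Cantelli --- no mixing and no expectation of $R_{_{E,n}}$ enter at all; then it substitutes the random range by the deterministic one via the a.s.\ bound $R_{_{E,n}}(x)\le(1+o(1))\,\alpha(n)$ from \eqref{fine-2}. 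A secondary point: your patch for the discrepancy between $\mu(A_{\ge q(t)})$ and the defining inequality $\mu(A_{>q(t)})\le 1/t$ appeals to regular variation of the sequence $(\mu(A_{>n}))$ itself, which is also not available --- only the wandering rate $\sum_{j<n}\mu(A_{>j})$ is known to be slowly varying, and that does not prevent $\mu(A_{>q-1})/\mu(A_{>q})$ from being large.
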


\begin{theorem}\label{th-notelle1}
Let $T:X\to X$ be a conservative ergodic measure-preserving transformation of the measure space $(X,\BB,\mu)$ with $\mu$ a $\sigma$-finite measure with $\mu(X)=\infty$, and let $E\in \BB$ with $\mu(E)=1$. 
Let $f:X\to \R_{\ge 0}$ be a non-negative measurable function such that $f\not\in L^1(X,\mu)$ and let $f^E$ be its induced version on $E$ defined in \eqref{induced-obs}. We assume that:
\begin{itemize}
\item[(i)] the sequence of random variables $(\varphi_{_E}\circ T_{_E}^{n-1})$ with $n\ge 1$ defined on $(E,\mu)$ is $\psi$-mixing with coefficient $\psi(n)$ fulfilling $\sum_{n\ge 1} \psi(n)/n < \infty$;

\item[(ii)] there exist $g_1,g_2 : X \to \R_{\ge 0}$ constant on the level sets $(E_n)$ of the hitting time function to $E$ such that:
\begin{itemize}
\item[(ii)-(a)] $g_1(x)\le f(x)\le g_2(x)$ for $\mu$-a.e.\ $x\in X$;

\item[(ii)-(b)] the induced functions satisfy $g_i^E(n) := g_i^E|_{A_n} = n\, L_i(n)$ for $i=1,2$, where $L_i (n)$ are normalised slowly varying function (see \eqref{eq:karamata-norm}), and $L_1(n) \sim L_2(n)$ as $n\to \infty$;

\item[(ii)-(c)] given the function $\xi(n)$ defined in Lemma \ref{lem-bound-wt}, for all functions $\tilde \xi(n)$ with $c\le \tilde \xi(n) \le \xi(n)$ for some constant $c>0$, the functions $L_i$ for $i=1,2$ satisfy $L_i(n\, \tilde\xi(n))\sim L_i(n)$ as $n\to \infty$;

\item[(ii)-(d)] if $\Gamma_i(n):= \min\{ k\in \N\, :\, g_i^E(k)> n\}$ for $i=1,2$, then
\begin{equation} \label{cond-fe}
\int_{0}^{\infty} \frac{y\, \left( \mu(A_{\ge \Gamma_i(y)}) \right)^2}{\left( \int_0^y\, \mu(A_{\ge \Gamma_i(t)}) \mathrm{d}t\right)^2} \mathrm{d}y<\infty\, ;
\end{equation}

\item[(ii)-(e)] if $\ell_i(n) := \sum_{k=0}^n\, \mu(A_{\ge \Gamma_i(k)})$, then $\ell_i(n)$ for $i=1,2$ is super-slowly varying at infinity with rate function itself (see Definition \ref{def-ssv}) and
\begin{equation}\label{cond-uffa}
\ell_i\Big( \alpha(N)\Big) \sim L_i(N)\, \sum_{k=0}^N\, \mu(A_{>k})\, ,
\end{equation}
as $N\to \infty$ for $i=1,2$, where $\alpha(n)$ is defined in \eqref{alfa}.
\end{itemize}
\end{itemize}

Then there exists a function $G:\N \to \R_{\ge 0}$ which is asymptotically equivalent to $f^E$, that is there exists a set $\tilde E \subset E$ with $\mu(\tilde E)=1$ such that for every sequence $(x_n)$ with $x_n\in A_n\cap \tilde E$ for all $n\in \N$ we have $f^E(x_n) \sim G(n)$ as $n\to \infty$, and for $\mu$-a.e.\ $x\in X$
\[
\lim_{N\to \infty}\, \frac{1}{G(N)}\, \sum_{k=1}^{N}\, (f\circ T^{k-1})(x) = 1\, .
\]
\end{theorem}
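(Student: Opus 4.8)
The plan is to reduce $f$ to the two sandwiching observables and then work entirely on the induced probability space $(E,T_{_E},\mu)$. Since $g_1\le f\le g_2$ by (ii)-(a), the Birkhoff sums obey $S_Ng_1\le S_Nf\le S_Ng_2$, so it suffices to prove the statement for each $g_i$ with a normalising function $G_i$ and then use $L_1\sim L_2$ (hence $G_1\sim G_2=:G$) to squeeze $f$; the common profile $G$ is $\sim nL(n)$, i.e.\ $\sim f^E$ on $A_n$. Fixing $g:=g_i$, which is constant on the hitting-time level sets $(E_n)$, its induced version is $g^E|_{A_n}=g^E(n)=nL(n)$ with $L$ normalised slowly varying, by \eqref{fe-const} and (ii)-(b). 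Writing $R:=R_{_{E,N}}(x)$ as in \eqref{numb-visits} and using \eqref{f-vs-fe}, I would split $S_Ng(x)$ into the contribution $\sum_{n=1}^{R-1}(g^E\circ T_{_E}^{n-1})(x)$ of the completed excursions and the truncated contribution $B_N(x)$ of the single excursion still running at time $N$, so that everything is expressed through the Birkhoff sums of $g^E$ and of $\varphi_{_E}$ for the $\psi$-mixing induced system of (i).

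Both induced sequences are non-integrable, so the tool is Lemma \ref{lemma-aar-nakada}. As $g^E$ is increasing in $\varphi_{_E}$, its tail is $\PP(g^E>y)=\mu(A_{\ge\Gamma(y)})$, and hypothesis (ii)-(d) is exactly condition \eqref{eq: cond AN} with exponent $r+1=2$; hence the trimming index of $g^E$ is at most one, deleting the single largest entry already gives $\sum_{n=1}^{K}(g^E\circ T_{_E}^{n-1})-M^1_K\sim d(K)$ with $d$ the inverse of $y\mapsto y/\int_0^y\mu(A_{\ge\Gamma(t)})\,\rd t$, and the second largest entry is negligible by \eqref{eq: M/d_n}. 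Applied to $\varphi_{_E}$, whose tail is $\mu(A_{>n})$ and whose trimming index is likewise at most one in the barely-infinite regime underlying \eqref{cond-uffa}, the lemma gives that $\tau_{_{E,x}}(K)$ minus its longest term is $\sim\alpha^{-1}(K)$, with $\alpha^{-1}$ the inverse of \eqref{alfa}. The decisive structural fact is that $g^E$ is monotone in $\varphi_{_E}$, so the largest summand of the $g^E$-sum and of the return-time sum \eqref{returns} are realised on one and the same longest excursion. Finally, combining \eqref{cond-uffa}, the super-slow variation of $\ell$ and the fixed-point identity $d(K)=K\ell(d(K))$ yields the normalisation $d(\alpha(M))\sim ML(M)$, in particular $d(\alpha(N))\sim NL(N)\sim G(N)$.

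The crux is a cancellation produced by this longest excursion, of length $\mu_*:=\mu_*(N,x)$ (essentially $m(N,E,x)$ by \eqref{relations}). Let $\theta\le\mu_*$ be the portion of it elapsed by time $N$---all of it if it is already completed, only its initial segment if it is still in progress. The remaining $N-\theta$ units of time are spent in the other, shorter excursions: removing the longest term from the return-time sum shows there are $\approx\alpha(N-\theta)$ of them, and the trimmed law for $g^E$ with $d(\alpha(N-\theta))\sim(N-\theta)L(N-\theta)$ makes their observable mass $\sim(N-\theta)L(N)$, their own largest term being the second largest overall and hence negligible. The elapsed part of the longest excursion contributes $g^E(\mu_*)-g^E(\mu_*-\theta)\sim\theta L(N)$, since $g^E(y)=yL(y)$ has increment density $\sim L(N)$ across the window. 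The two pieces add to $\big((N-\theta)+\theta\big)L(N)=NL(N)\sim G(N)$: the observable surplus $\theta L(N)$ of the one long excursion exactly fills the deficit created by the $\theta$ units of time it withdraws from the ordinary returns. I would realise this as a two-sided squeeze between $\sum_{n=1}^{R-1}(g^E\circ T_{_E}^{n-1})$ and $\sum_{n=1}^{R}(g^E\circ T_{_E}^{n-1})$, estimating $B_N(x)$ accordingly, and I would use Lemma \ref{lem-bound-wt} ($\mu_*\le N\xi(N)$ eventually) together with (ii)-(c) to replace every occurrence of $L(\mu_*)$, $L(\mu_*-\theta)$ and $L(N-\theta)$ by $L(N)$.

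The main obstacle is precisely this longest, possibly in-progress, excursion. Because (ii)-(d) only bounds the trimming index by one, the single largest excursion need not be $o(G(N))$; indeed for $\mu$-a.e.\ $x$ there are infinitely many $N$ at which $\mu_*$ is comparable to $N$, and at such times $S_Ng$ cannot be read off from the strong law alone. The whole difficulty is to show that the surplus of this one excursion is asymptotically annihilated by the matching shortfall in $R=R_{_{E,N}}(x)$, simultaneously in the completed and in the truncated in-progress regimes and uniformly over the admissible range $1\le\mu_*\le N\xi(N)$. This synchronisation is exactly what the regularity hypotheses (ii)-(c), (ii)-(e) and the super-slow variation of $\ell$ are engineered to supply---they make $L$ flat on the relevant window and force $d(\alpha(N))\sim NL(N)$---so that the two contributions recombine cleanly. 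Once the limit is established for $g_1$ and $g_2$, the equivalence $L_1\sim L_2$ collapses the squeeze and transfers it to $f$, with $G$ the common asymptotic profile of $g_1^E$ and $g_2^E$, hence of $f^E$.
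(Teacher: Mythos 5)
Your architecture coincides with the paper's: sandwich $f$ between $g_1,g_2$ via (ii)-(a), pass to the induced system through \eqref{f-vs-fe}, apply Lemma \ref{lemma-aar-nakada} to $(g_i^E\circ T_{_E}^{n-1})$ (tail $\mu(A_{\ge \Gamma_i(y)})$, so (ii)-(d) gives trimming index $W\le 1$ and non-integrability gives $W>0$), identify the norming $d(\alpha(N))\sim N L(N)$ from (ii)-(e) and de Bruijn conjugation (the paper's Lemma \ref{lemma-step1}), and close by cancelling the record excursion's surplus against the time it withdraws from the trimmed bulk (the paper's \eqref{first-step} and Lemmas \ref{lemma-2}--\ref{lemma-5}). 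So the route is the same; the problem lies in one specific step.

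The step that fails as stated is the claim that Lemma \ref{lem-bound-wt} together with (ii)-(c) lets you ``replace every occurrence of $L(\mu_*)$, $L(\mu_*-\theta)$ and $L(N-\theta)$ by $L(N)$'' uniformly over the range $1\le \mu_*\le N\xi(N)$. Hypothesis (ii)-(c) only covers functions $\tilde\xi$ with $c\le\tilde\xi(n)\le\xi(n)$ for some constant $c>0$; it gives nothing when the relevant ratio degenerates to $0$, and in that regime the asserted equivalence is genuinely false: for $L=\log$ (which is normalised slowly varying) and $\mu_*\asymp\sqrt N$ one has $L(\mu_*)\sim\tfrac12\,L(N)$. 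Since for a.e.\ $x$ the ratios $\mu_*/N$, $(N-\theta)/N$ and $(\mu_*-\theta)/N$ do pass through arbitrarily small values infinitely often, the uniform replacement cannot be justified. What saves the argument is that in exactly those degenerate regimes the terms carrying the bad $L$-values are negligible or can be bypassed, and establishing this is the real content of the paper's case analysis: along non-record excursions ($\II_j$) the in-progress term is a second maximum, hence $o\bigl(d(R_{_{E,N}}(x))\bigr)$ by \eqref{eq: M/d_n}; along record excursions near completion ($\JJ_j^\eta$) one uses Lemma \ref{lem: an bn asym1} and Lemma \ref{lemma-somme}; far from completion the paper splits again according to whether $(U_{j+1}-U_j)/U_j\to 0$ --- handled by the monotone squeeze $S_{U_j}f<S_{N_j}f<S_{U_{j+1}}f$ with no use of (ii)-(c) at all --- or has positive liminf, which is the only place where (ii)-(c) is invoked, legitimately, because there the ratio is bounded below by a constant. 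Your plan needs this dichotomy inserted where you currently assert uniformity; with it, it becomes the paper's proof.
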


In Section \ref{subsec-notelle1} we discuss the assumptions of Theorem \ref{th-notelle1} and consider particular cases for the observable $f$ and specific dynamical systems.

\section{Examples and applications} \label{sec-examples}

We first describe two classes of dynamical systems to which our results apply. Then we discuss the assumptions and the result of Theorem \ref{th-notelle1}. Some proofs of statements in this section are collected in Section \ref{app-minor} to ease the reading of the paper.

\subsection{Maps of the interval with an indifferent fixed point} \label{subsec:interval-maps}
This class of maps is considered the easiest example of non-uniformly hyperbolic systems, and thus the easiest model of a system with ``anomalous'' dynamical phenomena. Here we use the general approach introduced in \cite{zwei}. 

Let $T:[0,1]\to [0,1]$ be a piecewise monotonic $C^2$ map with respect to a partition $\PPP$ of open subintervals, that is $\PPP$ is a finite or countable family of open subintervals $(P_j)_{j\in J}$ such that $m([0,1]\setminus \cup_j P_j)=0$, being $m$ the Lebesgue measure, and $T|_{P_j}$ is $C^2$ and strictly monotonic for all $j\in J$. 

\begin{definition}\label{def-afn-maps}
A map $T:[0,1]\to [0,1]$ which is piecewise monotonic and $C^2$ with respect to a partition $\PPP=(P_j)_{j\in J}$ of open subintervals defines a \emph{basic AFN-system} if it is conservative and ergodic with respect to $m$ and if
\begin{itemize}
\item[(A)] $T''/(T')^2$ is bounded on $\cup_j P_j$;
\item[(F)] $T\PPP = \{ T(P_j) \, :\, j\in J\}$ is a finite collection;
\item[(N)] there exists a non-empty finite collection $\tilde \PPP \subset \PPP$ such that each $P_j \in \tilde \PPP$ has an \emph{indifferent fixed point} $x_j$, that is
\[
\lim_{x\to x_j,\, x\in P_j}\, T(x) = x_j \quad \text{and} \quad T'(x_j) := \lim_{x\to x_j,\, x\in P_j}\, T'(x) = 1\, ,
\]
which is a \emph{one-sided regular source}, that is $T'$ decreases on $P_j \cap (0, x_j)$ and increases on $P_j \cap (x_j,1)$. Moreover $T$ is uniformly expanding away from the indifferent fixed points, that is for all $\epsilon>0$ there exists $\rho_\epsilon>1$ such that
\[
|T'(x)|\ge \rho_\epsilon\, , \quad \forall\, x\in [0,1]\setminus \bigcup_{P_j\in \tilde \PPP}\, P_j \cap (x_j-\epsilon, x_j+\epsilon)\, .
\] 
\end{itemize}
\end{definition}

By \cite[Thm A]{zwei} a basic AFN-system admits an absolutely continuous invariant measure $\mu$ with a lower semicontinuous density $h(x)$ of the form
\[
h(x) = c(x) \,  g(x)\, , \quad \text{with}\quad g(x) = \left\{ \begin{array}{ll} \frac{x-x_j}{x-(T|_{P_j})^{-1}(x)}\, , & \text{for }\, x\in P_j\in \tilde \PPP \\[0.2cm] 1\, , & \text{for }\, x\in P_j\not\in \tilde \PPP \end{array} \right.
\]
and $c(x)$ satisfying $0< C^{-1} \le c(x) \le C$ for some constant $C>0$.

Examples of basic AFN-systems are the Pomeau-Manneville maps, the Liverani-Saussol-Vaienti maps and the Farey map. They all share the following structure. Let $\TTT$ be the family of maps of $[0,1]$ for which there is a partition $\PPP=\{ P_0,P_1\}$ of $[0,1]$ into two open intervals with respect to which they are piecewise monotonic and $C^2$, and $T(P_0)=T(P_1)=(0,1)$. Moreover the collection $\tilde \PPP$ contains only the set $P_0=(0,\bar x)$, and the maps have an indifferent fixed point at $x_0=0$ at which they satisfy
\begin{equation} \label{class-t}
T(x) = x+ C\, x^{1+p} +o(x^{1+p})\, , \quad \text{for $x\in P_0$ and $x\to 0^+$}
\end{equation}
with $p\ge 1$ for some constant $C>0$, and are uniformly expanding away from $x_0$. 

By \cite[Lemma 8]{zwei} there is a set $E\subset [0,1]$ with $\mu(E)<\infty$ such that the induced map $T_{_E}$ is piecewise monotonic and $C^2$ with respect to a partition $\PPP_{E}$, satisfies conditions (A) and (F), and is uniformly expanding (U). It is then called an AFU-system. Finally \cite[Thm 1]{aar-nak-mix} gives that if $T_{_E}$ is weakly mixing and $\inf_{n\ge 1, Q\in \PPP^n_{E}} m(T_{_E}^n (Q)) >0$, then the sequence of random variables $(\varphi_{_E}\circ T_{_E}^{n-1})$ with $n\ge 1$ defined on $(E,\mu)$ is $\psi$-mixing with coefficient $\psi(n)$ fulfilling $\psi(n) \le K\, \theta^n$ for some constants $K>0$ and $\theta\in (0,1)$. Thus assumption (i) of Theorems \ref{th-elle1} and \ref{th-notelle1} is satisfied.

For the maps in the family $\TTT$ one can choose $E=P_1$ to obtain an exact AFU-system $T_{_E}$ which satisfies the assumptions of \cite[Thm 1]{aar-nak-mix}.

Moreover we need information on the super-level sets $(A_{>n})$.

\begin{definition}\label{nice-expansions}
We say that a basic AFN-system $T$ admits \emph{regularly varying nice expansions} if for all $P_j\in\tilde \PPP$ there are $p_j\geq 1$ and $L_j$ slowly varying functions at infinity (see Appendix \ref{sec:svf}) such that 
\[
 T(x)\sim x+ \left|x-x_j\right|^{1+p_j}\, \left(L_j\left(1/\left|x-x_j\right|\right)\right)^{-1}\, , \quad \text{for $x\in P_j$ and $x\to x_j$.}
\]
\end{definition}

Using then computations similar to those in \cite[Lemma 2]{thaler}, one can show that a basic AFN-system with regularly varying nice expansions satisfies assumption (ii) of Theorem \ref{th-elle1} if $p:= \max_{P_j \in \tilde \PPP} p_j =1$ and $L(y):= \max_{P_j\in \tilde \PPP} |L_j(y)|$ satisfies
\begin{equation} \label{ass-L}
\int_{1}^\infty\, \left(\frac{L(y)}{\int_1^y\frac{L(t)}{t} \mathrm{d}t}\right)^2\, \frac 1y \,\mathrm{d}y < \infty\, .
\end{equation}
Indeed if $p=1$ the super-level sets satisfy $\mu(A_{>n}\cap P_j) \sim |L_j(n)|/n$ as $n\to \infty$ for all $P_j\in \tilde \PPP$, so that by \eqref{ass-L} assumption (ii) is satisfied.

Maps in $\TTT$ have $L_0(y)\equiv C$, so that the previous condition is easily satisfied.

\subsection{Fibred systems related to multi-dimensional continued fraction expansions} \label{subsec-fibred}
The class of systems in dimension greater than one for which the $\psi$-mixing property has been studied includes the so-called \emph{fibred systems} on compact and connected subsets $E$ of $\R^d$.

The notion of fibred system was introduced in \cite{schw} in relation to the theory of real number expansions. Here we use results from \cite{nak-nat}. We first recall the definition of a fibred system.

\begin{definition} \label{def-fibred}
    Let $E$ be a compact and connected subset of $\R^d$, with the
    Borel $\sigma$-algebra $\BB$ and let $m$ denote the
    $d$-dimensional normalised Lebesgue measure on $E$. Let $V$ be a
    measurable map of $E$ onto itself. The pair $(E,V)$ is called a
    \emph{fibred system} if it satisfies the following properties.
    \begin{itemize}
      \item[(h1)] There exists a finite or countable measurable
        partition $\CC=\{C_j\}_{j\in J}$ of $E$ into open sets such that the
        restriction of $V$ to $C_j$ is injective for all $j\in J$.
      \item[(h2)] The map $V$ is measurable and non-singular. To simplify the next conditions we assume that $V|_{C_j}$ is differentiable for all $j\in J$.
    \end{itemize}
    For $j\in J$, we denote by $\phi_j$ the inverse of the
    restriction $V|_{C_j}$. Given the iterated
    partition $\CC^n = \bigvee_{k=0}^{n-1}\, V^{-k} \CC$, we denote by $\phi_{j_1,\,\dots,\,j_n}$ the local inverse of
    $V^n$ restricted to $C_{j_1,\,\dots,\,j_n} \in \CC^n$.    
    \begin{itemize}
      \item[(h3)] There exists a sequence $(\sigma(n))_{n\geq 0}$ with
        $\sigma(n)\rightarrow 0$ as $n\rightarrow \infty$ and such
        that
        \[
            \sup_{(j_1,\,\dots,\,j_n)\in J^n}\,\diam
            C_{j_1,\,\dots,\,j_n}\le \sigma(n).
        \]
      \item[(h4)] There exist a finite number of measurable subsets
        $U_1,\,\ldots,\,U_N$ of $E$ such that for any cylinder
        $C_{j_1,\,\dots,\,j_n}$ of positive measure, there exists
        $U_i$ with $1\le i\le N$ such that $V^n(C_{j_1,\,\dots,\,j_n})
        = U_i$ up to measure-zero sets.
      \item[(h5)] There exists a constant $\lambda\ge 1$ such that
        \[
            \esssup_{V^n(C_{j_1,\,\dots,\,j_n})} |D\phi_{j_1,\,\dots,\,j_n}|
            \le \lambda\,\essinf_{V^n(C_{j_1,\,\dots,\,j_n})}
            |D\phi_{j_1,\,\dots,\,j_n}|
        \]
        where $D\phi_{j_1,\dots,j_n}$ denotes the Jacobian determinant
        of $\phi_{j_1,\,\dots,\,j_n}$.
      \item[(h6)] For any $1\le i\le N$, $U_i$ contains a proper
        cylinder.
      \item[(h7)] There is a constant $r_1 >0$ such that
        \[
            \left|D\phi_{j_1,\,\dots,\,j_n} (p_1) - D\phi_{j_1,\,\dots,\,j_n}
              (p_2) \right| \le r_1 m(C_{j_1,\,\dots,\,j_n}) \| p_1 - p_2\|
        \]
        for any $p_1,p_2 \in U_i$ and all $i$.
      \item[(h8)] There is a constant $r_2 >0$ such that
        \[
            \left\| \phi_{j_1,\,\dots,\,j_n} (p_1) - \phi_{j_1,\,\dots,\,j_n}
              (p_2) \right\| \le r_2 \sigma(n)\| p_1 - p_2\|
        \]
        for any $p_1,p_2 \in U_i$ and all $i$.
      \item[(h9)] Let $\mathcal F$ be a finite partition generated by
        $U_1,\,\dots,\,U_N$ and denote by ${\mathcal F}^c _\ell$ the
        cylinders in $\CC^\ell$ that are not contained in any element of
        $\mathcal F$. Then, as $\ell\rightarrow \infty$
        \[
            \gamma(\ell) \coloneqq \sum_{C(j_1,\,\dots,\,j_\ell) \in {\mathcal
                F}^c _\ell}\, m(C(j_1,\,\dots,\,j_\ell)) \rightarrow 0.
        \]
    \end{itemize}
\end{definition}

By \cite[Thm 2]{nak-nat}, a fibred system $(E,V)$ is $\psi$-mixing, with respect to the iterated partition $\CC^n$, and there exist constants $K>0$ and $\theta\in (0,1)$ such that
\begin{equation} \label{stima-psi}
\psi(n) \le K\, \left( \theta^{\sqrt{n}} + \sigma(\sqrt{n}) + \gamma(\sqrt{n})\right)\, ,
\end{equation}
where $\sigma(k)$ and $\gamma(k)$ are defined in (h3) and (h9).

Examples of fibred systems are obtained in the study of multi-dimensional continued fractions algorithms. Here we consider the two-dimensional system studied in \cite{mio}, which is related to the two-dimensional continued fractions algorithm introduced in \cite{garr}. Let
\[
    \triangle \coloneqq \set{(x,y)\in \R^2 \,:\, 1 \ge x \ge y \ge 0}
\]
and define the partition $\{\Gamma_0, \Gamma_1\}$ of $\triangle$ by
\[
    \Gamma_0 \coloneqq \set{(x,y)\in \R^2 \,:\, 1 \ge
    x \ge y > 1-x},
\]
and
\[
    \Gamma_1 \coloneqq  \set{(x,y)\in \R^2 \,:\,
    1-y \ge x\ge y\ge 0}.
\]
We consider the map $S:\triangle \rightarrow \triangle$ defined as
\[
    S(x,y) \coloneqq
    \begin{cases}
        \left(\frac yx,\,\frac{1-x}{x}\right) & \text{if }(x,y)\in\Gamma_0\\[0.2cm]
        \left(\frac{x}{1-y},\,\frac{y}{1-y}\right) & \text{if
        }\,(x,y)\in\Gamma_1
    \end{cases}.
\]
In \cite{mio} it is proved that $S$ admits an absolutely continuous invariant measure $\mu$ with density $h(x,y)=\frac{1}{xy}$, so that $\mu(\triangle)=\infty$, and that the system $(\triangle, S,\mu)$ is ergodic and conservative. 

The next result shows that there exists a set $E\subset \triangle$ for which assumption (i) of Theorem \ref{th-elle1} holds. The proof of the proposition is in Section \ref{app-minor}.

\begin{proposition}\label{prop-2dfarey}
Let $E\subset \triangle$ be the triangle with vertices
$Q_1=\left(\frac 12, \frac 12\right)$, $Q_2= \left(\frac 23, \frac 13\right)$ and $Q_3=(1,1)$, with the sides
$Q_1Q_2$ and $Q_2Q_3$ not included. Then $\mu(E)<\infty$, the induced map $S_{_E}$ on $E$ is $\psi$-mixing with respect to the level sets $(A_n)$ of the return time function $\varphi_{_E}$, and $\sum_{n\ge 1}\, \psi(n)/n <\infty$.
\end{proposition}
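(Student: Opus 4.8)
The plan is to realise the induced system $(E,S_{_E})$ as a fibred system in the sense of Definition \ref{def-fibred} and then to read off the conclusion from the $\psi$-mixing bound \eqref{stima-psi}. The finiteness of $\mu(E)$ is immediate: $E$ is the convex hull of $Q_1,Q_2,Q_3$, and all three vertices have coordinates bounded away from $0$ (the smallest occurring $x$-coordinate is $\tfrac12$ and the smallest $y$-coordinate is $\tfrac13$), so by convexity $E$ stays away from both coordinate axes; hence the density $h(x,y)=1/(xy)$ is bounded on $E$ and $\mu(E)<\infty$.

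First I would describe the branches of $S_{_E}$ explicitly. Each first-return orbit out of $E$ is coded by a finite word in the two branches of $S$ associated with $\Gamma_0$ and $\Gamma_1$, so every branch of $S_{_E}$ is a composition of the corresponding projective (linear fractional) inverse branches of $S$, restricted to the cylinder of points sharing that return itinerary. These cylinders constitute the partition $\CC$ of (h1); since the return time is constant on each of them, each cylinder is contained in a single level set $A_n$ of $\varphi_{_E}$, so $\CC$ refines the level-set partition and $\varphi_{_E}$ is $\CC$-measurable. Injectivity and non-singularity on cylinders (h1)--(h2) follow because the branches of $S$ are diffeomorphisms onto their images, while the finitely many return-images together with the geometry of $E$ give the Markov-type conditions (h4) and (h6). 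The geometric contraction of cylinder diameters (h3) and the decay $\gamma(\ell)\to 0$ in (h9) follow from the uniform expansion that $S_{_E}$ inherits from the expansion of $S$ along return orbits away from the cusp at the origin.

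The delicate step, which I expect to be the main obstacle, is verifying the distortion estimates (h5), (h7) and (h8) for the inverse branches $\phi_{j_1,\dots,j_n}$ of $S_{_E}^n$. These are compositions of the finitely many projective inverse branches of $S$, and the point is to bound, uniformly in the word length, the ratio of $\esssup$ to $\essinf$ of the Jacobian determinant and the Lipschitz constants in (h7)--(h8). A standard bounded-distortion argument via the chain rule and uniform expansion should do this, but one must control the contributions coming from the boundary of $E$ carefully and track the dependence of the constants on $n$; this is where essentially all of the technical work lies. With (h1)--(h9) in hand, \cite[Thm 2]{nak-nat} yields $\psi$-mixing with respect to $\CC^n$ together with the bound \eqref{stima-psi}.

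It remains to transfer this to the sequence $(\varphi_{_E}\circ S_{_E}^{n-1})$ and to check summability. Since $\CC$ refines the level-set partition, the $\sigma$-algebra generated by $(\varphi_{_E}\circ S_{_E}^{k})_{0\le k\le m}$ is contained in the one generated by $\CC^{m+1}$; taking suprema over the correspondingly smaller families of events shows that this sequence is $\psi$-mixing with coefficient no larger than that of $\CC^n$. Finally, for the uniformly expanding induced map the quantities $\sigma(k)$ and $\gamma(k)$ decay geometrically, so \eqref{stima-psi} provides constants $c,C>0$ with $\psi(n)\le C\,e^{-c\sqrt n}$; since $\sum_{n\ge1} e^{-c\sqrt n}/n<\infty$, the condition $\sum_{n\ge1}\psi(n)/n<\infty$ follows, completing the proof.
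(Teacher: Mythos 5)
Your overall strategy coincides with the paper's: realise $(E,S_{_E})$ as a fibred system in the sense of Definition \ref{def-fibred}, invoke \cite[Thm 2]{nak-nat} to get $\psi$-mixing with the bound \eqref{stima-psi}, and deduce summability from the decay of $\sigma(k)$ and $\gamma(k)$. Your argument for $\mu(E)<\infty$ is correct, and so is the transfer of the $\psi$-mixing coefficient from a finer cylinder partition to the coarser level-set partition via inclusion of the generated $\sigma$-fields --- although that last step turns out to be unnecessary: the paper (citing \cite[Theorem 3.1]{mio}) uses that the fibred-system partition can be taken to be the level sets $(A_n)$ themselves.

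The genuine gaps sit exactly where the technical content lies. First, you defer (h5), (h7), (h8) to ``a standard bounded-distortion argument via the chain rule and uniform expansion''; but the map $S$ is not uniformly expanding --- it has a segment of indifferent fixed points --- so the branches of $S_{_E}$ are compositions of arbitrarily many nearly neutral projective maps, and no off-the-shelf distortion argument applies. This verification is precisely the content of \cite{mio}, which the paper simply cites rather than reproves. Second, and more critically, your summability conclusion rests on the assertion that $\sigma(k)$ decays geometrically because of ``uniform expansion that $S_{_E}$ inherits from $S$''; that inheritance is unsupported for the same reason, and in the paper the decay is obtained from the explicit estimate $\sigma(k)\le C\, d(k)$ proved in \cite{mio} (Proposition 3.9 there), where $d(k)$ is governed by the recursion $f_{k+3}=f_{k+2}+f_k$, so that $\sigma(k)=O(\lambda^{-k})$ with $\lambda>1$ the real root of $t^3-t^2-1$ --- a computation specific to this map, not a generic expansion fact. (In fact any polynomial rate $\sigma(k)=O(k^{-\delta})$ with $\delta>0$ would give $\sum_{n}\psi(n)/n<\infty$ thanks to the $\sqrt{n}$ in \eqref{stima-psi}, but some rate has to be established.) Third, a smaller point: the decay $\gamma(\ell)\to 0$ is not a consequence of expansion at all; it is a Markov-structure property, and what actually holds here is $\gamma\equiv 0$, because (h4) is satisfied with $N=1$ and $U_1=E$, i.e.\ every cylinder maps onto all of $E$.
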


Finally, assumption (ii) of Theorem \ref{th-elle1} is indirectly studied in \cite[Appendix B]{mio} but not proved to hold. Its validity implies the wandering rate $w_n(E):=\sum_{k=0}^{n}\, \mu(A_{>n})$ of $E$ to be slowly varying as discussed in \cite{aar-nakada}. This property of $w_n(E)$ is studied in \cite{mio} where it is proved that $w_n(E) \asymp (\log n)^2$, from which one could conjecture $\mu(A_{>n}) = O(\frac{\log n}n)$, and assumption (ii) would follow. Unfortunately we don't have a proof of this asymptotic behaviour for $\mu(A_{>n})$.

\subsection{Discussions on Theorem \ref{th-notelle1}} \label{subsec-notelle1}

As previously remarked, in the case of non-summable observables the pointwise asymptotic behaviour of the Birkhoff sums depends on some properties of the observable itself. We have been able to show that for $f\ge 0$ it holds $S_Nf(x) \sim G(N)$ for $\mu$-a.e.\ $x\in X$, where $G(n)$ is the asymptotic behaviour of the induced observable $f^E$ on the level sets $(A_n)$, if $f$ can be bounded by functions $g_1$ and $g_2$ with induced functions $g_i^E$ of the form $nL_i(n)$ for suitable normalised slowly varying functions (see Appendix \ref{sec:svf} for definitions and details on slowly varying functions). The assumptions on the functions $L_i(n)$ concern their asymptotic behaviour, as in assumptions (ii)-(b) and (c), and their link with the first return time function to the set $E$, as in assumptions (ii)-(d) and (e).

First we discuss assumptions (ii)-(b) and (c), showing sufficient conditions for $f$ and for a slowly varying function to satisfy them. Then we discuss assumptions (ii)-(d) and (e) in a specific example, and consider the necessity of (ii)-(e). Further we consider the existing literature on this problem.

\begin{proposition} \label{prop-normslow}
Let $f:X\to \R_{\ge 0}$ be constant on the level sets $(E_n)$ of the hitting time function \eqref{hitting-time} and assume that the induced function $f^E$ satisfies $f^E|_{A_n} = n\, L(n)$ for all $n\ge 0$, with $L$ slowly varying. Then there exist $g_1,g_2:X\to \R_{\ge 0}$ satisfying (ii)-(a) and (b) of Theorem \ref{th-notelle1}.
\end{proposition}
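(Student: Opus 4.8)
The plan is to translate the statement into a one-dimensional question about the partial sums of $f$ along level sets, and then invoke the structure theory of slowly varying functions collected in Appendix \ref{sec:svf}. Since $f$ is constant on the level sets $(E_n)$, write $f_k := f|_{E_k}$; by \eqref{fe-const} the induced observable is the partial sum $S_n := f^E|_{A_n} = \sum_{k=0}^{n-1} f_k = n\,L(n)$, which is non-decreasing because $f_k\ge 0$. Any candidate $g_i$ must also be constant on $(E_n)$, hence is determined by its values $g_{i,k}:=g_i|_{E_k}$, and its induced version is again a partial sum $\sum_{k=0}^{n-1} g_{i,k}$, which I require to equal $n\,L_i(n)$. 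Setting $S_n^{(i)}:=n\,L_i(n)$, conditions (ii)-(a) and (ii)-(b) become: find sequences with $S_0^{(i)}=0$ and $g_{i,k}=S_{k+1}^{(i)}-S_k^{(i)}$ such that $0\le g_{1,k}\le f_k\le g_{2,k}$ for every $k$, with $L_i$ normalised slowly varying and $L_1\sim L_2$. Since inducing preserves the order of non-negative functions, the pointwise bound already forces $L_1(n)\le L(n)\le L_2(n)$, so $L_1\sim L_2$ upgrades to $L_1\sim L\sim L_2$.

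Rewriting the increment inequalities in terms of partial sums, the task reduces to the following: given a non-decreasing $S_n=n\,L(n)$ with $L$ slowly varying, construct non-decreasing sequences $S_n^{(1)}\le S_n\le S_n^{(2)}$ such that the differences $S_n-S_n^{(1)}$ and $S_n^{(2)}-S_n$ are \emph{also} non-decreasing and $S_n^{(i)}/n$ is normalised slowly varying and asymptotic to $L$. First I would record the standard facts: by Karamata's representation one writes $L(x)=c(x)\exp\!\big(\int_a^x \epsilon(t)/t\,\mathrm{d}t\big)$ with $c(x)\to c>0$ and $\epsilon(t)\to 0$, the normalised part $\ell(x):=c\exp\!\big(\int_a^x \epsilon(t)/t\,\mathrm{d}t\big)$ is normalised slowly varying with $\ell\sim L$, and the Cesàro average $\tfrac1x\int_0^x L(t)\,\mathrm{d}t$ is again normalised slowly varying and asymptotic to $L$. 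Taking running suprema and infima of the oscillating factor $c(x)/c$ and smoothing produces, at the crude level, normalised slowly varying envelopes $L_1\le L\le L_2$ with $L_1\sim L_2\sim L$.

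The delicate and, I expect, principal obstacle is to reconcile the normalisation requirement with the pointwise bound, i.e.\ to realise the envelopes so that in addition $S_n-S_n^{(1)}$ and $S_n^{(2)}-S_n$ are non-decreasing. This is genuinely restrictive: if $L_2$ is normalised slowly varying then the increments of $S_n^{(2)}=n\,L_2(n)$ satisfy $S_{n+1}^{(2)}-S_n^{(2)}\sim L_2(n)\sim L(n)$, so the construction can succeed only if the one-step increments $f_k=(k+1)L(k+1)-k\,L(k)$ of $S_n$ are themselves trapped, up to the required monotonicity, between the increments of the two smooth envelopes. Hence the slowly varying hypothesis on $L$ must be exploited not merely for the large-scale partial-sum behaviour but precisely at the level of these increments. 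I expect this to require the uniform convergence theorem for slowly varying functions, used over multiplicative blocks to control $f_k$ in terms of $L(k)$, together with a careful choice of $S^{(i)}$ as a monotone smoothed majorant and minorant of $S$ on a geometric grid, the normalised property being checked through the criterion $x\,(S^{(i)})'(x)/S^{(i)}(x)\to 1$.

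Once the envelopes and the monotonicity of their differences are in place, the remainder is routine: I define $g_i$ on $E_k$ by $g_{i,k}=S_{k+1}^{(i)}-S_k^{(i)}$, which are non-negative and satisfy $g_{1,k}\le f_k\le g_{2,k}$ by the monotonicity of $S_n-S_n^{(1)}$ and $S_n^{(2)}-S_n$, giving (ii)-(a); their induced versions are by construction $n\,L_i(n)$ with $L_i$ normalised slowly varying and $L_1\sim L_2$, giving (ii)-(b). Thus the only substantive point is the simultaneous control of smoothness (normalisation) and of the individual increments described above, which is where the argument should concentrate.
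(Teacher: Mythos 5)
Your first, second, and fourth paragraphs reproduce, in more detail, exactly the paper's proof: the paper applies Lemma \ref{normslow} to $L$ to produce normalised slowly varying functions $L^-\le L\le L^+$ with $L^-\sim L\sim L^+$, sets $g_1^E|_{A_n}:=nL^{-}(n)$ and $g_2^E|_{A_n}:=nL^{+}(n)$, and recovers $g_1,g_2$ by un-inducing through \eqref{fe-const}, i.e.\ $g_{1,k}:=(k+1)L^-(k+1)-kL^-(k)$ and $g_{2,k}:=(k+1)L^+(k+1)-kL^+(k)$ on $E_k$. Your Karamata-envelope discussion is a re-derivation of Lemma \ref{normslow}, and your final paragraph is the same un-inducing step. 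What distinguishes your attempt is the third paragraph: you correctly observe that (ii)-(a) is a statement about \emph{increments}, namely $g_{1,k}\le f_k\le g_{2,k}$ for every $k$, equivalently that $n\mapsto n\left(L(n)-L^-(n)\right)$ and $n\mapsto n\left(L^+(n)-L(n)\right)$ be non-decreasing, and that this does not follow from the partial-sum sandwich $nL^-(n)\le nL(n)\le nL^+(n)$. You call this the principal obstacle and only sketch a hoped-for attack on it, so as a proof your proposal is incomplete at exactly this point.

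That point, however, is also where the paper's proof is silent: it verifies the sandwich only at the level of induced observables and tacitly assumes it descends to $g_1\le f\le g_2$, which un-inducing does not preserve. (With the construction of Lemma \ref{normslow} it genuinely fails: $L^+$ touches $L$ at the record points of the factor $c$ in \eqref{eq:karamata}, and if $L^+(\gamma)=L(\gamma)$ while $L^+(\gamma-1)>L(\gamma-1)$, then $g_{2,\gamma-1}<f_{\gamma-1}$.) Worse, your own remark that normalised slow variation forces $g_{2,k}\sim L_2(k)$ shows the obstacle is fatal rather than technical: indeed $(k+1)L_2(k+1)-kL_2(k)=\int_k^{k+1}\left(1+\eta(t)\right)L_2(t)\,\mathrm{d}t\sim L_2(k)$ by \eqref{eq:karamata-norm}. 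Since (ii)-(a) together with (ii)-(b) forces $L_1\le L\le L_2$ and hence $L_2\sim L$, any admissible pair must satisfy $g_{1,k}\sim g_{2,k}\sim L(k)$, so $f_k\sim L(k)$ is \emph{necessary}; but the hypothesis of the proposition only constrains the Ces\`aro averages $L(n)=\frac1n\sum_{k<n}f_k$. Concretely, take (say for the Farey map with $E=(\frac12,1)$, where every $E_n$ and $A_n$ has positive measure) $f_k=1$ for all $k$ except $f_{N_j-1}=1+N_j/j^2$ with $N_j=\lceil e^{j^2}\rceil$: then $L(n)\to 1$, so $L$ is slowly varying and the hypotheses of Proposition \ref{prop-normslow} hold, yet $f_{N_j-1}\to\infty$ while any admissible $g_2$ would have bounded increments, so no such pair exists. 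Thus the proposition with (ii)-(a) read pointwise --- which is how it is used in \eqref{final-gen} --- is false as stated; what your construction and the paper's actually prove is the variant in which (ii)-(a) is weakened to $g_1^E\le f^E\le g_2^E$, and the pointwise statement holds only under the extra hypothesis $f_k\sim L(k)$. This affects the reduction claimed in Proposition \ref{prop-normslow}, not Theorem \ref{th-notelle1} itself, whose hypotheses assume (ii) outright. In short: your diagnosis of where the difficulty sits is correct and sharper than the paper's treatment, but the missing step in your proposal cannot be filled --- only circumvented by modifying the statement.
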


\begin{proof}
Apply Lemma \ref{normslow} to $f^E$ to find $g_1^E$ and $g_2^E$. Then $g_1$ and $g_2$ are obtained using relation \eqref{fe-const} between a function which is constant on the level sets $(E_n)$ and its induced function.
\end{proof}

We can thus reduce our attention to the case of a function $f$ with induced function which is asymptotically equivalent to a function $G(n)=n\, L(n)$ with $L$ normalised slowly varying. Hence using normalised slowly varying functions does not reduce generality.

Let us now consider (ii)-(c). First notice that if $\xi(n)$ is bounded then assumption (ii)-(c) is easily obtained by the properties of slowly varying functions. Moreover, if $h(n)$ is a function for which $\xi(n) \le h(n)$ and assumption (ii)-(c) is satisfied for $h$, then it is immediately satisfied also for $\xi$.

Fixed a slowly varying function $h$, in Lemma \ref{remarkii} we give a sufficient condition on slowly varying functions $L$ to satisfy $L(n\, \tilde h(n)) \sim L(n)$ for all $\tilde h$ such that $c\le \tilde h(n) \le h(n)$. Notice that in Lemma \ref{remarkii} we assume that $h$ is bounded below from 1. If $h$ does not satisfy this assumption, simply apply the lemma to $k(n):=\min\{ 1+h_0, h(n)\}$ for some $h_0>0$, which is still slowly varying.

To study assumption (ii)-(c) using Lemma \ref{remarkii} for the function $\xi$ it is thus sufficient to show that it is slowly varying or that there exists a slowly varying function $h$ such that $\xi(n)\le h(n)$. It follows from computations in Proposition \ref{notelle1-class-t} that $\xi(n) \sim \log^2 (\log n)$, and therefore is slowly varying, for the class $\TTT$ of maps of the interval considered in Section \ref{subsec:interval-maps}. For the map studied in Section \ref{subsec-fibred} instead, it is shown in \cite{mio} that $\mu(A_{>n}) = O(\frac{\log^2 n}{n})$ and $\alpha(n) = O(n/\log^2 n)$, hence in Lemma \ref{lem-bound-wt} one finds $q(t) =O(t\, \log^2 t)$, and
\[
\xi(n) = O\Big( \log n \, \log^2 (\log\, n)\Big)\, .
\]
Thus we estimate $\xi$ with a slowly varying function and Lemma \ref{remarkii} gives a sufficient condition on the slowly varying function $L$ to satisfy assumption (ii)-(c). Hence if the measure of the super-level sets $(A_{>n})$ is bounded by a regular varying function with index -1, and the sequence $\alpha(n)$ defined in \eqref{alfa} is bounded by a regular varying function with index 1, then $\xi$ is bounded by a slowly varying function. In Remark \ref{wand-rate-slowly-var} we show that it is the case if \eqref{cond-fe} holds.

Finally we discuss the relation between the observable $f$ and the first return time function to $E$. In particular we show that assumptions (ii)-(d) and (e) simplify when the two limiting functions $g_1$ and $g_2$ have induced functions which increase asymptotically linearly. The proof of the following proposition is in Section \ref{app-minor}.

\begin{proposition}\label{notelle1-linear}
Let $T$, $(X,\BB,\mu)$ and $E$ be defined as in Theorem \ref{th-notelle1}. Let the sequence of random variables $(\varphi_{_E}\circ T_{_E}^{n-1})$ with $n\ge 1$ satisfy assumption (i) of Theorem \ref{th-notelle1}. Let $f:X\to \R_{\ge 0}$ be a non-negative measurable function such that $f\not\in L^1(X,\mu)$ and let $f^E$ be its induced version on $E$ defined in \eqref{induced-obs}. We assume that:
\begin{itemize}
\item[(c1)] there exist $g_1,g_2 : X \to \R_{\ge 0}$ constant on the level sets $(E_n)$ of the hitting time function to $E$ such that $g_1(x)\le f(x)\le g_2(x)$ for $\mu$-a.e.\ $x\in X$, and the induced functions satisfy $g_i^E(n) \sim n$ for $i=1,2$;

\item[(c2)] the super-level sets of $\varphi_{_E}$ satisfy
\[
\sum_{n\ge 1} \frac{n\, (\mu(A_{>n}))^2}{(\sum_{j=0}^{n-1} \mu(A_{>j}))^2} < \infty;
\]

\item[(c3)] if $\ell(n) := \sum_{k=0}^n\, \mu(A_{\ge k})$, then $\ell( N/\ell(N)) \sim \ell(N)$ as $N\to \infty$.
\end{itemize}
Then for $\mu$-a.e. $x\in X$ it holds
\[
\lim_{N\to \infty}\, \frac{1}{N}\, \sum_{k=1}^{N}\, (f\circ T^{k-1})(x) = 1\, .
\]
\end{proposition}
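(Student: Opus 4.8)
The plan is to deduce Proposition \ref{notelle1-linear} from Theorem \ref{th-notelle1}, by checking that assumptions (c1)--(c3) together with $g_i^E(n)\sim n$ force all the hypotheses (i)--(ii) of that theorem in the degenerate case where the slowly varying functions $L_i$ tend to the constant $1$. Assumption (i) is shared verbatim, and (ii)-(a) is exactly the sandwich in (c1). For (ii)-(b) I would first record that each $g_i^E$ is nondecreasing, being a partial sum of the nonnegative values $g_i|_{E_j}$ via \eqref{fe-const}, and that $g_i\not\in L^1(X,\mu)$: for $g_2$ this follows from $f\le g_2$ and $f\not\in L^1$, while for $g_1$ it follows from $\sum_n g_1^E(n)\,\mu(A_n)\sim\sum_n n\,\mu(A_n)=\infty$ by \eqref{somma-inf}. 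Writing $L_i(n):=g_i^E(n)/n\to 1$, these are slowly varying but possibly not normalised; I would therefore invoke Lemma \ref{normslow} (as in Proposition \ref{prop-normslow}) to replace $g_1,g_2$ by functions $\hat g_1\le g_1\le f\le g_2\le \hat g_2$, constant on the level sets $(E_n)$, whose induced versions equal $n\,\hat L_i(n)$ with $\hat L_i$ normalised slowly varying and $\hat L_i\sim L_i\to 1$; in particular $\hat L_1\sim\hat L_2$, giving (ii)-(b). From now on one works with these bounds, for which the $L_i$ are asymptotically $1$.

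With $L_i\to 1$, assumption (ii)-(c) holds trivially, since $L_i(n\,\tilde\xi(n))\sim 1\sim L_i(n)$. For (ii)-(d) I would use that $g_i^E(k)\sim k$ is nondecreasing and unbounded, so its generalised inverse satisfies $\Gamma_i(y)=\min\{k:g_i^E(k)>y\}\sim y$; hence $\mu(A_{\ge\Gamma_i(y)})\sim\mu(A_{\ge y})=\mu(A_{>y-1})$. A standard comparison between the integral in \eqref{cond-fe} and the corresponding series then turns \eqref{cond-fe} into $\sum_{n\ge 1} n\,(\mu(A_{>n}))^2/(\sum_{j=0}^{n-1}\mu(A_{>j}))^2<\infty$, which is precisely (c2). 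Thus (c2) yields (ii)-(d).

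For (ii)-(e) the same asymptotic $\Gamma_i(k)\sim k$ gives $\ell_i(n)=\sum_{k=0}^n\mu(A_{\ge\Gamma_i(k)})\sim\sum_{k=0}^n\mu(A_{\ge k})=\ell(n)$, with $\ell$ as in (c3). Using $\mu(A_{\ge k})=\mu(A_{>k-1})$ one checks $\ell(N)\sim\sum_{k=0}^N\mu(A_{>k})$ and, from \eqref{alfa}, $\alpha(N)=N/\sum_{j=0}^{N-1}\mu(A_{>j})\sim N/\ell(N)$. Consequently the left-hand side of \eqref{cond-uffa} is $\ell_i(\alpha(N))\sim\ell(N/\ell(N))$, while its right-hand side is $L_i(N)\sum_{k=0}^N\mu(A_{>k})\sim\ell(N)$; so \eqref{cond-uffa} is equivalent to $\ell(N/\ell(N))\sim\ell(N)$, which is exactly (c3). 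The same relation (c3) is the instance of Definition \ref{def-ssv} needed to declare $\ell_i$ super-slowly varying with rate function itself. This completes the verification of (ii)-(e), and hence of all the hypotheses of Theorem \ref{th-notelle1}.

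Applying Theorem \ref{th-notelle1} then produces $G:\N\to\R_{\ge 0}$ and a full-measure set $\tilde E$ with $f^E(x_n)\sim G(n)$ along any sequence $x_n\in A_n\cap\tilde E$, and with $S_Nf(x)/G(N)\to 1$ for $\mu$-a.e.\ $x$. To finish I would identify $G$: for $x\in A_n$ the bound $g_1^E(n)\le f^E(x)\le g_2^E(n)$ with $g_i^E(n)\sim n$ squeezes $f^E(x)\sim n$ uniformly on $A_n$, whence $G(N)\sim N$. Therefore $S_Nf(x)/N=(S_Nf(x)/G(N))\cdot(G(N)/N)\to 1$ for $\mu$-a.e.\ $x$. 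The only genuinely delicate points are the normalisation step, namely ensuring that the replacement $\hat g_i$ preserves the two-sided bound on $f$ while remaining constant on the hitting-time level sets, and the bookkeeping that converts the series condition (c2) and the super-slowly varying relation (c3) into the integral statements \eqref{cond-fe}--\eqref{cond-uffa}; once the asymptotic identifications $\Gamma_i(y)\sim y$, $\alpha(N)\sim N/\ell(N)$ and $\ell_i\sim\ell$ are in place, the reduction to Theorem \ref{th-notelle1} is automatic.
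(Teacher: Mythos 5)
Your overall strategy coincides with the paper's: check hypotheses (i) and (ii)-(a)--(e) of Theorem \ref{th-notelle1} with $G(N)\sim N$ and then apply that theorem. Your handling of (ii)-(a)--(d) is essentially the paper's as well (the paper treats (a)--(c) as immediate consequences of (c1), and gets (ii)-(d) from (c2) via $\Gamma_i(k)\sim k$ and the series/integral comparison of Lemma \ref{useful-2}). Two smaller points there deserve care. First, your claim $\mu(A_{\ge \Gamma_i(y)})\sim \mu(A_{\ge y})$ does not follow from $\Gamma_i(y)\sim y$ alone: a nonincreasing, non-summable tail sequence can drop by a constant factor over windows of size $o(y)$, so asymptotic equivalence of arguments does not give asymptotic equivalence of tails. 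What you actually need (and what suffices for the comparison with (c2)) are the two-sided monotonicity bounds $\mu(A_{\ge (1+\epsilon)y})\le \mu(A_{\ge \Gamma_i(y)})\le \mu(A_{\ge (1-\epsilon)y})$ for large $y$. Second, your normalisation detour through Lemma \ref{normslow} is shakier than it looks: replacing $g_1$ by $\hat g_1$ with $\hat g_1^E\le g_1^E$ does not yield the pointwise bound $\hat g_1\le f$ required in (ii)-(a), because for functions constant on the sets $(E_n)$ pointwise domination corresponds to domination of the \emph{increments} of the induced functions, not of their values. In the present situation the detour is also unnecessary: since $L_i(n)=g_i^E(n)/n\to 1$, every asymptotic property of $L_i$ invoked in the proof of Theorem \ref{th-notelle1} holds trivially, which is why the paper simply declares (a)--(c) clear.

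The genuine gap is in your verification of (ii)-(e). You assert that (c3) ``is the instance of Definition \ref{def-ssv} needed to declare $\ell_i$ super-slowly varying with rate function itself''. It is not: Definition \ref{def-ssv} requires $\ell\big(N\,\ell(N)^{\delta}\big)\sim \ell(N)$ \emph{uniformly} for $\delta\in[0,1]$, i.e.\ the argument is multiplied by a power of $\ell(N)$, whereas (c3) states $\ell(N/\ell(N))\sim\ell(N)$, i.e.\ division by $\ell(N)$ (formally $\delta=-1$). Moreover the instance actually used in the proof of Theorem \ref{th-notelle1} --- in Lemma \ref{lemma-step1}, to get the de Bruijn conjugate $\ell^{\#}\sim 1/\ell$ via \cite[Cor. 2.3.4]{reg-var-book} and hence $d(n)\sim n\,\ell(n)$ --- is the case $\delta=1$, namely $\ell(N\,\ell(N))\sim\ell(N)$. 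Passing from (c3) to this statement is exactly the nontrivial content of the paper's proof: using that $\ell$ is nondecreasing, diverging and $\ell\ge 1$, one has for every $\delta\in[0,1]$
\[
\ell(n)\;\le\;\ell\big(n\,\ell^{\delta}(n)\big)\;\sim\;\ell\left(\frac{n\,\ell^{\delta}(n)}{\ell\big(n\,\ell^{\delta}(n)\big)}\right)\;\le\;\ell\left(\frac{n\,\ell(n)}{\ell(n)}\right)\;=\;\ell(n)\,,
\]
where the middle equivalence is (c3) evaluated along $m=n\,\ell^{\delta}(n)$ and the last inequality uses $\ell^{\delta}(n)\le \ell\big(n\,\ell^{\delta}(n)\big)$; the squeeze then gives $\ell\big(n\,\ell^{\delta}(n)\big)\sim\ell(n)$. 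Without this argument (or an equivalent one) your reduction to Theorem \ref{th-notelle1} is incomplete, since hypothesis (ii)-(e) has simply not been established from (c3).
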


\begin{corollary} \label{notelle1-class-t}
Let us consider a map of the interval $T\in \TTT$ with only one indifferent fixed point at $0$ satisfying \eqref{class-t} with $p=1$ and $C=1$, and let $\mu$ be the infinite absolutely continuous $T$-invariant measure. If $f:X\to \R_{\ge 0}$ is constant on the level sets $(E_n)$ of the hitting time function \eqref{hitting-time} and the induced function $f^E$ satisfies $f^E|_{A_n} \sim n$ as $n\to \infty$ then
\begin{equation} \label{birk-av}
\lim_{N\to \infty}\, \frac 1N\, \sum_{n=1}^N\, (f\circ T^{n-1})(x) =1\, , \quad \text{for $\mu$-a.e.\ $x\in [0,1]$.}
\end{equation} 
\end{corollary}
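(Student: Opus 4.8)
The plan is to derive this as a special case of Proposition \ref{notelle1-linear}, applied to the inducing set $E=P_1$, since the conclusion of that proposition is exactly \eqref{birk-av}. Thus I would only need to verify assumption (i) of Theorem \ref{th-notelle1} together with (c1)--(c3), after which the corollary is immediate.

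For assumption (i) I would invoke the discussion of Section \ref{subsec:interval-maps}: the choice $E=P_1$ makes $T_{_E}$ an exact AFU-system covered by \cite[Thm 1]{aar-nak-mix}, so that $(\varphi_{_E}\circ T_{_E}^{n-1})$ is $\psi$-mixing with $\psi(n)\le K\theta^n$ for some $K>0$, $\theta\in(0,1)$, which gives $\sum_{n\ge 1}\psi(n)/n<\infty$. For (c1), since $f$ is constant on the level sets $(E_n)$, relation \eqref{fe-const} makes $f^E$ constant on each $A_n$ with $f^E|_{A_n}=n\,L(n)$ where $L(n)\to 1$ is slowly varying; applying Proposition \ref{prop-normslow} produces $g_1,g_2$ with $g_i^E(n)=n\,L_i(n)$, the $L_i$ normalised slowly varying and $L_1\sim L_2$. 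Since $g_1^E\le f^E\le g_2^E$ on each $A_n$ while $f^E(n)\sim n$, I would conclude $L_i(n)\to 1$, hence $g_i^E(n)\sim n$, which is exactly (c1).

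The remaining conditions rest on the super-level asymptotics. With a single indifferent fixed point ($\tilde\PPP=\{P_0\}$, $L_0\equiv 1$) and $p=1$, $C=1$, the Thaler-type estimate recalled in Section \ref{subsec:interval-maps}, together with $\mu(A_{>n})=\mu(E_n)$, yields $\mu(A_{>n})\sim 1/n$, so the wandering rate satisfies $\sum_{j=0}^{n-1}\mu(A_{>j})\sim\log n$. Condition (c2) then collapses to a convergent logarithmic series,
\[
\sum_{n\ge 2}\frac{n\,(\mu(A_{>n}))^2}{\big(\sum_{j=0}^{n-1}\mu(A_{>j})\big)^2}\asymp\sum_{n\ge 2}\frac{1}{n(\log n)^2}<\infty,
\]
and for (c3) I would use $\mu(A_{\ge k})=\mu(A_{>k-1})\sim 1/k$ to get $\ell(n)=\sum_{k=0}^n\mu(A_{\ge k})\sim\log n$, whence $\ell(N/\ell(N))\sim\log(N/\log N)=\log N-\log\log N\sim\log N\sim\ell(N)$, establishing (c3).

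I expect the main obstacle to be the super-level asymptotic $\mu(A_{>n})\sim 1/n$ (equivalently $\mu(E_n)\sim 1/n$), the quantitative heart of the ``barely infinite'' $p=1$ case and precisely the regime where trimming a single term suffices. Once this is in place, (c2) and (c3) are routine logarithmic computations and (c1) is handled by Proposition \ref{prop-normslow}, so that Proposition \ref{notelle1-linear} applies and delivers \eqref{birk-av}.
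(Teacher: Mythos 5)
Your proposal is correct and follows essentially the same route as the paper: the corollary is deduced as a special case of Proposition \ref{notelle1-linear}, with assumption (i) supplied by the discussion in Section \ref{subsec:interval-maps}, and (c1)--(c3) verified from $\mu(A_{>n})\sim 1/n$, hence $\ell(n)\sim \log n$. The only (harmless) difference is your detour through Proposition \ref{prop-normslow} for (c1): the paper simply takes $g_1=g_2=f$, which already satisfies (c1) because $f$ is constant on the sets $(E_n)$ and $f^E|_{A_n}\sim n$.
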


\begin{proof}
This is a special case of Proposition \ref{notelle1-linear}. Assumption (i) of Theorem \ref{th-notelle1} holds as discussed in Section \ref{subsec:interval-maps}, and (c1) follows by definition of the observable $f$. Finally for $T\in \TTT$ with $p=1$ and $C=1$ it holds $\mu(A_{>n}) \sim \frac 1n$. Thus $\ell(n) \sim \log n$ and (c2)-(c3) hold true.
\end{proof}

In order to not only considering the trivial case of $G(n)\sim n$, we give a more involved example generalising Corollary \ref{notelle1-class-t}.
\begin{example}
 Let $(X,\mathcal{B},\mu)$ and $E\in\mathcal{B}$ be a system fulfilling (i) of Theorem \ref{th-notelle1} with $\mu(A_{>n})\sim \log^b(\log n)/n$ as $n\to \infty$ for $b\in\R$. 
 Furthermore we assume $f:X\to \R_{\ge 0}$ to be constant on the level sets $(E_n)$ of the hitting time function \eqref{hitting-time} and let the induced function $f^E$ satisfy $f^E|_{A_n} \sim n\, \log^c(\log n)$ as $n\to \infty$ with $c\in\mathbb{R}$, then
\begin{equation} \label{birk-av1}
\lim_{N\to \infty}\, \frac {1}{N\, \log^c(\log N)}\, \sum_{n=1}^N\, (f\circ T^{n-1})(x) =1\, , \quad \text{for $\mu$-a.e.\ $x\in X$.}
\end{equation}
We will use Theorem \ref{th-notelle1} to verify this statement. Obviously, conditions (i), (ii)-(a), and (ii)-(b) are already fulfilled by assumption. 
To verify condition (ii)-(c) we notice that $q(n)\sim n\, \log^b(\log n)$ (using \cite[Cor.\ 2.3.4]{reg-var-book}). Furthermore $\alpha(n)\sim n/( \log n\, \log^{b}(\log n))$ implies
$\xi(n)\sim n\, \log^2(\log n)$ and $\log^c(\log (n\, \xi (n)))\sim \log^c(\log n)$ gives condition (ii)-(c). 

To verify condition (ii)-(d) we notice that $\Gamma(n)\sim n/ \log^c(\log n)$ (also by applying \cite[Cor.\ 2.3.4]{reg-var-book}) and so 
$\mu(A_{> \Gamma(n)})\sim \log^{b+c}(\log n)/n$.
Thus 
\begin{align*}
 \int_{0}^{\infty} \frac{y\, \left( \mu(A_{\ge \Gamma(y)}) \right)^2}{\left( \int_0^y\, \mu(A_{\ge \Gamma(t)}) \mathrm{d}t\right)^2} \mathrm{d}y
 &\sim \int_{0}^{\infty} \frac{\log^{2(b+c)}(\log y)}{y \left( \int_0^y\, \log^{b+c}(\log t)/t) \mathrm{d}t\right)^2} \mathrm{d}y
 \sim \int_{0}^{\infty} \frac{1}{y\, \log^2 y} \mathrm{d}y<\infty.
\end{align*}
Finally, we are left to verify condition (ii)-(e). We have $\ell(n)\sim \log n\, \log^{b+c}(\log n)$ which is super-slowly varying with rate function itself and $\ell(\alpha(n))\sim \ell(n)$.
On the other hand 
\[
L(n)\, \sum_{k=1}^n\mu(A_{>k})\sim \Big(\log^c(\log n)\Big)\, \Big(\log n\, \log^b(\log n)\Big)\sim \ell(n)
\]
giving all properties of Theorem \ref{th-notelle1} and \eqref{birk-av1} holds. 
\end{example}

In the last examples we have seen that not only the trivial case $G(n)\sim n$ and $\mu(A_{>n})\sim \frac{1}{n}$ is applicable on Theorem \ref{th-notelle1}. However, as we will see in the next proposition, condition (ii)-(e) can be quite restrictive and, as we will see, we will not be able to drop it. 

\begin{proposition}\label{notelle1-counterex}
 Let $T$, $(X,\BB,\mu)$ and $E$ be defined as in Theorem \ref{th-notelle1}. Let the sequence of random variables $(\varphi_{_E}\circ T_{_E}^{n-1})$ with $n\ge 1$ satisfy assumption (i) of Theorem \ref{th-notelle1}. Let $f:X\to \R_{\ge 0}$ be a non-negative measurable function such that $f\not\in L^1(X,\mu)$ and let $f^E$ be its induced version on $E$ defined in \eqref{induced-obs}. We assume that:
\begin{itemize}
\item[(w1)] there exist $g_1,g_2 : X \to \R_{\ge 0}$ constant on the level sets $(E_n)$ of the hitting time function to $E$ such that $g_1(x)\le f(x)\le g_2(x)$ for $\mu$-a.e.\ $x\in X$, and the induced functions satisfy $g_i^E(n) \sim n\,\log n$ for $i=1,2$;

\item[(w2)] the super-level sets of $\varphi_{_E}$ satisfy $\mu(A_{>n})\sim \frac{1}{n}$.
\end{itemize}
Then there exist $u_1<u_2$ such that for $\mu$-a.e. $x\in X$ it holds
\[
\limsup_{N\to \infty}\, \frac{1}{N\,\log N}\, \sum_{k=1}^{N}\, (f\circ T^{k-1})(x) >  u_2\quad\text{ and }\quad
\liminf_{N\to \infty}\, \frac{1}{N\,\log N}\, \sum_{k=1}^{N}\, (f\circ T^{k-1})(x) <  u_1\, .
\]
\end{proposition}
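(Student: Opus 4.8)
\emph{Reduction to the induced system.} The plan is to read off $S_Nf$ at return times as two \emph{coupled} heavy-tailed sums — one governing the elapsed time, one the value of the Birkhoff sum — both in the boundary regime where exactly one order statistic must be trimmed. First I would use (w1): since $g_1(x)\le f(x)\le g_2(x)$ with $g_i$ constant on the level sets $(E_n)$ and $g_i^E(n)\sim n\log n$, we have $S_Ng_1\le S_Nf\le S_Ng_2$, so it suffices to produce the upper fluctuation for $g_1$ (via $f\ge g_1$) and the lower one for $g_2$ (via $f\le g_2$), both induced observables being asymptotic to $n\log n$. Writing $\varphi_k:=(\varphi_{_E}\circ T_{_E}^{k-1})(x)$ and $Y_k:=(f^E\circ T_{_E}^{k-1})(x)$, the relation \eqref{f-vs-fe} gives, at the return times $N=\tau_{_{E,x}}(R)$ with $R=R_{_{E,N}}(x)$, the exact identities $S_Nf(x)=\sum_{k=1}^{R}Y_k$ and $N=\sum_{k=1}^{R}\varphi_k$, with $Y_k\sim\varphi_k\log\varphi_k$. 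Since $\limsup_N S_Nf/(N\log N)$ and $\liminf_N S_Nf/(N\log N)$ are $T$-invariant, they are $\mu$-a.s.\ constant, so I only need one subsequence along which the ratio stays below some $u_1$ and another along which it exceeds some $u_2$.

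\emph{Bulk value and the liminf.} I would apply Lemma \ref{lemma-aar-nakada} separately to the $\psi$-mixing sequences $(\varphi_{_E}\circ T_{_E}^{k-1})$ and $(f^E\circ T_{_E}^{k-1})$. By (w2) the tail of $\varphi_{_E}$ is $\mu(A_{>n})\sim 1/n$, while that of $f^E$ is $\mu(\{f^E>y\})=\mu(A_{\ge\Gamma(y)})\sim \log y/y$ with $\Gamma(y)\sim y/\log y$; evaluating \eqref{eq: cond AN} shows the $r=0$ integral diverges and the $r=1$ integral converges for both, so $W=1$ in each case. Hence the once-trimmed sums satisfy $S_R^1(\varphi_{_E})\sim d_\varphi(R)\sim R\log R$ and $S_R^1(f^E)\sim d_{f^E}(R)\sim \tfrac12 R(\log R)^2$. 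Crucially the two largest terms are attained at the \emph{same} excursion, and at the typical indices $R$ this common maximum is of lower order than the trimmed sums (with \eqref{eq: M/d_n} controlling the second maximum), so there the untrimmed sums agree with the trimmed ones and $\tau_{_{E,x}}(R)\sim R\log R$, $S_{\tau(R)}f\sim \tfrac12 R(\log R)^2$ hold simultaneously, whence the ratio tends to $\tfrac12$. This gives $\liminf_N S_Nf/(N\log N)\le \tfrac12$.

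\emph{Records and the limsup.} For the upper fluctuation I would show exceptionally long single excursions occur infinitely often. For fixed $c>0$ the events $\{\varphi_k\ge c\,k\log k\}$ have probabilities $\mu(A_{>ck\log k})\asymp 1/(ck\log k)$, whose sum diverges; a divergence Borel--Cantelli valid under $\psi$-mixing with $\sum_n\psi(n)/n<\infty$ then forces them for infinitely many $k$, $\mu$-a.s. Completing such an excursion at $N=\tau_{_{E,x}}(k)=\tau_{_{E,x}}(k-1)+\varphi_k$ makes the single term $\varphi_k$ a definite fraction of $N$ and the single term $Y_k\sim\varphi_k\log\varphi_k$ a definite fraction of $S_Nf$; comparing against the bulk $\tfrac12 k(\log k)^2$ of the preceding terms yields a ratio at least $\tfrac{1/2+c}{1+c}$. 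Intersecting over $c\in\N$ gives $\limsup_N S_Nf/(N\log N)\ge 1$, while the log-sum inequality $\sum_k\varphi_k\log\varphi_k\le\big(\sum_k\varphi_k\big)\log\big(\sum_k\varphi_k\big)$ gives the reverse, so the limsup is $1$.

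\emph{Non-return times and conclusion.} Finally I would verify that evaluating $S_Nf$ inside an excursion creates no new extremes: during an excursion out of $E$ the observable accumulates at rate $\asymp\log\varphi_{_E}$ per step (this is where the structure $f_k\sim\log k$ enters), comparable to the growth rate $\asymp\log N$ of $N\log N$, so the ratio stays between the bulk value and the post-record peak, returning to $\tfrac12$ between records and reaching its near-$1$ peaks upon completing a record. Choosing e.g.\ $u_1=3/5$ and $u_2=9/10$ then gives the claim. The main obstacle is the limsup step: transferring the divergence Borel--Cantelli to the $\psi$-mixing (rather than independent) setting while simultaneously controlling the residual sum $S_{\tau(k-1)}f$ so that one oversized excursion genuinely dominates both time and value. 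This is exactly the point where the omitted hypothesis (ii)-(e) of Theorem \ref{th-notelle1} fails to help — here it would require the constant in $\ell(\alpha(N))\sim L(N)\sum_k\mu(A_{>k})$ to match, whereas the computation produces the spurious factor $\tfrac12$ that is precisely the gap between liminf and limsup.
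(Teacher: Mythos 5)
Your proposal is correct in substance and rests on the same mechanism as the paper's proof --- the competition between the trimmed bulk, worth $\tfrac12(N-w)\log(N-w)$, and the single largest excursion, worth $w\log w$; your computations $a(y)\sim 2y/\log^2 y$, $d(n)\sim\tfrac12 n\log^2 n$ and the resulting values $\tfrac12$ and $1$ are exactly the paper's numbers --- but you establish the fluctuation by a genuinely different route. The paper works only along the return times $U_j=\tau_{_{E,x}}(j)$, writes
\[
S_{U_j}f(x) \sim \tfrac12\,(U_j-w_j)\log(U_j-w_j) + w_j\log w_j,\qquad w_j:=w(U_j,E,x),
\]
and then shows that $w_j/U_j$ cannot converge a.s.\ by a soft contradiction: if it did, Theorem \ref{th-elle1-bis} together with the regular variation of $\alpha$ would produce a deterministic normalizing sequence working for all $f\in L^1(X,\mu)$, contradicting \cite[Thm. 2.4.2]{aa-book}. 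This yields the existence of $u_1<u_2$ with no Borel--Cantelli argument and without identifying any explicit values. You instead build the two subsequences by hand: the liminf from the fact that the maximal excursion is $o(R\log R)$ in probability (so along a deterministic subsequence it is a.s.\ negligible and the ratio tends to $\tfrac12$), and the limsup from a divergence Borel--Cantelli applied to the events $\{\varphi_k\ge ck\log k\}$, giving the sharper conclusions $\liminf\le\tfrac12$ and $\limsup=1$. What your route buys is quantitative information; what it costs are two ingredients the paper never needs and which you assert rather than prove: (a) a second Borel--Cantelli lemma under $\psi$-mixing --- this does hold, e.g.\ via Kochen--Stone, since $\psi$-mixing gives $\PP(B_j\cap B_k)\le(1+\psi(k-j))\,\PP(B_j)\,\PP(B_k)$ for single-coordinate events and $\psi(n)\to 0$, but it needs to be stated and proved --- and (b) the subsequence extraction hidden in ``at the typical indices $R$'', i.e.\ passing from convergence in probability of $M_R/(R\log R)$ to a.s.\ convergence along a deterministic subsequence, used jointly with the a.s.\ trimmed asymptotics; at the record times you should also invoke a first Borel--Cantelli bound such as $\varphi_i\le i^{1+\epsilon}$ eventually a.s., so that all logarithms are comparable to $\log k$. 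Both ingredients are standard, so I regard your argument as complete in outline. Two of your side remarks are worth keeping: the observation that $\limsup_N S_Nf/(N\log N)$ and $\liminf_N S_Nf/(N\log N)$ are $T$-invariant, hence a.s.\ constant, cleanly justifies that $u_1,u_2$ can be chosen uniformly in $x$ (a point the paper glosses over), and your diagnosis that assumption (ii)-(e) fails here precisely through the factor $\tfrac12$ mismatch in \eqref{cond-uffa} is correct.
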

We will first assure ourselves that Proposition \ref{notelle1-counterex} fulfils all assumptions of Theorem \ref{th-notelle1} with $G(n)\sim n\,\log n$ except for (ii)-(e). It is clear that assumptions (ii)-(a) and (b) are satisfied by (w1). To verify (ii)-(c), we note that (w2) implies $\alpha(n)\sim n/\log n$ from which we obtain $\xi(n)\sim \log^2 (\log n)$
and $\log(n\,\xi(n))\sim \log n$ verifying (ii)-(c). To verify (ii)-(d) we first notice that $\Gamma(n)\sim n/\log n$ is an asymptotic inverse of $G(n)$ which we either get by direct calculation or by using e.g.\ \cite[Cor.~2.3.4]{bgl}. This implies that $\mu(A_{>\Gamma(n)})\sim \log n/n$ and by an easy calculation we obtain that \eqref{cond-fe} is fulfilled. The proof of Proposition \ref{notelle1-counterex} is in Section \ref{app-minor}.

\vskip0.2cm
Finally, we discuss the existing literature. The pointwise convergence of Birkhoff averages for non-summable observables and infinite mea\-sure-preserving dynamical systems has been recently discussed in \cite{lenci}. The authors of \cite{lenci} have proved that \eqref{birk-av} holds under strong assumptions on the observable $f$ and very weak assumptions on the system. Basically they considered $L^\infty$ observables for which it is possible to control in a very strong way the contributions to the Birkhoff sums from the excursions out of a finite measure set $E$, the set on which we have considered the induced map. In this paper we have less stringent assumptions on the observables but have considered more particular dynamical systems, those for which an induced map exists which is $\psi$-mixing. We finish this section by discussing the relations of our results with those in \cite{lenci} for the Farey map, a system in the class $\TTT$. 

\begin{remark} \label{global-obs}
Let us consider the particular case of the Farey map $F:[0,1]\to [0,1]$ which belongs to the class $\TTT$ considered in Section \ref{subsec:interval-maps} and is defined as
\[
F(x) = \left\{ \begin{array}{ll} \frac{x}{1-x}\, , & \text{if $x\in \left[0,\frac 12\right]$,}\\[0.2cm] \frac{1-x}{x}\, , & \text{if $x\in \left[\frac 12,1\right]$.}
\end{array} \right.
\]
It is well known that the absolutely continuous invariant measure is $d\mu(x) = \frac 1{x \log 2} \, dx$. If $E=(\frac 12,1)$, for which $\mu(E)=1$, it holds $E_n= (\frac{1}{n+1},\frac 1n)$. Thus $\mu(E_n) = \mu(A_{>n}) = \log_2(1+\frac 1n)\sim \frac{1}{n\log 2}$. 

A particular class of observables studied in the context of infinite mixing theory is that of \emph{global observables}. For maps in $\TTT$, we recall that a global observable is a function $f\in L^\infty([0,1],\mu)$ such that the limit
\[
\lim_{a\to 0^+}\, \frac{1}{\mu(a,1)}\, \int_a^1\, f\, d\mu
\]
exists and is finite (see \cite{bgl,bl} for more details).
We first show that observables $f\in L^\infty(X,\mu)$ which are constant on the level sets $(E_n)$ with $f_n:=f|_{E_n}$ and for which the induced function satisfies $f^E|_{A_n} \sim n$ as $n\to \infty$, are global observables. The definition of global observable for the Farey map and for observables constant on the sets $E_n$ reduces to show that the limit
\begin{equation} \label{it-is-go}
\lim_{n\to \infty}\, \frac{1}{\mu\left(\frac{1}{k+1},1\right)}\, \int_{\frac{1}{k+1}}^1\, f(x)\, d\mu = \lim_{n\to \infty}\, \frac{1}{\sum_{k=1}^n\, \log_2(1+\frac 1k)}\, \sum_{k=1}^n\, f_k\, \log_2\left(1+\frac 1k\right)
\end{equation}
exists and is finite. To prove this, it is enough to recall that $\sum_{k=1}^n\, \log_2(1+\frac 1k)\sim \log_2 n$ and to apply Abel's summation formula to get
\[
\sum_{k=1}^n\, f_k\, \log_2\left(1+\frac 1k\right) \sim \left( \sum_{k=1}^n\, f_k \right)\,  \log_2\left(1+\frac 1n\right) + \frac{1}{\log 2} \int_1^n\, \left( \sum_{k=1}^t\, f_k \right)\, \frac{1}{t^2+t}\, dt \sim
\]
\[
\sim f^E|_{A_n}\, \log_2\left(1+\frac 1n\right) + \int_1^n\,  \frac{f^E|_{A_t}}{\log 2}\, \frac{1}{t^2+t}\, dt \sim n\, \log_2\left(1+\frac 1n\right) +\frac{1}{\log 2} \int_1^n\,  \frac{t}{t^2+t}\, dt \sim \log_2(n+1)\, .
\]
On the other hand let's consider a global observable $f:X\to \R_{\ge 0}$ which is constant on the level sets $(E_n)$ with $f_n:=f|_{E_n}$. By rescaling we can assume that the limit in \eqref{it-is-go} exists and is equal to 1. In general it is not possible to conclude about the asymptotic behaviour of $f^E|_{A_n}$, however some sufficient conditions can be obtained by the following argument (see \cite{moricz1}). With $\ell_n := \sum_{k=1}^n\, \log_2(1+\frac 1k)$ let
\[
\tau_n := \frac{1}{\ell_n}\, \sum_{k=1}^n\, f_k\, \log_2\left(1+\frac 1k\right)\, .
\]
Then $f_1=\ell_1\tau_1$ and
\[
f_n = \frac{1}{\log_2\left(1+\frac 1n\right)}\, (\ell_n \tau_n - \ell_{n-1} \tau_{n-1})\, , \quad \text{for $n\ge 2$,}
\]
from which
\[
f^E|_{A_n} = \sum_{k=1}^n\, f_k = \frac{\ell_n \tau_n}{\log_2\left(1+\frac 1n\right)} - \sum_{k=1}^{n-1}\, \ell_{k}\tau_{k} \left(\frac{1}{\log_2\left(1+\frac{1}{k+1}\right)} - \frac{1}{\log_2\left(1+\frac 1k\right)}\right)\, .
\]
Since $\ell_n \sim \log_2 n$, it follows for example that if $\tau_n \sim 1 + o((\log_2 n)^{-1})$ then $f^E|_{A_n} \sim n$. Hence we can check if Proposition \ref{notelle1-class-t} is applicable to a given global observable.

Let us now compare the conditions on $f$ in Proposition \ref{notelle1-class-t} with the statement in \cite[Theorem 2.5]{lenci}. In our setting and with proper norming, the assumptions in \cite[Theorem 2.5]{lenci} can be stated as follows: $f\in L^{\infty}(X, \mu)$ and for all $\epsilon>0$ there exist $N, K\in\N$ such that for all $x\in \bigcup_{k\geq K} E_k$ we have that
\begin{align}
 \left|\frac{\sum_{j=1}^{N} \left(f\circ T^{j-1}\right)(x)}{N}-1\right|<\epsilon.\label{eq: cond LM}
\end{align}

First we show that if the assumptions for an observable $f$ in \cite[Theorem 2.5]{lenci} are satisfied, then we obtain a good asymptotic behaviour for the induced observable $f^E$, so that we are basically in a good situation to apply Theorem \ref{th-notelle1}.

\begin{lemma} \label{cfr-lenci}
 Let $f:[0,1]\to \R_{\ge 0}$ be in $L^\infty(X,\mu)$ and such that for all $\epsilon>0$ there exist $N, K\in\N$ such that for all $x\in \bigcup_{k\geq K} E_k$ we have that \eqref{eq: cond LM} holds. Then $f^E|_{A_k}\sim k$ for $k\to \infty$. 
\end{lemma}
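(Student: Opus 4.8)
The plan is to translate the hypothesis \eqref{eq: cond LM}, which controls a fixed-length Birkhoff average along the tail $\bigcup_{k\ge K} E_k$, into a statement about the full length of an excursion, and then read off the induced observable. First I record the structure of excursions. If $x\in A_k$, that is $\varphi_{_E}(x)=k$, then $y:=T(x)$ satisfies $h_{_E}(y)=k-1$, so $y\in E_{k-1}$, and the orbit $y,Ty,\dots,T^{k-2}y$ descends through the level sets $E_{k-1},E_{k-2},\dots,E_1$ before returning to $E$ at time $k-1$, since each iterate lowers the hitting time by one. Splitting off the first summand in \eqref{induced-obs} therefore gives the identity
\[
f^E(x)=f(x)+S_{k-1}f(T(x)),\qquad T(x)\in E_{k-1}.
\]
Since $f\in L^\infty(X,\mu)$ the term $f(x)$ is bounded, so it suffices to prove the uniform asymptotics $S_M f(y)\sim M$ as $M\to\infty$, uniformly over $y\in E_M$, and then specialise to $M=k-1$.

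For this key step I would use a blocking argument. Fix $\epsilon>0$ and let $N,K$ be the integers provided by \eqref{eq: cond LM}. For $y\in E_M$ one has $T^{t}(y)\in E_{M-t}$ for every $0\le t\le M-1$. Partition the times $0,1,\dots,M-1$ into consecutive blocks of length $N$; the block starting at time $iN$ has initial point $T^{iN}(y)\in E_{M-iN}$, which lies in the tail $\bigcup_{k\ge K}E_k$ exactly when $M-iN\ge K$. Choosing $P:=\lfloor (M-K)/N\rfloor$ blocks, every such initial point is a tail point, so \eqref{eq: cond LM} gives $N(1-\epsilon)\le S_N f(T^{iN}(y))\le N(1+\epsilon)$ for each $i=0,\dots,P-1$. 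Summing these $P$ blocks and bounding the remaining at most $N+K$ terms by $\norm{f}_\infty$ each yields
\[
PN(1-\epsilon)\ \le\ S_M f(y)\ \le\ PN(1+\epsilon)+(N+K)\,\norm{f}_\infty .
\]
As $PN/M\to 1$ when $M\to\infty$, dividing by $M$ traps both $\liminf$ and $\limsup$ of $S_M f(y)/M$ between $1-\epsilon$ and $1+\epsilon$, uniformly in $y\in E_M$; since $\epsilon$ was arbitrary, $S_M f(y)/M\to 1$ uniformly over $y\in E_M$.

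Combining the two steps with $y=T(x)\in E_{k-1}$ and $M=k-1$,
\[
\frac{f^E(x)}{k}=\frac{f(x)}{k}+\frac{k-1}{k}\cdot\frac{S_{k-1}f(T(x))}{k-1}\longrightarrow 0+1\cdot 1=1
\]
uniformly over $x\in A_k$, which is precisely the claim $f^E|_{A_k}\sim k$. I expect the second step to be the crux: the hypothesis controls only sums of one fixed length $N$, so one must bootstrap to the excursion length $M$ by concatenating $\sim M/N$ applications of \eqref{eq: cond LM} along a single excursion. This is possible precisely because each excursion meets the level sets $E_1,\dots,E_M$ once and in decreasing order, so that all but a bounded remainder of the blocks begin at genuine tail points; the delicate points are the bookkeeping of which blocks remain in the tail and the uniformity of the resulting limit over the whole level set.
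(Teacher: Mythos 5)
Your proposal is correct and follows essentially the same blocking argument as the paper: decompose the excursion into consecutive blocks of length $N$ whose starting points lie in $\bigcup_{k\geq K} E_k$, apply \eqref{eq: cond LM} to each block, and absorb the remaining $O(N+K)$ terms using $f\in L^\infty(X,\mu)$ and $f\ge 0$. Your preliminary shift to $y=T(x)\in E_{k-1}$ before blocking is a minor (in fact slightly more careful) variant of the paper's bookkeeping, which blocks directly from $x\in A_R$; the decomposition, the estimates $PN(1-\epsilon)\le S_Mf(y)\le PN(1+\epsilon)+(N+K)\norm{f}_\infty$, and the conclusion are the same.
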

\begin{proof}
Let $\epsilon>0$ be given and consider $N,K$ fixed as in \eqref{eq: cond LM}. We set $f_N(x)\coloneqq \sum_{j=1}^{N}(f\circ T^{j-1})(x)$. For $x\in A_R$ with $R>K$, writing $R=K+r N + s$ with $r\in \N_0$ and $0\le s \le N-1$, we get
\[
f^E(x) = \sum_{j=1}^{R+1}\, (f\circ T^{j-1})(x) = \sum_{i=1}^{r}\, f_N (T^{N(i-1)}(x)) + \sum_{j=rN+1}^{R+1}\, (f\circ T^{j-1})(x)\, .
\]
Since $T^{N(i-1)}(x) \in \bigcup_{k\geq K} E_k$ for all $i=1,\dots,r$, we apply \eqref{eq: cond LM}, and the fact that $f\in L^\infty(X,\mu)$ and $f\ge 0$, to obtain
\[
rN\, (1-\epsilon) \le f^E(x) \le rN\, (1+\epsilon) + (N+K)\, \| f\|_\infty\, , \quad \forall\, x\in A_R\, .
\]
Since $R\sim rN\to \infty$ as $R\to \infty$, the lemma is proved.
\end{proof}

Finally we give two examples to which Theorem \ref{th-notelle1} is applicable, but which fail to fulfil the conditions in \cite[Theorem 2.5]{lenci}. 
The first is an example with $f\notin L^{\infty}(X,\mu)$, the second one with $f\in L^\infty(X,\mu)$ but not satisfying \eqref{eq: cond LM}. 

\begin{example} \label{ex1}
Let us consider a strictly monotonic sequence of natural numbers $\gamma_n$ defined by $\gamma_1=4$ and $\gamma_{n+1}=\gamma_n+\left\lfloor \gamma_n^{1/2}\right\rfloor$. Then we set $f$ to be constant on the intervals $E_k$ with 
 \begin{align*}
  f(x)=\begin{cases}\gamma_n-\gamma_{n-1}\, ,& \text{if $x\in E_{\gamma_n}$ for some $n\in\N$;}\\[0.2cm]
        0\, ,&\text{else}. 
       \end{cases}
 \end{align*}
 Since $\gamma_n-\gamma_{n-1}$ tends to infinity, $f\notin L^{\infty}(X,\mu)$. On the other hand the induced function satisfies $f^E|_{A_k}= \gamma_n$ for $\gamma_n \le k< \gamma_{n+1}$, so that for all $k\in \N$ and all $x\in A_k$ we have $k+1-\sqrt{k+1}\leq  f^E(x)\leq k$, implying that $f^E|_{A_k}\sim k$ as $k\to \infty$. 
\end{example}

\begin{example} \label{ex2}
First we define a strictly monotonic sequence of even numbers $\kappa_n$ by setting $\kappa_1=4$ and $\kappa_{n+1}=\kappa_n+2\left\lfloor \kappa_n^{1/2}/2\right\rfloor$.
  Furthermore, we set 
  \begin{align*}
   U\coloneqq \left\{\kappa_n,\kappa_n+1,\ldots, \kappa_n+1/2\left(\kappa_{n+1}-\kappa_n\right),\,
    n\in\mathbb{N}\right\}.
  \end{align*}
  We define 
  \begin{align*}
  f(x)=\begin{cases}2\, , &\text{if  $x\in E_{k}$ and $k\in U$;}\\[0.2cm]
        0\, ,&\text{else}. 
       \end{cases}
 \end{align*}
 Then $0\le f(x)\leq 2$ and thus $f\in L^{\infty}(X, \mu)$. Furthermore, if $x\in A_k$ then $k-2\sqrt{k}\leq  f^E(x)\leq k+2\sqrt{k}$ implying that $f^E|_{A_k}\sim k$ as $k\to \infty$.
 
 On the other hand, for each $K\in \mathbb{N}$ there is a positive measure set $A\subset \cup_{k\ge K}E_k$ such that for all $x\in A$ we have that $f(x)=\ldots =\left(f\circ T^K\right)(x)=0$ implying that \eqref{eq: cond LM} cannot be fulfilled.
\end{example}
\end{remark}

\section{Proofs of the main results} \label{proofs}

\subsection{Proof of Theorem \ref{th-elle1}} We prove the theorem for $\mu$-a.e.\ $x\in E$. The result for $\mu$-a.e.\ $x\in X$ follows since $T$ is conservative and ergodic. Let $f\in L^1(X,\mu)$ and let $R_{_{E,N,m}}(x) := R_{_{E,N+m(N,E,x)}}(x)$ denote the number of visits to $E$ in the first $N+m(N,E,x)$ steps of the orbit of $x$. Thus for each $N$ we add a number of steps depending on $x$ and being non-decreasing in $N$.

\begin{lemma}\label{lem-hopf}
For $\mu$-a.e.\ $x\in E$ it holds
\[
\lim_{N\to \infty}\, \frac{1}{R_{_{E,N,m}}(x)}\,  \sum_{k=1}^{N+m(N,E,x)}\, (f\circ T^{k-1})(x) = \int_X\, f\, \rd\mu\, .
\]
\end{lemma}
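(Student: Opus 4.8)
The plan is to recognise the claimed limit as a direct instance of Hopf's Ratio Ergodic Theorem \cite[Thm 2.2.5]{aa-book}, reading the denominator $R_{_{E,N,m}}(x)$ as (essentially) the Birkhoff sum of the indicator $\UU_{_E}$. Writing $M=M(N,x):=N+m(N,E,x)$, the quantity to control is exactly $S_M f(x)/R_{_{E,M}}(x)$, and the key observation is that $R_{_{E,M}}(x)$ and $S_M\UU_{_E}(x)$ differ only by the single boundary term $\UU_{_E}(T^M x)\in\{0,1\}$, by comparing \eqref{numb-visits} with the definition of $S_M\UU_{_E}$.

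First I would apply Hopf's theorem to the pair $f\in L^1(X,\mu)$ and $g=\UU_{_E}\in L^1(X,\mu)$, for which $\int_X\UU_{_E}\,\rd\mu=\mu(E)=1>0$. This gives, for $\mu$-a.e.\ $x\in E$ and along the full integer sequence,
\[
\lim_{M\to\infty}\frac{S_M f(x)}{S_M\UU_{_E}(x)}=\frac{\int_X f\,\rd\mu}{\mu(E)}=\int_X f\,\rd\mu.
\]
Next I would factor
\[
\frac{S_M f(x)}{R_{_{E,M}}(x)}=\frac{S_M f(x)}{S_M\UU_{_E}(x)}\cdot\frac{S_M\UU_{_E}(x)}{R_{_{E,M}}(x)},
\]
and note that, since $T$ is conservative and ergodic, the orbit of $\mu$-a.e.\ $x$ visits $E$ infinitely often, so $S_M\UU_{_E}(x)\to\infty$; together with $|R_{_{E,M}}(x)-S_M\UU_{_E}(x)|\le 1$ this forces the second factor to tend to $1$. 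Hence $S_M f(x)/R_{_{E,M}}(x)\to\int_X f\,\rd\mu$ as $M\to\infty$.

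The one point requiring care — and the main (mild) obstacle — is that $M=N+m(N,E,x)$ is a random time depending on $x$, so a priori one cannot simply plug it into a limit theorem. The resolution is that Hopf's theorem is a pointwise statement over all integers: for $\mu$-a.e.\ fixed $x$ the whole sequence $S_M f(x)/R_{_{E,M}}(x)$ converges, and any integer-valued sequence $M(N,x)\to\infty$ therefore yields the same limit. Since $m(N,E,x)\ge 1$ gives $M(N,x)\ge N+1\to\infty$, the $x$-measurable sequence $(M(N,x))_N$ is legitimate and inherits the limit $\int_X f\,\rd\mu$, which is precisely the assertion. One could equally argue via Birkhoff's theorem for the ergodic probability-preserving induced system $(E,T_{_E},\mu)$, squeezing $S_M f(x)$ between $\sum_{n=1}^{R_{_{E,M}}(x)-1}(f^E\circ T_{_E}^{n-1})(x)$ and $\sum_{n=1}^{R_{_{E,M}}(x)}(f^E\circ T_{_E}^{n-1})(x)$ after splitting $f=f^+-f^-$ and using $\int_E f^E\,\rd\mu=\int_X f\,\rd\mu$; normalising by $R_{_{E,N,m}}(x)$ makes the single-excursion discrepancy negligible. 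Notably, the particular structure of $m(N,E,x)$ plays no role in this lemma — it enters only later, when $R_{_{E,N,m}}(x)$ is compared with $\alpha(N)$ via the trimming and $\psi$-mixing hypotheses.
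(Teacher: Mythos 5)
Your proposal is correct and follows essentially the same route as the paper: both apply Hopf's Ratio Ergodic Theorem to the pair $f$ and $\UU_{_E}$, absorb the single boundary term by which $R_{_{E,N,m}}(x)$ differs from the Birkhoff sum of $\UU_{_E}$ using the divergence of that sum, and evaluate along the times $N+m(N,E,x)$. Your explicit justification of why the $x$-dependent time sequence is harmless (pointwise convergence of the full sequence for a.e.\ fixed $x$, plus $N+m(N,E,x)\to\infty$) is a point the paper leaves implicit, but it is the same argument.
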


\begin{proof}
We apply Hopf's Ratio Ergodic Theorem to $f(x)$ and $\UU_{_E}(x)$ and obtain for $\mu$-a.e.\ $x\in E$
\[
\lim_{N\to \infty}\, \frac{\sum_{k=1}^{N+m(N,E,x)}\, (f\circ T^{k-1})(x)}{R_{_{E,N,m}}(x)} =
\]
\[
= \lim_{N\to \infty}\, \frac{\sum_{k=1}^{N+m(N,E,x)}\, (f\circ T^{k-1})(x)}{(\UU_{_E}\circ T^{N+m(N,E,x)})(x)+\sum_{k=1}^{N+m(N,E,x)}\, (\UU_{_E}\circ T^{k-1})(x)} =
\]
\[
= \frac{\int_X\, f\, d\mu}{\int_X\, \UU_{_E}\, d\mu} = \int_X\, f\, \rd\mu
\]
where we have used that $\sum_{k=1}^{N+m(N,E,x)}\, (\UU_{_E}\circ T^{k-1})(x)$ is divergent.
\end{proof}

Let us now recall for $\mu$-a.e.\ $x\in E$ the definition of the time $\tau_{_{E,x}}(N)$ of the $N$-th return to $E$ given in \eqref{returns}. Then as in \eqref{trimmed-birk-sum}, we consider its trimmed version
\begin{equation} \label{trimmed-returns}
\tau^1_{_{E,x}}(N):= \tau_{_{E,x}}(N) - \max_{1\le k\le N}\, (\varphi_{_E} \circ T_{_E}^{k-1}) (x)
\end{equation} 
where we are trimming $\tau_{_{E,x}}$ by deleting the largest entry.

\begin{lemma}\label{useful-1}
For $\mu$-a.e.\ $x\in E$ it eventually holds
\[
N < \tau^1_{_{E,x}}(R_{_{E,N,m}}(x)) \le N + M^2_{_{N,m}}(x)
\]
where $M^2_{_{N,m}}(x)$ denotes the 2-nd maximum in $\{ (\varphi_{_E} \circ T_{_E}^{k-1}) (x)\, :\, k=1,\dots, R_{_{E,N,m}}(x)\}$ which in case of equality might coincide with the maximum. 
\end{lemma}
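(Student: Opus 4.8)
The plan is to reduce everything to the two defining inequalities for $R := R_{_{E,N,m}}(x)$ together with the identity $m(N,E,x)=\max_{1\le k\le R_0}\varphi_k$ from \eqref{relations}, where I abbreviate $\varphi_k := (\varphi_{_E}\circ T_{_E}^{k-1})(x)$, $R_0 := R_{_{E,N}}(x)$ and $m := m(N,E,x)$. Since $R$ counts the visits to $E$ up to time $N+m$, the $R$-th visit occurs at time $\tau_{_{E,x}}(R-1)$ and the next one strictly after $N+m$, so
\[
\tau_{_{E,x}}(R-1) \le N+m < \tau_{_{E,x}}(R),
\]
and in particular $\tau_{_{E,x}}(R)=\tau_{_{E,x}}(R-1)+\varphi_R\le N+m+\varphi_R$. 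Because $R\ge R_0$ (more steps are now allowed), $M^1_{_{N,m}}(x):=\max_{1\le k\le R}\varphi_k\ge m$. The whole argument then hinges on locating precisely where the value $m$ and the terminal excursion $\varphi_R$ sit relative to the first and second maxima.

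First I would establish two structural facts. The excursion realising $m$ among the first $R_0$ excursions, say the $k_0$-th with $k_0\le R_0$, starts at time $\tau_{_{E,x}}(k_0-1)\le N$ and hence ends at $\tau_{_{E,x}}(k_0)=\tau_{_{E,x}}(k_0-1)+m\le N+m$; this forces $k_0\le R-1$, so the value $m$ is attained by a \emph{non-terminal} excursion. Secondly, any intermediate new excursion, i.e.\ one with $R_0<k\le R-1$, both starts after time $N$ and ends at $\tau_{_{E,x}}(k)\le\tau_{_{E,x}}(R-1)\le N+m$, so it is contained in an interval of length strictly less than $m$ and therefore $\varphi_k<m$. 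Combining the two facts gives $\max_{1\le k\le R-1}\varphi_k=m$, whence $M^1_{_{N,m}}(x)=\max\{m,\varphi_R\}$.

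I would then finish by a short case split on $\varphi_R$ versus $m$. If $\varphi_R\le m$ then $M^1_{_{N,m}}(x)=m$, so $\tau^1_{_{E,x}}(R)=\tau_{_{E,x}}(R)-m>N$ follows directly from the right defining inequality, while $\tau^1_{_{E,x}}(R)\le N+\varphi_R$ together with $\varphi_R\le M^2_{_{N,m}}(x)$ (either $\varphi_R<M^1_{_{N,m}}(x)$, or the value $m$ is attained at both $k_0$ and $R$) gives the upper bound. If instead $\varphi_R>m$, then $\varphi_R>\varphi_{R_0}$ forces $R>R_0$, so a genuinely new visit has occurred after time $N$ and $\tau_{_{E,x}}(R-1)\ge\tau_{_{E,x}}(R_0)>N$; here $M^1_{_{N,m}}(x)=\varphi_R$, $M^2_{_{N,m}}(x)=m$, and $\tau^1_{_{E,x}}(R)=\tau_{_{E,x}}(R)-\varphi_R=\tau_{_{E,x}}(R-1)$ lands in $(N,N+m]$. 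The main obstacle is exactly this second case of the lower bound: subtracting the single largest term $M^1_{_{N,m}}(x)$ could in principle push $\tau_{_{E,x}}(R)$ below $N$, and the key observation is that $\varphi_R>m$ cannot occur when the excursion in progress at time $N$ also straddles $N+m$ (the only situation giving $R=R_0$), so a fresh return past time $N$ is guaranteed. Finally, the statement is claimed only to hold \emph{eventually} because $M^2_{_{N,m}}(x)$ requires $R\ge2$, which holds for all large $N$ since $R_{_{E,N,m}}(x)\to\infty$ for $\mu$-a.e.\ $x\in E$.
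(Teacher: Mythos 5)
Your proposal is correct and takes essentially the same approach as the paper: the defining inequalities $N+m<\tau_{_{E,x}}(R)\le N+m+\varphi_R$ (with your abbreviations $R=R_{_{E,N,m}}(x)$, $m=m(N,E,x)$, $\varphi_k=(\varphi_{_E}\circ T_{_E}^{k-1})(x)$), the identity $\max_{1\le k\le R}\varphi_k=\max\{m,\varphi_R\}$ obtained by showing intermediate excursions have length strictly less than $m$, a case split on $\varphi_R\le m$ versus $\varphi_R>m$ for the lower bound, and $\min\{m,\varphi_R\}\le M^2_{_{N,m}}(x)$ for the upper bound. Your additional care about the tie case for the second maximum and about why the claim holds only ``eventually'' (one needs $R_{_{E,N,m}}(x)\ge 2$) merely fills in details the paper leaves implicit.
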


\begin{proof} We first give some remarks on $\tau_{_{E,x}}$ and $R_{_{E,N}}(x)$. Given $N\ge 1$, $R_{_{E,N}}(x)$ is the number of visits to $E$ up to time $N$, hence $R_{_{E,N}}(x)\le N+1$ and $T_{_E}^{R_{_{E,N}}(x)-1}(x)$ is the last visit to $E$ for $x$ up to time $N$. It follows that
\[
\tau_{_{E,x}}(R_{_{E,N}}(x)) = \sum_{k=1}^{R_{_{E,N}}(x)}\, (\varphi_{_E} \circ T_{_E}^{k-1}) (x) > N
\]
as the $(R_{_{E,N}}(x))$-th return to $E$, equivalently the $(R_{_{E,N}}(x)+1)$-th visit to $E$, happens after time $N$. Moreover
\[
\sum_{k=1}^{R_{_{E,N}}(x)}\, (\varphi_{_E} \circ T_{_E}^{k-1}) (x) = \varphi_{_E} \circ T_{_E}^{R_{_{E,N}}(x)-1} (x) + \tau_{_{E,x}}(R_{_{E,N}}(x)-1) \le \varphi_{_E} \circ T_{_E}^{R_{_{E,N}}(x)-1} (x) + N
\] 
as the $(R_{_{E,N}}(x)-1)$-th return to $E$, equivalently the $(R_{_{E,N}}(x))$-th visit to $E$, happens at time less than or equal to $N$.

Using now $R_{_{E,N,m}}(x)$, we have proved 
\begin{equation}\label{ineq-fund}
N+m(N,E,x) < \tau_{_{E,x}}(R_{_{E,N,m}}(x)) \le N+m(N,E,x) + \varphi_{_E} \circ T_{_E}^{R_{_{E,N,m}}(x)-1} (x).
\end{equation}

We now show that
\begin{equation}\label{max-s1}
\max\left\{ (\varphi_{_E} \circ T_{_E}^{k-1}) (x)\, :\, k=1,\dots, R_{_{E,N,m}}(x)\right\} = \max\left\{ m(N,E,x),\,  \varphi_{_E} \circ T_{_E}^{R_{_{E,N,m}}(x)-1} (x)\right\}.
\end{equation}
From \eqref{relations}, $m(N,E,x)$ maximises the return times $(\varphi_{_E} \circ T_{_E}^{k-1}) (x)$ for $k=1,\dots, R_{_{E,N}}(x)$, that is up to time $\tau_{_{E,x}}(R_{_{E,N}}(x))$. Moreover, repeating the same argument as above, the return times $(\varphi_{_E} \circ T_{_E}^{k-1}) (x)$ with $k=R_{_{E,N}}(x)+1,\dots, R_{_{E,N,m}}(x)-1$ are concerned with excursions outside $E$ for the orbit of $x$ with time in $(N, N+m(N,E,x)]$. Then
\[
m(N,E,x) = \max\left\{ (\varphi_{_E} \circ T_{_E}^{k-1}) (x)\, :\, k=1,\dots, R_{_{E,N}}(x)-1\right\}
\]
and \eqref{max-s1} is proved.

Then, if
\[
\max\left\{ (\varphi_{_E} \circ T_{_E}^{k-1}) (x)\, :\, k=1,\dots, R_{_{E,N,m}}(x)\right\} = m(N,E,x)
\]
it immediately follows from \eqref{ineq-fund} that
\[
N < \tau_{_{E,x}}(R_{_{E,N,m}}(x)) - \max_{1\le k\le R_{_{E,N,m}}(x)}\, (\varphi_{_E} \circ T_{_E}^{k-1}) (x) = \tau^1_{_{E,x}}(R_{_{E,N,m}}(x))\, ,
\]
otherwise, we obtain
\[
\tau^1_{_{E,x}}(R_{_{E,N,m}}(x)) = \tau_{_{E,x}}(R_{_{E,N,m}}(x)-1) >N
\]
since $x$ necessarily visits $E$ at some time $k$ in $(N,N+m(N,E,x)]$. In both cases we have thus obtained the first inequality to prove.

Moreover from \eqref{ineq-fund} and \eqref{max-s1} we obtain
\[
\tau_{_{E,x}}(R_{_{E,N,m}}(x)) - \max_{1\le k\le R_{_{E,N,m}}(x)}\, (\varphi_{_E} \circ T_{_E}^{k-1}) (x) \le N+ \min\left\{ m(N,E,x),\,  \varphi_{_E} \circ T_{_E}^{R_{_{E,N,m}}(x)-1} (x)\right\}
\]
and by \eqref{max-s1}
\[
 \min\left\{ m(N,E,x),\,  \varphi_{_E} \circ T_{_E}^{R_{_{E,N,m}}(x)-1} (x)\right\} \le M^2_{_{N,m}}(x).
 \]
The proof is finished.
\end{proof}

\begin{lemma}\label{useful-2}
For $\mu$-a.e.\ $x\in E$ we have
\[
\lim_{N\to \infty}\, \frac{\tau^1_{_{E,x}}(N)}{d(N)} = 1
\]
where $d(n)$ is the asymptotic inverse function of the sequence $(\alpha(n))$ defined in \eqref{alfa}.
\end{lemma}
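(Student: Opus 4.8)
The plan is to apply Lemma \ref{lemma-aar-nakada} to the sequence of random variables $Y_n := \varphi_{_E}\circ T_{_E}^{n-1}$ on the probability space $(E,\mu)$, and to identify its lightly trimmed sum with $\tau^1_{_{E,x}}$ once we establish that the trimming number $W$ equals $1$. First I would verify the hypotheses of Lemma \ref{lemma-aar-nakada}: the $Y_n$ are non-negative (return times are $\ge 1$) and identically distributed, since $\mu|_E$ is $T_{_E}$-invariant and $Y_n = Y_1\circ T_{_E}^{n-1}$, while the $\psi$-mixing condition with $\sum_n \psi(n)/n < \infty$ is precisely assumption (i) of Theorem \ref{th-elle1}. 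Because $\varphi_{_E}$ is integer valued, its distribution function $F(y)=\mu(\varphi_{_E}\le y)$ is a step function with $1-F(t)=\mu(A_{>j})$ for $t\in[j,j+1)$, which gives the exact identity
\[
\int_0^n (1-F(t))\,\rd t = \sum_{j=0}^{n-1}\mu(A_{>j}),
\]
so that $a(n):=n/\int_0^n(1-F(t))\,\rd t$ coincides with the sequence $\alpha(n)$ of \eqref{alfa}.

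Next I would determine $W$ from the Aaronson--Nakada condition \eqref{eq: cond AN}. For $r=0$ the integrand reduces to $(1-F(y))/\int_0^y(1-F(t))\,\rd t$, whose antiderivative is $\log\big(\int_0^y(1-F(t))\,\rd t\big)$; since Kac's formula \eqref{somma-inf} gives $\int_0^\infty(1-F(t))\,\rd t=\sum_{j\ge 0}\mu(A_{>j})=\infty$, this integral diverges, so $W\ge 1$. For $r=1$ the integrand equals
\[
\frac{y\,\big(\mu(A_{>\lfloor y\rfloor})\big)^2}{\big(\int_0^y(1-F(t))\,\rd t\big)^2},
\]
and comparing the integral over each unit interval $[n,n+1)$ with the corresponding term of the series in assumption (ii) of Theorem \ref{th-elle1} — using the monotonicity of $j\mapsto\mu(A_{>j})$ and the constancy of $F$ on unit intervals — shows the integral is finite. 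Hence $W=1$.

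It then remains to conclude. Lemma \ref{lemma-aar-nakada} supplies a sequence $(d(N))$ with $S_N^W(x)/d(N)\to 1$ for $\mu$-a.e.\ $x\in E$, and since $W=1$ the relevant trimmed sum is
\[
S_N^1(x) = \tau_{_{E,x}}(N) - \max_{1\le k\le N}\,(\varphi_{_E}\circ T_{_E}^{k-1})(x) = \tau^1_{_{E,x}}(N)
\]
by definition \eqref{trimmed-returns}. Moreover $d(N)$ may be taken as the inverse of $a(N)=\alpha(N)$, hence $d$ is the asymptotic inverse of $\alpha$, exactly as stated. I expect the main obstacle to be the bookkeeping in the computation of $W$: one must pass carefully between the continuous integral appearing in \eqref{eq: cond AN} and the discrete wandering-rate series of assumption (ii), and it is here that the integer-valuedness of $\varphi_{_E}$ and the exact formula for $\int_0^n(1-F)\,\rd t$ do the essential work.
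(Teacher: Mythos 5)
Your proposal is correct and follows essentially the same route as the paper: apply Lemma \ref{lemma-aar-nakada} to $(\varphi_{_E}\circ T_{_E}^{n-1})$, use the exact identity $\int_0^n(1-F(t))\,\mathrm{d}t=\sum_{j=0}^{n-1}\mu(A_{>j})$ to identify $a(n)$ with $\alpha(n)$, and establish $W=1$ by invoking assumption (ii) for $W\le 1$ and the divergence coming from Kac's formula \eqref{somma-inf} for $W\ge 1$. The only cosmetic difference is that for $W\ge 1$ you integrate the $r=0$ integrand exactly as a logarithmic derivative, whereas the paper bounds the integral from below by a divergent discrete series; both arguments are valid.
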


\begin{proof} 
By their definition \eqref{trimmed-returns}, the terms $\tau^1_{_{E,x}}(N)$ are the trimmed Birkhoff sums of $\varphi_{_E}$ for the system $(E,T_{_E})$. Thus, it is enough to show that Lemma \ref{lemma-aar-nakada} can be applied to the sequence of random variables $(\varphi_{_E}\circ T_{_E}^{n-1})$. 

By assumption (i), the sequence is $\psi$-mixing with mixing coefficient $\psi(n)$ fulfilling $\sum_{n\ge 1} \psi(n)/n <\infty$. Moreover, the distribution function of $(\varphi_{_E}\circ T_{_E}^{n-1})$ is given by
\[
F(y) = \mu (\varphi_{_E}\le y) = \sum_{k=1}^{\lfloor y\rfloor}\, \mu(A_k) = 1 - \mu(A_{>\lfloor y\rfloor}),
\]
by \eqref{level-sets} and \eqref{super-level-sets}. Thus, in \eqref{eq: cond AN} with $y_0=1$ we find
\[
\begin{aligned}
& \int_{1}^{\infty}\left(\frac{y\left(1-F(y)\right)}{\int_0^y\left(1-F(t)\right)\mathrm{d}t}\right)^{r+1} \frac{1}{y}\, \mathrm{d}y = \sum_{n\ge 1}\, \int_n^{n+1}\, \left( \frac{y\, \mu(A_{>n})}{\int_0^y\, \mu(A_{>\lfloor t\rfloor})\mathrm{d}t}\right)^{r+1} \frac 1y\, \mathrm{d}y \le \\[0.2cm]
& \le \sum_{n\ge 1}\, \frac{(n+1)^r\, (\mu(A_{>n}))^{r+1}}{(\int_0^n\, \mu(A_{>\lfloor t\rfloor})\mathrm{d}t)^{r+1}} = \sum_{n\ge 1}\, \frac{(n+1)^r\, (\mu(A_{>n}))^{r+1}}{(\sum_{j=0}^{n-1}\, \mu(A_{>j}))^{r+1}} 
\end{aligned}
\]
and by assumption (ii) we find $W\le 1$. Analogously,
\[
\int_{1}^{\infty}\left(\frac{y\left(1-F(y)\right)}{\int_0^y\left(1-F(t)\right)\mathrm{d}t}\right)^{r+1} \frac{1}{y}\, \mathrm{d}y \ge 
\sum_{n\ge 1}\, \frac{n^r\, (\mu(A_{>n+1}))^{r+1}}{(\sum_{j=0}^{n}\, \mu(A_{>j}))^{r+1}} 
\]
and \eqref{somma-inf} imply $W>0$, so that $W=1$. Thus, the proof is finished by applying Lemma \ref{lemma-aar-nakada} and recalling that the Birkhoff sums of $(\varphi_{_E}\circ T_{_E}^{n-1})$ trimmed by the largest entry are $\tau^1_{_{E,x}}(N)$.
\end{proof}

\begin{lemma}\label{useful-3}
For $\mu$-a.e.\ $x\in E$ it holds
\[
\lim_{N\to \infty}\, \frac{R_{_{E,N,m}}(x)}{\alpha(N)} = 1\, .
\]
\end{lemma}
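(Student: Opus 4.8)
The plan is to read off the asymptotics of $R_{_{E,N,m}}(x)$ from the already established behaviour of the trimmed return times. Throughout I would fix a point $x$ in the full-measure set on which Lemmas \ref{useful-1} and \ref{useful-2} hold and for which \eqref{eq: M/d_n} is valid, and abbreviate $n := R_{_{E,N,m}}(x)$. Since $N+m(N,E,x)$ is non-decreasing in $N$ and $T$ is conservative and ergodic, $n$ is non-decreasing in $N$ and $n\to\infty$ as $N\to\infty$. The three ingredients I would combine are the two-sided bound of Lemma \ref{useful-1}, the equivalence $\tau^1_{_{E,x}}(n)\sim d(n)$ of Lemma \ref{useful-2}, and the negligibility of the second maximum provided by \eqref{eq: M/d_n}.

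First I would turn Lemma \ref{useful-1} into an asymptotic for $\tau^1_{_{E,x}}(n)$. It gives
\[
N < \tau^1_{_{E,x}}(n) \le N + M^2_{_{N,m}}(x).
\]
In the proof of Lemma \ref{useful-2} it was shown that the Aaronson--Nakada condition \eqref{eq: cond AN} holds with $W=1$, so \eqref{eq: M/d_n} applies with $r=2>W$ and yields $M^2_{_{N,m}}(x)=M^2_{n}(x)=o(d(n))$ as $N\to\infty$; here the limit along the subsequence $n=R_{_{E,N,m}}(x)\to\infty$ is just a consequence of the full limit in \eqref{eq: M/d_n}. Combined with $\tau^1_{_{E,x}}(n)\sim d(n)$ from Lemma \ref{useful-2}, the error term is $M^2_{_{N,m}}(x)=o(\tau^1_{_{E,x}}(n))$, so the upper bound reads $\tau^1_{_{E,x}}(n)\le N + o(\tau^1_{_{E,x}}(n))$. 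Together with the lower bound $\tau^1_{_{E,x}}(n)>N$ this forces
\[
\tau^1_{_{E,x}}(R_{_{E,N,m}}(x)) \sim N, \qquad \text{hence} \qquad d(R_{_{E,N,m}}(x))\sim N.
\]

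It then remains to invert. Since $d$ is the asymptotic inverse of $\alpha$ and, under assumption (ii), the wandering rate $\sum_{j=0}^{n-1}\mu(A_{>j})$ is slowly varying so that $\alpha(n)=n/\sum_{j=0}^{n-1}\mu(A_{>j})$ is regularly varying of index $1$, an equivalence between positive sequences tending to infinity is preserved under $\alpha$. Applying $\alpha$ to $d(n)\sim N$ and using $\alpha(d(n))\sim n$ gives $n\sim\alpha(N)$, that is $R_{_{E,N,m}}(x)\sim\alpha(N)$, which is the claim. The two points to handle with care are the passage to the random index $n=R_{_{E,N,m}}(x)$ in \eqref{eq: M/d_n} and the use of the regular variation of $\alpha$ to move the equivalence through the inversion (concretely, $\alpha((1\pm\varepsilon)N)\sim(1\pm\varepsilon)\alpha(N)$ sandwiches $n$); both the sandwiching and the inversion are otherwise routine once $M^2_{_{N,m}}(x)=o(d(n))$ is in hand.
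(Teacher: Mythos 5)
Your proposal is correct and follows essentially the same route as the paper's proof: combine the two-sided bound of Lemma \ref{useful-1} with the equivalence $\tau^1_{_{E,x}}(R_{_{E,N,m}}(x))\sim d(R_{_{E,N,m}}(x))$ from Lemma \ref{useful-2}, kill the error term $M^2_{_{N,m}}(x)$ via \eqref{eq: M/d_n} with $r=2>W=1$, and then invert using that $\alpha$ is the asymptotic inverse of $d$. Your explicit justification of the inversion step through the regular variation of $\alpha$ (index $1$) is a detail the paper leaves implicit, but it is the same argument.
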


\begin{proof} Since $R_{_{E,N,m}}(x)$ is diverging, we obtain from Lemma \ref{useful-2} that
\[
\lim_{N\to \infty}\, \frac{\tau^1_{_{E,x}}(R_{_{E,N,m}}(x))}{d(R_{_{E,N,m}}(x))} = 1
\]
for $\mu$-a.e.\ $x\in E$. Moreover, from Lemma \ref{useful-1} we obtain
\[
\frac{N}{d(R_{_{E,N,m}}(x))} \le  \frac{\tau^1_{_{E,x}}(R_{_{E,N,m}}(x))}{d(R_{_{E,N,m}}(x))} \le \frac{N + M^2_{_{N,m}}(x)}{d(R_{_{E,N,m}}(x))} 
\]
where we recall that $M^2_{_{N,m}}(x)$ denotes the 2-nd maximum in $\{ (\varphi_{_E} \circ T_{_E}^{k-1}) (x)\, :\, k=1,\dots, R_{_{E,N,m}}(x)\}$. Thus, by applying Lemma \ref{lemma-aar-nakada} to $(\varphi_{_E}\circ T_{_E}^{n-1})$ with $W=1$ as shown above, we use \eqref{eq: M/d_n} with $r=2$ to obtain
\[
\lim_{N\to \infty}\, \frac{N}{d(R_{_{E,N,m}}(x))}  =\lim_{N\to \infty}\, \frac{N+M^2_{_{N,m}}(x)}{d(R_{_{E,N,m}}(x))} = 1
\]
for $\mu$-a.e.\ $x\in E$. The result now follows by using that $\alpha(n)$ is the asymptotic inverse of $d(n)$.
\end{proof}

Theorem \ref{th-elle1} follows by applying Lemma \ref{lem-hopf} and Lemma \ref{useful-3}. \qed

\subsection{Proof of Theorem \ref{th-elle1-bis}} As before we prove the theorem for $\mu$-a.e.\ $x\in E$. The first step is the analog of Lemma \ref{lem-hopf}. Using Hopf's Ratio Ergodic Theorem we have for $\mu$-a.e.\ $x\in E$
\begin{equation}\label{lem-hopf-2}
\lim_{N\to \infty}\, \frac{\sum_{k=1}^{N}\, (f\circ T^{k-1})(x) }{R_{_{E,N}}(x)} =  \lim_{N\to \infty}\, \frac{\sum_{k=1}^{N}\, (f\circ T^{k-1})(x) }{(\UU_{_E}\circ T^{N})(x)+\sum_{k=1}^{N}\, (\UU_{_E}\circ T^{k-1})(x)} = \int_X\, f\, \rd\mu.
\end{equation}
Then we consider the sequence $\tau_{_{E,x}}$ defined in \eqref{returns} and the trimmed version $\tau^1_{_{E,x}}$. In Lemma \ref{useful-1} we have proved
\begin{equation} \label{stima-tau}
N< \tau_{_{E,x}}(R_{_{E,N}}(x)) \le N + \varphi_{_E} \circ T_{_E}^{R_{_{E,N}}(x)-1}(x)
\end{equation}
for $\mu$-a.e.\ $x\in E$. For the trimmed sums 
\[
\tau^1_{_{E,x}}(R_{_{E,N}}(x)) = \tau_{_{E,x}}(R_{_{E,N}}(x)) - \max_{1\le k\le R_{_{E,N}}(x)}\, (\varphi_{_E} \circ T_{_E}^{k-1}) (x)  
\]
we now show that
\begin{equation} \label{useful-1-bis}
N-w(N,E,x) \le \tau^1_{_{E,x}}(R_{_{E,N}}(x)) \le N -w(N,E,x) + M^2_{_N}(x)
\end{equation}
where $w(N,E,x)$ is defined in \eqref{def-w} and $M^2_{_{N}}(x)$ denotes the 2-nd maximum in $\{ (\varphi_{_E} \circ T_{_E}^{k-1}) (x)\, :\, k=1,\dots, R_{_{E,N}}(x)\}$.

We remark that analogously to \eqref{relations} we have for $\mu$-a.e.\ $x\in E$
\begin{equation}\label{relations-w}
w(N,E,x) = \max \Big\{ \set{(\varphi_{_E}\circ T_{_E}^{k-1})(x)\, :\, k=1,\dots, R_{_{E,N}}(x)-1}\, ,\, N- \tau_{_{E,x}}(R_{_{E,N}}(x)-1) \Big\}
\end{equation}
since $ \tau_{_{E,x}}(R_{_{E,N}}(x)-1)$ is the time of the $(R_{_{E,N}}(x)-1)$-th return to $E$, the last before step $N+1$. We thus consider two cases. If
\[
\max\left\{ (\varphi_{_E} \circ T_{_E}^{k-1}) (x)\, :\, k=1,\dots, R_{_{E,N}}(x)\right\} = \max\left\{ (\varphi_{_E} \circ T_{_E}^{k-1}) (x)\, :\, k=1,\dots, R_{_{E,N}}(x)-1\right\},
\]
by \eqref{relations-w} it follows
\[
w(N,E,x) = \max\left\{ (\varphi_{_E} \circ T_{_E}^{k-1}) (x)\, :\, k=1,\dots, R_{_{E,N}}(x)-1\right\}
\]
since $\varphi_{_E} \circ T_{_E}^{R_{_{E,N}}(x)-1}(x) > N- \tau_{_{E,x}}(R_{_{E,N}}(x)-1)$. So \eqref{useful-1-bis} follows from \eqref{stima-tau} by subtracting $w(N,E,x)$ from all terms and using $\varphi_{_E} \circ T_{_E}^{R_{_{E,N}}(x)-1}(x) \le M^2_{_N}(x)$. If on the contrary
\[
\varphi_{_E} \circ T_{_E}^{R_{_{E,N}}(x)-1}(x) = \max\left\{ (\varphi_{_E} \circ T_{_E}^{k-1}) (x)\, :\, k=1,\dots, R_{_{E,N}}(x)\right\},
\]
then 
\[
\tau^1_{_{E,x}}(R_{_{E,N}}(x)) = \tau_{_{E,x}}(R_{_{E,N}}(x)) - \varphi_{_E} \circ T_{_E}^{R_{_{E,N}}(x)-1}(x) =\tau_{_{E,x}}(R_{_{E,N}}(x)-1).
\]
Now from \eqref{relations-w} we may have
\[
w(N,E,x) = N- \tau_{_{E,x}}(R_{_{E,N}}(x)-1) = N- \tau^1_{_{E,x}}(R_{_{E,N}}(x))
\]
in which case \eqref{useful-1-bis} is immediate. Otherwise we may have
\[
w(N,E,x) = \max\left\{ (\varphi_{_E} \circ T_{_E}^{k-1}) (x)\, :\, k=1,\dots, R_{_{E,N}}(x)-1\right\} 
\]
in which case
\[
N- \tau^1_{_{E,x}}(R_{_{E,N}}(x)) \le w(N,E,x) = M^2_{_N}(x)
\]
so that
\[
N- w(N,E,x) \le  \tau^1_{_{E,x}}(R_{_{E,N}}(x)) \le N = N - w(N,E,x) + M^2_{_N}(x)
\]
where we have used \eqref{stima-tau} in the second inequality, and again we have proved \eqref{useful-1-bis}.

Finally, we use Lemma \ref{useful-2} to write as in the proof of Lemma \ref{useful-3}
\[
\lim_{N\to \infty}\, \frac{\tau^1_{_{E,x}}(R_{_{E,N}}(x))}{d(R_{_{E,N}}(x))} = 1
\]
for $\mu$-a.e.\ $x\in E$. Moreover, from \eqref{useful-1-bis} we obtain
\[
\frac{N-w(N,E,x)}{d(R_{_{E,N}}(x))} \le  \frac{\tau^1_{_{E,x}}(R_{_{E,N}}(x))}{d(R_{_{E,N}}(x))} \le \frac{N -w(N,E,x) + M^2_{_{N}}(x)}{d(R_{_{E,N}}(x))}. 
\]
Thus, by applying Lemma \ref{lemma-aar-nakada} to $(\varphi_{_E}\circ T_{_E}^{n-1})$ with $W=1$ as above, we use \eqref{eq: M/d_n} with $r=2$ to obtain
\[
\lim_{N\to \infty}\, \frac{N-w(N,E,x)}{d(R_{_{E,N}}(x))} = 1
\]
for $\mu$-a.e.\ $x\in E$. Using now that $\alpha(n)$ is the asymptotic inverse of $d(n)$, we obtain
\begin{equation}\label{fine-2}
\lim_{N\to \infty}\, \frac{R_{_{E,N}}(x)}{\alpha(N-w(N,E,x))} = 1
\end{equation}
for $\mu$-a.e.\ $x\in E$. Theorem \ref{th-elle1-bis} follows from \eqref{lem-hopf-2} and \eqref{fine-2}.
\qed

\subsection{Proof of Lemma \ref{lem-bound-wt}}
We first prove that
 \begin{align}
  \mu\left(\left\{x\in E\, :\, \max_{1\le i\le n} \left(\varphi_{_E}\circ T_{_E}^{i-1}\right)(x)>q\left(n\, \log n \, \log^2(\log n) \right)\text{for infinite $n$}\right\}\right)=0.\label{eq: max not too large}
 \end{align}
We note that for all $2^n\leq k< 2^{n+1}$ 
\begin{equation}\label{eq: BC 1}
 \begin{aligned}
& \left\{x\in E\, :\, \max_{1\le i\le k} \left(\varphi_{_E}\circ T_{_E}^{i-1}\right)(x)>q\left( k\, \log k \, \log^2(\log k) \right)\right\}\, \subset \\[0.2cm]
  &\subset \left\{x\in E\, :\, \max_{1\le i\le 2^{n+1}} \left(\varphi_{_E}\circ T_{_E}^{i-1}\right)(x)>q\left( 2^n\, \log 2^n \, \log^2(\log 2^n) \right)\right\}.
 \end{aligned}
\end{equation}
 Furthermore, we have by the definition of $q$ that 
 \begin{equation}
 \begin{aligned} \label{eq: BC 2}
& \mu\left(\left\{x\in E\, :\, \max_{1\le i\le 2^{n+1}} \left(\varphi_{_E}\circ T_{_E}^{i-1}\right)(x)>q\left(  2^n\, \log 2^n \, \log^2(\log 2^n)\right)\right\} \right)\le\\[0.2cm]
  &\le  \sum_{j=1}^{2^{n+1}}\mu\left(\left\{x\in E\, :\, \left(\varphi_{_E}\circ T_{_E}^{j-1}\right)(x)>q\left( 2^n\, \log 2^n \, \log^2(\log 2^n)\right) \right\}\right)=\\[0.2cm]
  &= \sum_{j=1}^{2^{n+1}}\frac{1}{2^n\, \log 2^n \, \log^2(\log 2^n)} =\frac{2}{\log 2^n \, \log^2(\log 2^n)}.
 \end{aligned}
 \end{equation}
 Since
 \begin{align*}
  \sum_{n=1}^{\infty} \frac{2}{\log 2^n \, \log^2(\log 2^n)}<\infty,
 \end{align*}
 we obtain by the first Borel-Cantelli lemma and a combination of \eqref{eq: BC 1} and \eqref{eq: BC 2} that \eqref{eq: max not too large} holds. From \eqref{fine-2} we obtain that for $\mu$-a.e.\ $x\in E$  
 \begin{align*}
  \limsup_{n\to\infty}\frac{R_{_{E,n}}(x)}{\alpha(n)}\leq 1.
 \end{align*}
 Hence,
 \[
    \mu\left(\left\{ x\in E\, :\, \max_{1\le i\le R_{_{E,n}}(x)} \left(\varphi_{_E}\circ T_{_E}^{i-1}\right)(x)>q\left( \alpha(n)\, \log (\alpha(n)) \, \log^2(\log (\alpha(n))) \right)\text{for infinite $n$}\right\}\right)=0
 \]
and with the definition of $\xi$ the statement of the lemma follows. 
\qed

\subsection{Proof of Theorem \ref{th-notelle1}} 
For simplicity let us first assume that $f$ is constant on the level sets $E_n$ and we can take $g_1= g_2=f\ge 0$ in the assumptions. Hence there exists a non-decreasing $G:\N \to \R_{\ge 0}$ with $f^E(x) = G(n)$ for all $x\in A_n$, such that $G(n)= n\, L(n)$ with $L$ a normalised slowly varying function and that satisfies (ii)-(c), (d) and (e) with $\Gamma_i(n)=\Gamma(n) := \min\{ k\in \N\, :\, G(k)> n\}$. In this case the sequence of random variables $(f^E\circ T_{_E}^{n-1})$ is $\psi$-mixing since it generates the same $\sigma$-field as the sequence $(\varphi_{_E}\circ T_{_E}^{n-1})$. Thus it satisfies the first assumption of Lemma \ref{lemma-aar-nakada}. 

Moreover the distribution function of the random variables $(f^E\circ T_{_E}^{n-1})$ is given by
\[
F(y) = \mu ( f^E \le y) = 1 - \mu (A_{\ge \Gamma(y)})
\]
so that \eqref{cond-fe} implies $W\le 1$ in \eqref{eq: cond AN}. Setting $G(0)=0$ we have by definition $\Gamma(y) = k$ for all $y\in [G(k-1),G(k))$ for all $k\ge 1$. Hence
\[
\int_0^\infty\, \mu (A_{\ge \Gamma(y)}) \mathrm{d}y = \sum_{k=1}^\infty\, (G(k)-G(k-1))\, \mu(A_{> k-1}) = \sum_{k=1}^\infty\, f_{k-1}\, \mu(A_{> k-1}) = \int_0^\infty\, f(x) \mathrm{d}\mu = \infty
\]
where we have used \eqref{fe-const} with $f_n= f|_{E_n}$ for all $n\ge 0$, with $E_n$ the level sets \eqref{level-sets-hit} of the hitting time function to $E$. It follows that $W>0$, so $W=1$.

Applying Lemma \ref{lemma-aar-nakada} to $(f^E\circ T_{_E}^{n-1})$ it follows that letting $d(n)$ be the inverse function of 
\begin{equation} \label{alfa-new}
a(y) := \frac{y}{ \int_0^y\, \mu (A_{\ge \Gamma(t)}) \mathrm{d}t}
\end{equation}
we have for $N\to \infty$
\begin{equation}\label{limit-fe}
\sum_{n=1}^{R_{_{E,N}}(x) -1}\, (f^E \circ T^{n-1}_{_E})(x) \sim d(R_{_{E,N}}(x) -1) + \max_{1\le k\le R_{_{E,N}}(x) -1}\, (f^E \circ T^{k-1}_{_E})(x)
\end{equation}
for $\mu$-a.e.\ $x\in E$, since $R_{_{E,N}}(x)$ is diverging.

Looking at \eqref{f-vs-fe} we need now to consider the last term on the right hand side. We recall that if $\tau_{_{E,x}}(R_{_{E,N}}(x)-1)=N$ this term does not appear. If $\tau_{_{E,x}}(R_{_{E,N}}(x)-1)<N$ let us first write
\[
\sum_{k=\tau_{_{E,x}}(R_{_{E,N}}(x)-1)+1}^{N}\, (f\circ T^{k-1})(x) = \sum_{k=\tau_{_{E,x}}(R_{_{E,N}}(x)-1)+1}^{\tau_{_{E,x}}(R_{_{E,N}}(x))}\, (f\circ T^{k-1})(x) - \sum_{k=N+1}^{\tau_{_{E,x}}(R_{_{E,N}}(x))}\, (f\circ T^{k-1})(x)
\]
where $\tau_{_{E,x}}(R_{_{E,N}}(x))\ge N+1$ by definition. Then by definition \eqref{induced-obs} and by \eqref{fe-const}
\[
\Big(f^E \circ T_{_E}^{R_{_{E,N}}(x) -1}\Big)(x) = \sum_{k=\tau_{_{E,x}}(R_{_{E,N}}(x)-1)+1}^{\tau_{_{E,x}}(R_{_{E,N}}(x))}\, (f\circ T^{k-1})(x) =
\]
\[
=  \sum_{j=0}^{(\varphi_{_E}\circ T_{_E}^{R_{_{E,N}}(x) -1})(x)-1}\, f_j = G\Big(\Big(\varphi_{_E}\circ T_{_E}^{R_{_{E,N}}(x) -1}\Big)(x)\Big)
\]
and by \eqref{fe-const}
\[
\sum_{k=N+1}^{\tau_{_{E,x}}(R_{_{E,N}}(x))}\, (f\circ T^{k-1})(x) = \sum_{j=1}^{\tau_{_{E,x}}(R_{_{E,N}}(x))-N}\, f_j = G\Big(\tau_{_{E,x}}(R_{_{E,N}}(x))-N+1\Big) - f_0\, .
\]
Since $(f^E \circ T^{k-1}_{_E})(x) = G((\varphi_{_E}  \circ T^{k-1}_{_E})(x))$, using \eqref{limit-fe} in \eqref{f-vs-fe}, we finally have
\begin{equation}\label{first-step}
\begin{aligned}
S_Nf(x) \sim &\,  d(R_{_{E,N}}(x) -1) + \max_{1\le k\le R_{_{E,N}}(x) -1}\, G\Big((\varphi_{_E}  \circ T^{k-1}_{_E})(x)\Big) +\\[0.2cm]
& + G\Big(\Big(\varphi_{_E}\circ T_{_E}^{R_{_{E,N}}(x) -1}\Big)(x)\Big) - G\Big(\tau_{_{E,x}}(R_{_{E,N}}(x))-N+1\Big) 
\end{aligned}
\end{equation}
where we have discarded the constant $f_0$.

Let us fix $x\in E$, a constant $\eta\in (0,\frac 12)$\footnote{The bound $\eta<\frac 12$ simplifies a step of the proof but is not necessary.}, and let $(U_j)$ be the sequence of the return times to $E$, that is $U_j = \tau_{_{E,x}}(j)$ for $j\ge 1$ and $R_{_{E,U_j}}(x) -1=j$. Then we consider the following subintervals of $\N$ depending on $x$:
\begin{itemize}
\item the intervals corresponding to excursions outside $E$ which do not achieve a new record for the return time
\begin{equation}\label{int-min}
\II_j := \left\{ N\in (U_j, U_{j+1})\, :\, (\varphi_{_E}  \circ T^{j}_{_E})(x) \le \max_{1\le k\le j}\, (\varphi_{_E}  \circ T^{k-1}_{_E})(x)\right\}\, ;
\end{equation}

\item the intervals corresponding to excursions outside $E$ which achieve a new record for the return time
\begin{equation}\label{int-max}
\JJ_j := \left\{ N\in (U_j, U_{j+1})\, :\, (\varphi_{_E}  \circ T^{j}_{_E})(x) > \max_{1\le k\le j}\, (\varphi_{_E}  \circ T^{k-1}_{_E})(x)\right\}\, ;
\end{equation}
we also need to consider the subintervals for which $N$ is asymptotically not too far from $U_{j+1}$ with respect to $U_j$ up to $\eta$ in the following sense
\begin{equation}\label{int-max-eta}
\JJ_j^\eta := \left\{ N\in \JJ_j\, :\, U_{j+1}-N \le \eta\, (N-U_{j})  \right\}\, .
\end{equation}
\end{itemize}
The sequence $(U_j)$ and the intervals $\II_j$ and $\JJ_j$ depend on $x$, we have dropped this dependence in the notation for simplicity.

We now show that for $\mu$-a.e.\ $x\in E$ we have
\begin{equation} \label{final-1}
\lim_{N\to \infty}\, \frac{1}{G(N)}\, \sum_{k=1}^{N}\, (f\circ T^{k-1})(x) = 1\, .
\end{equation}
The proof follows from the following lemmas.

\begin{lemma} \label{lemma-step1}
For $\mu$-a.e.\ $x\in X$ we have
\begin{equation} \label{need-asymp}
d(R_{_{E,N}}(x) -1) \sim d(R_{_{E,N}}(x)) \sim G(N-w(N,E,x))\, .
\end{equation}
\end{lemma}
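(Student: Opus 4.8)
The plan is to derive both asymptotic relations in \eqref{need-asymp} from the already–established random estimate $R_{_{E,N}}(x)\sim\alpha(N-w(N,E,x))$ of \eqref{fine-2}, combined with a purely deterministic relation, namely $d(\alpha(n))\sim G(n)$ as $n\to\infty$. The deterministic relation is the real content of the lemma and is exactly where assumption (ii)-(e) is used; once it is in place, the passage to $R_{_{E,N}}(x)$ is a soft consequence of the regular variation of $d$.

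First I would prove $d(\alpha(n))\sim G(n)$. Since $\mu(A_{\ge\Gamma(t)})$ is non-increasing in $t$ and $\int_0^\infty\mu(A_{\ge\Gamma(t)})\,\mathrm{d}t=\infty$, a sum–integral comparison gives $\int_0^y\mu(A_{\ge\Gamma(t)})\,\mathrm{d}t\sim\ell(y)$, where $\ell(n)=\sum_{k=0}^n\mu(A_{\ge\Gamma(k)})$ is slowly varying; hence the function $a(y)\sim y/\ell(y)$ of \eqref{alfa-new} is regularly varying of index $1$, and by the inversion theorem for regularly varying functions (Appendix \ref{sec:svf}) so is its inverse $d$, say $d(n)=n\,s(n)$ with $s$ slowly varying. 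Inverting $a$ yields the self-referential relation $d(n)\sim n\,\ell(d(n))=n\,\ell(n\,s(n))$, and here the assumption that $\ell$ is super-slowly varying with rate function itself (first part of (ii)-(e)) forces $\ell(n\,s(n))\sim\ell(n)$, so that $d(n)\sim n\,\ell(n)$. Evaluating at $n=\alpha(N)$ and invoking the second part of (ii)-(e) together with $\alpha(N)=N/\sum_{j=0}^{N-1}\mu(A_{>j})$ and $G(N)=N\,L(N)$ gives
\[
d(\alpha(N))\sim\alpha(N)\,\ell(\alpha(N))\sim\alpha(N)\,L(N)\sum_{k=0}^{N}\mu(A_{>k})\sim N\,L(N)=G(N),
\]
where the last step uses $\sum_{k=0}^{N}\mu(A_{>k})\sim\sum_{j=0}^{N-1}\mu(A_{>j})$.

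Second I would transfer this to the orbit quantities. By conservativity and ergodicity $R_{_{E,N}}(x)\to\infty$ for $\mu$-a.e.\ $x\in E$, and by \eqref{fine-2} one has $R_{_{E,N}}(x)/\alpha(N-w(N,E,x))\to1$; in particular $\alpha(N-w(N,E,x))\to\infty$, forcing $N-w(N,E,x)\to\infty$. Since $d$ is regularly varying of index $1$, the uniform convergence theorem turns the convergence $R_{_{E,N}}(x)/\alpha(N-w(N,E,x))\to1$ into $d(R_{_{E,N}}(x))/d(\alpha(N-w(N,E,x)))\to1$, and the deterministic step gives $d(\alpha(N-w(N,E,x)))\sim G(N-w(N,E,x))$; this is the second equivalence in \eqref{need-asymp}. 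Applying the same property to $(R_{_{E,N}}(x)-1)/R_{_{E,N}}(x)\to1$ yields $d(R_{_{E,N}}(x)-1)\sim d(R_{_{E,N}}(x))$, the first equivalence.

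The hard part is the deterministic relation $d(\alpha(n))\sim G(n)$, and specifically the collapse $\ell(d(n))\sim\ell(n)$: the inversion $d(n)\sim n\,\ell(d(n))$ is self-referential, with the correction factor $s(n)$ living precisely at the scale $\ell(n)$ inside the argument of $\ell$, so ordinary slow variation of $\ell$ is not enough and the super-slow variation with rate function itself is indispensable. A secondary point requiring care is the legitimacy of pushing the relation $\sim$ through the inverse $d$, which is ensured by the regular variation of index $1$ of both $a$ and $d$; the sum–integral comparison and the harmless $\pm1$ shifts in the indices are routine.
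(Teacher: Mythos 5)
Your proof has the same architecture as the paper's: both reduce \eqref{need-asymp} to the deterministic relation $d(\alpha(n))\sim G(n)$ by combining \eqref{fine-2} with the index-$1$ regular variation of $d$ (which also yields $d(R_{_{E,N}}(x)-1)\sim d(R_{_{E,N}}(x))$), both identify $d(n)\sim n\,\ell(n)$ as the crux where (ii)-(e) enters, and both finish with \eqref{cond-uffa} and $\sum_{k=0}^{N}\mu(A_{>k})\sim\sum_{j=0}^{N-1}\mu(A_{>j})$. The gap is in how you establish $d(n)\sim n\,\ell(n)$. You write $d(n)=n\,s(n)$, derive the self-referential relation $d(n)\sim n\,\ell(n\,s(n))$, and assert that super-slow variation of $\ell$ with rate function itself ``forces'' $\ell(n\,s(n))\sim\ell(n)$. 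But Definition \ref{def-ssv} only controls $\ell\big(n\,\ell(n)^{\delta}\big)$ for $\delta\in[0,1]$; to apply it to $\ell(n\,s(n))$ you must write $s(n)=\ell(n)^{\delta(n)}$ with $\delta(n)\le 1$, i.e.\ you need the upper bound $s(n)\lesssim\ell(n)$ --- which is precisely (half of) the statement being proven. A priori you only have the \emph{opposite} inequality: since $\int_0^y\mu(A_{\ge\Gamma(t)})\,\mathrm{d}t\ge 1$ eventually, $a(y)\le y$, hence $d(n)\gtrsim n$ and $s(n)\sim\ell(d(n))\ge(1+o(1))\,\ell(n)$ by monotonicity and slow variation of $\ell$. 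So the step, as written, is circular; and the worry is not vacuous, since for a general non-decreasing slowly varying $\ell$ (e.g.\ $\ell(x)=\exp(\sqrt{\log x})$, where $\ell(x\,\ell(x))\sim\sqrt{e}\,\ell(x)$) the inverse of $x/\ell(x)$ is \emph{not} asymptotic to $x\,\ell(x)$, i.e.\ the collapse $\ell(d(n))\sim\ell(n)$ is a genuine theorem, not a formal consequence of the self-referential relation.

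The gap can be closed in two ways. The paper's way: invoke the de Bruijn conjugate, \cite[Thm.~1.5.13]{reg-var-book}, together with \cite[Cor.~2.3.4]{reg-var-book}, which is exactly the statement that the hypothesis of Definition \ref{def-ssv} with rate function itself implies $\ell^{\#}\sim 1/\ell$, i.e.\ that the inversion collapses; this is the black box your paragraph is implicitly re-deriving. Alternatively, a direct repair of your argument that avoids the circularity: evaluate $a$ at the \emph{known} candidate point, $a(n\,\ell(n))=n\,\ell(n)/\big(\ell(n\,\ell(n))(1+o(1))\big)\sim n$, which uses super-slow variation only at the argument $n\,\ell(n)$ (the $\delta=1$ case), and then apply $d$ and use $d(a(y))\sim y$ together with the uniform convergence theorem for the index-$1$ regularly varying $d$ to conclude $d(n)\sim d\big(a(n\,\ell(n))\big)\sim n\,\ell(n)$. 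A smaller point: you assert that $\ell$ is slowly varying without justification; in the paper this comes from assumption (ii)-(d), i.e.\ condition \eqref{cond-fe}, via the remark in \cite{aar-nakada}, not from (ii)-(e). The rest of your argument (sum--integral comparison, the transfer through \eqref{fine-2}, and the $\pm 1$ shift) is correct and matches the paper.
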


\begin{proof}
Here we use the notions of slowly and regularly varying functions from Appendix \ref{sec:svf}. By a remark in \cite{aar-nakada}, condition \eqref{cond-fe} implies that the sequence $(a(n))$ defined in \eqref{alfa-new} is a regularly varying function with exponent $1$, and the same is true for its inverse $d(n)$ (see for example \cite[Prop. 1.5.14]{reg-var-book}). For such a function we have that $d(b_n)\sim d(b_n-1)$ for all diverging sequences of positive numbers $(b_n)$. This shows that for $\mu$-a.e.\ $x\in X$ we have $d(R_{_{E,N}}(x) -1) \sim d(R_{_{E,N}}(x))$.

Moreover, from \eqref{fine-2} and again from $d$ being regularly varying with index 1, we have $d(R_{_{E,N}}(x)) \sim d(\alpha(N-w(N,E,x)))$, where $\alpha(n)$ is given in \eqref{alfa}. Using now the theory of the \emph{de Bruijn conjugate} of a slowly varying function (see \cite[Thm. 1.5.13]{reg-var-book}), letting $a(n) = n/ \ell(n)$ with 
\[
\ell(n):= \sum_{k=0}^{n}\, \mu(A_{\ge \Gamma(k)})\, ,
\]  
by assumption (ii)-(e) we know that $\ell$ is super-slowly varying at infinity with rate function itself (see Definition \ref{def-ssv}), and by \cite[Cor. 2.3.4]{reg-var-book} it follows that its de Bruijn conjugate $\ell^{\#}$ satisfies $\ell^{\#}(n) \sim 1/\ell(n)$. Hence
\[
d(n) \sim n\, \ell(n)
\]
and by \eqref{cond-uffa}
\[
d(\alpha(n)) \sim \alpha(n)\, \ell\Big(\alpha(n)\Big) \sim G(n)
\]
as $n\to \infty$. It follows that
\[
 d\left(\alpha(N-w(N,E,x))\right)\sim G\left(N-w(N,E,x)\right)
\]
and the lemma is proved.
\end{proof}

\begin{remark}\label{wand-rate-slowly-var}
Here we show that under condition \eqref{cond-fe} also the sequence $(\alpha(n))$ defined in \eqref{alfa} is a regularly varying function with exponent $1$. Indeed by the remark in \cite{aar-nakada}, condition \eqref{cond-fe} implies that $\gamma(t)=\sum_{k=1}^{\infty} \mu\left(A_{k}\right)\, \min\left\{k \, L(k), t\right\}$ is normalised slowly varying. (Indeed, the remark only states slow variation, but proves actually normalised slow variation.) Since $L(t)$ is normalised slowly varying and $nL(n)$ is non-decreasing, we can show that
\begin{align*}
 \gamma(t\, L(t))
 = \sum_{k=1}^{\infty} \mu\left(A_{k}\right)\, \min\left\{k\, L(k), t\, L(t)\right\}
 = \sum_{k=1}^{\lfloor t\rfloor} \mu(A_{>k})\, \Big(k\,L(k)-(k-1)L(k-1)\Big)
\end{align*}
is normalised slowly varying as well. Referring to \eqref{eq:karamata-norm}, we may write $\gamma(t)=\exp(\int_{\kappa}^t (\eta(x)/x) \mathrm{d}x)$ with $\eta(x)$ tending to zero and $L(t)=\exp(\int_{\kappa}^t (\varepsilon(x)/x) \mathrm{d}x)$ with $\varepsilon(x)$ tending to zero, by extending the domain of definition of $L$ by linear interpolation. Then we have for $t\notin\mathbb{N}$
\begin{equation} \label{der-gamma}
 \frac{\mathrm{d}}{\mathrm{d}t} \gamma(t\, L(t))= \frac{ \gamma(t\, L(t))\, \eta(t\, L(t))\, \left(1+\varepsilon(t)\right)}{t}
\end{equation}
and since $\eta(t\, L(t))\, \left(1+\varepsilon(t)\right)$ tends to zero, $\gamma(t\, L(t))$ is normalised slowly varying (see eq.\ (1.3.4) and subsequent comments in \cite{reg-var-book}).

In the next steps we will compare $\gamma(t\, L(t))$ with the wandering rate $w_n(E):=\sum_{k=1}^{n-1}\mu(A_{>k})$. Letting $\widetilde{L}(k):=1/(k\,L(k)-(k-1)L(k-1))$, we have  
\begin{align*}
 w_n(E)= & \sum_{k=1}^{n-1}\, \mu(A_{>k})\, (k\, L(k)-(k-1)L(k-1))\, \frac{1}{k\, L(k)-(k-1)L(k-1)}= \\[0.2cm]
 = & \sum_{k=1}^{n-1}\, \Big( \gamma(k\, L(k)) - \gamma((k-1)L(k-1))\Big)\, \widetilde{L}(k) = \sum_{k=1}^{n-1}\, \int_{k-1}^k\, \widetilde{L}(k) \, \left( \frac{\mathrm{d}}{\mathrm{d}t} \gamma(t\, L(t)) \right)\, \mathrm{d}t\, .
\end{align*}
Since $\widetilde{L}(k)$ is asymptotic to $1/L(k)$ it is slowly varying, then by \eqref{der-gamma} and \cite[Prop. 1.5.9a]{reg-var-book} also $w_n(E)$ is slowly varying. Thus, $\alpha(n)$ is a regularly varying function with exponent $1$.
\end{remark}

\begin{lemma}\label{lemma-2}
The limit in \eqref{final-1} holds for the sequence $(U_j)$.
\end{lemma}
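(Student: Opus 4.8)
The plan is to test \eqref{final-1} along the return times $N=U_j=\tau_{_{E,x}}(j)$, where the incomplete final excursion in \eqref{f-vs-fe} disappears and the sum reduces to a genuine trimmed Birkhoff sum of $f^E$. At $N=U_j$ we have $R_{_{E,U_j}}(x)-1=j$ and $\tau_{_{E,x}}(R_{_{E,U_j}}(x)-1)=U_j=N$, so the second sum in \eqref{f-vs-fe} is empty and $S_{U_j}f(x)=\sum_{n=1}^{j}(f^E\circ T_{_E}^{n-1})(x)$. Reading \eqref{relations-w} at $N=U_j$ gives $N-\tau_{_{E,x}}(R_{_{E,N}}(x)-1)=0$, whence $w(U_j,E,x)=\max_{1\le k\le j}(\varphi_{_E}\circ T_{_E}^{k-1})(x)$; since $G$ is non-decreasing and $(f^E\circ T_{_E}^{k-1})(x)=G((\varphi_{_E}\circ T_{_E}^{k-1})(x))$, the largest of the induced values equals $G(w(U_j,E,x))$.

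First I would insert $R_{_{E,N}}(x)-1=j$ into \eqref{limit-fe}, obtaining for $\mu$-a.e.\ $x\in E$ that $S_{U_j}f(x)\sim d(j)+G(w(U_j,E,x))$ as $j\to\infty$. Then Lemma \ref{lemma-step1} evaluated at $N=U_j$ gives $d(j)=d(R_{_{E,U_j}}(x)-1)\sim G(U_j-w(U_j,E,x))$; since all terms are non-negative and $G(U_j-w(U_j,E,x))\to\infty$, substituting this in yields
\[
S_{U_j}f(x)\sim G\big(U_j-w(U_j,E,x)\big)+G\big(w(U_j,E,x)\big).
\]

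It then remains to show that the right-hand side is asymptotic to $G(U_j)$, and this is the only genuinely analytic point. Set $a_j:=U_j-w(U_j,E,x)=\tau^1_{_{E,x}}(j)$ and $b_j:=w(U_j,E,x)$, so that $a_j+b_j=U_j$; both diverge $\mu$-a.s.\ (the trimmed sum obeys $a_j\ge j-1$, and the running maximum $b_j$ of the unbounded variables $\varphi_{_E}\circ T_{_E}^{k-1}$ tends to infinity). I would then invoke the additivity relation $G(a+b)\sim G(a)+G(b)$ as $a,b\to\infty$, valid for any $G$ regularly varying of index $1$; here $G(n)=nL(n)$ with $L$ normalised slowly varying, so it applies. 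This relation follows by a subsequence argument: after passing to a subsequence we may assume $\min(a,b)/\max(a,b)\to c\in[0,1]$, and if $c>0$ the uniform convergence theorem for slowly varying functions (Appendix \ref{sec:svf}) gives $L(a+b)/L(a)\to 1$ and $L(b)/L(a)\to 1$, while if $c=0$, say $b=o(a)$, then $L(a+b)\sim L(a)$ and Potter's bounds give $G(b)/G(a)\le 2(b/a)^{1-\delta}\to 0$ for any $\delta\in(0,1)$; in either case the ratio tends to $1$. Applying this with $a=a_j$, $b=b_j$ gives $S_{U_j}f(x)\sim G(U_j)$ for $\mu$-a.e.\ $x\in E$, i.e.\ \eqref{final-1} along $(U_j)$. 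The main obstacle is precisely controlling this additivity in the regime where $b_j$ is comparable to $U_j$, so that $w(U_j,E,x)$ is \emph{not} negligible; it is the regular variation of index exactly $1$ that makes the record contribution $G(w(U_j,E,x))$ and the trimmed contribution $G(U_j-w(U_j,E,x))$ recombine into $G(U_j)$.
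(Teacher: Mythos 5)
Your proposal is correct and follows essentially the same route as the paper: the identical chain of \eqref{limit-fe}, Lemma \ref{lemma-step1} (i.e.\ \eqref{need-asymp}) and \eqref{relations-w} reduces the claim to showing $G\big(U_j-w(U_j,E,x)\big)+G\big(w(U_j,E,x)\big)\sim G(U_j)$. The only difference is that the paper concludes by citing Lemma \ref{lemma-somme} (proved in Appendix \ref{sec:svf} via Potter's bound \eqref{potter-bound}), whereas you re-derive that additivity inline by a subsequence argument; your version requires both summands to diverge, which you correctly verify (the trimmed sum satisfies $\tau^1_{_{E,x}}(j)\ge j-1$, and the running maximum of the unbounded return times diverges almost surely), while the paper's lemma also covers the case where only one diverges by using that $n\,L(n)$ is non-decreasing.
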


\begin{proof}
For the sequence $(U_j)$, by \eqref{limit-fe} and \eqref{need-asymp} we first write
\[
S_{U_j}f(x) = \sum_{n=1}^{j}\, (f^E \circ T^{n-1}_{_E})(x) \sim G(U_j-w(U_j,E,x)) + \max_{1\le k\le j}\, G\Big((\varphi_{_E}  \circ T^{k-1}_{_E})(x)\Big)\, .
\]
By \eqref{relations-w}
\[
w(U_j,E,x) = \max_{1\le k\le j}\, (\varphi_{_E}  \circ T^{k-1}_{_E})(x)
\]
and since $G(n)$ is non-decreasing it follows
\[
S_{U_j}f(x) \sim G(U_j-w(U_j,E,x)) + G(w(U_j,E,x)) \sim G(U_j)
\]
by Lemma \ref{lemma-somme}.
\end{proof}

\begin{lemma}\label{lemma-3}
The limit in \eqref{final-1} holds for a sequence $(N_j)$ such that $N_j \in \II_j$ for all $j$.
\end{lemma}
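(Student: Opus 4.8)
The plan is to exploit that on a non-record excursion the monotonicity of the Birkhoff sums, valid since $f\ge 0$, traps $S_Nf(x)$ between its values at the two surrounding return times, and then to show that these two values are asymptotically equal. Fix $x$ in the full-measure subset of $E$ on which the conclusions of Lemma \ref{lemma-2}, Lemma \ref{lemma-step1} and \eqref{limit-fe} hold, and take $N\in\II_j$. By the definition \eqref{int-min} the orbit makes no return to $E$ strictly between $U_j$ and $U_{j+1}$, so $U_j<N<U_{j+1}$ and $R_{_{E,N}}(x)-1=j$. Since $f\ge 0$ the sum $S_Nf(x)$ is non-decreasing in $N$, whence
\[
S_{U_j}f(x)\le S_Nf(x)\le S_{U_{j+1}}f(x),
\]
and because $G$ is non-decreasing also $G(U_j)\le G(N)\le G(U_{j+1})$.

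First I would invoke Lemma \ref{lemma-2}, which already gives $S_{U_j}f(x)\sim G(U_j)$ and $S_{U_{j+1}}f(x)\sim G(U_{j+1})$. Hence it suffices to prove that $G(U_{j+1})\sim G(U_j)$ along the non-record indices $j$ for which $\II_j\neq\emptyset$: once this is known, the two displayed chains squeeze both $S_Nf(x)$ and $G(N)$ between quantities that are asymptotically equal, yielding $S_Nf(x)\sim G(U_j)\sim G(N)$, which is \eqref{final-1} along $(N_j)$.

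The asymptotic $G(U_{j+1})\sim G(U_j)$ is where the non-record structure enters, and it is the only genuinely substantive step. Applying \eqref{limit-fe} at the two return times gives
\[
S_{U_{j+1}}f(x)\sim d(j+1)+\max_{1\le k\le j+1}\,G\big((\varphi_{_E}\circ T_{_E}^{k-1})(x)\big),\qquad
S_{U_j}f(x)\sim d(j)+\max_{1\le k\le j}\,G\big((\varphi_{_E}\circ T_{_E}^{k-1})(x)\big).
\]
The defining inequality of $\II_j$ reads $(\varphi_{_E}\circ T_{_E}^{j})(x)\le \max_{1\le k\le j}(\varphi_{_E}\circ T_{_E}^{k-1})(x)$, so the $(j+1)$-st excursion sets no new record and the two maxima coincide. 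Moreover $d$ is regularly varying of index $1$, as noted in the proof of Lemma \ref{lemma-step1}, so that $d(j+1)\sim d(j)$. Therefore the two right-hand sides differ only by $d(j+1)-d(j)=o(d(j))$, which gives $S_{U_{j+1}}f(x)\sim S_{U_j}f(x)$; combined with Lemma \ref{lemma-2} this is exactly $G(U_{j+1})\sim G(U_j)$.

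I expect the delicate point to be precisely this last step: one must be sure that the extra, non-record excursion alters neither the trimmed-sum term nor, asymptotically, the subtracted maximum. The maximum is controlled for free by the definition of $\II_j$, the $d$-term by regular variation of index $1$, and the positivity of $f$ supplies the monotonicity that legitimises the squeeze. I do not anticipate needing the finer expansion \eqref{first-step} here, since the record-setting excursions, where that formula becomes indispensable because the maximum genuinely jumps, are handled separately on the intervals $\JJ_j$ and $\JJ_j^\eta$.
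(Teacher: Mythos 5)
Your proof is correct, but it takes a genuinely different route from the paper's. The paper proves Lemma \ref{lemma-3} by substituting the non-record structure into the full decomposition \eqref{first-step}: it identifies $w(N_j,E,x)$ with the running maximum via \eqref{relations-w}, uses \eqref{need-asymp} to replace $d(R_{_{E,N_j}}(x)-1)$ by $G(N_j-w(N_j,E,x))$, disposes of the two boundary-excursion terms $G\bigl((\varphi_{_E}\circ T_{_E}^{R_{_{E,N_j}}(x)-1})(x)\bigr)$ and $G\bigl(\tau_{_{E,x}}(R_{_{E,N_j}}(x))-N_j+1\bigr)$ by observing they are dominated by the second maximum and applying \eqref{eq: M/d_n} with $r=2$, and finally reassembles $G(N_j-w)+G(w)\sim G(N_j)$ through the slowly-varying addition Lemma \ref{lemma-somme}. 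You instead sandwich $S_{N_j}f(x)$ between $S_{U_j}f(x)$ and $S_{U_{j+1}}f(x)$ using $f\ge 0$, invoke Lemma \ref{lemma-2} at both endpoints, and reduce everything to $G(U_{j+1})\sim G(U_j)$, which you obtain from \eqref{limit-fe} at indices $j$ and $j+1$: on a non-record excursion the two trimmed-sum maxima coincide, and $d(j+1)\sim d(j)$ by the regular variation of $d$ with index $1$ established in the proof of Lemma \ref{lemma-step1} (the absorption $d(j+1)+M\sim d(j)+M$ is legitimate since $M\ge 0$). Your squeeze is shorter and more elementary for this lemma --- it needs neither the second-maximum estimate \eqref{eq: M/d_n} nor Lemma \ref{lemma-somme} nor \eqref{first-step}, and it isolates exactly why non-record excursions are harmless: they freeze the trimmed-sum maximum. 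What the paper's heavier route buys is uniformity: the decomposition \eqref{first-step} is indispensable on the record intervals $\JJ_j$ and $\JJ_j^\eta$ (where your squeeze fails, since $G(U_{j+1})\not\sim G(U_j)$ in general), so the paper's argument for $\II_j$ is the same template reused in Lemmas \ref{lemma-4} and \ref{lemma-5}, a point you correctly anticipate at the end of your proposal.
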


\begin{proof}
By \eqref{int-min} and \eqref{relations-w}, for all $j$ it holds
\[
w(N_j,E,x) = \max_{1\le k\le R_{_{E,N_j}}(x) -1}\, (\varphi_{_E}  \circ T^{k-1}_{_E})(x)\, .
\]
Hence by \eqref{first-step} and \eqref{need-asymp} we have, since $G(n)$ is non-decreasing, 
\[
\begin{aligned}
S_{N_j}f(x) \sim &\,  G(N_j-w(N_j,E,x)) + G(w(N_j,E,x)) +\\[0.2cm]
& + G\Big(\Big(\varphi_{_E}\circ T_{_E}^{R_{_{E,N_j}}(x) -1}\Big)(x)\Big) - G\Big(\tau_{_{E,x}}(R_{_{E,N_j}}(x))-N+1\Big) \, .
\end{aligned}
\]
Moreover, since $(f^E\circ T_{_E}^{n-1})$ satisfies the assumptions of Lemma \ref{lemma-aar-nakada} with $W=1$, we apply \eqref{eq: M/d_n} to obtain
\[
G\Big(\Big(\varphi_{_E}\circ T_{_E}^{R_{_{E,N_j}}(x) -1}\Big)(x)\Big) = o(d(R_{_{E,N_j}}(x)))\, ,
\]
because by \eqref{int-min}
\[
G\Big(\Big(\varphi_{_E}\circ T_{_E}^{R_{_{E,N_j}}(x) -1}\Big)(x)\Big) \le G\Big(\max_{1\le k\le R_{_{E,N_j}}(x) -1}\, (\varphi_{_E}  \circ T^{k-1}_{_E})(x)\Big)
\]
so that $G\Big(\Big(\varphi_{_E}\circ T_{_E}^{R_{_{E,N_j}}(x) -1}\Big)(x)\Big)$ is the 2-nd maximum in 
\[
\left\{ G\Big((\varphi_{_E}  \circ T^{k-1}_{_E})(x)\Big)\, :\, k=1,\dots, R_{_{E,N_j}}(x) \right\}\, .
\]
Since $G(n)$ is non-negative, by using \eqref{need-asymp} and Lemma \ref{lemma-somme} the statement is proved.
\end{proof}

\begin{lemma}\label{lemma-4}
The limit in \eqref{final-1} holds for a subsequence $(N_j)$ such that $N_j \in \JJ_j^\eta$ for all $j$.
\end{lemma}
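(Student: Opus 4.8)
The plan is to feed the master asymptotic \eqref{first-step} into the record case and identify, among its four terms, which survive. Fix $x\in E$ and write $j+1=R_{_{E,N}}(x)$, so that $N\in(U_j,U_{j+1})$ with $U_i=\tau_{_{E,x}}(i)$; set $\rho:=(\varphi_{_E}\circ T_{_E}^{j})(x)=U_{j+1}-U_j$, $s:=N-U_j$ and $t:=U_{j+1}-N$, so $\rho=s+t$. Since $N\in\JJ_j^\eta$ the current excursion is a record by \eqref{int-max}, hence $\rho$ is the maximal return time among $(\varphi_{_E}\circ T_{_E}^{k-1})(x)$, $k=1,\dots,j+1$, and $G(\rho)$ is the largest of the values $G((\varphi_{_E}\circ T_{_E}^{k-1})(x))$. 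Consequently the middle term $\max_{1\le k\le j}G((\varphi_{_E}\circ T_{_E}^{k-1})(x))$ of \eqref{first-step} is the \emph{second} maximum of $(f^E\circ T_{_E}^{k-1})$, so by \eqref{eq: M/d_n} (applied to $f^E$, for which $W=1$, with $r=2$) it is $o(d(j))$ and may be discarded.

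The decisive step is to pin down $w(N,E,x)$. By \eqref{relations-w} one has $w(N,E,x)=\max\{\ell_2,\,s\}$ with $\ell_2:=\max_{1\le k\le j}(\varphi_{_E}\circ T_{_E}^{k-1})(x)$, whence $N-w(N,E,x)=\min\{N-\ell_2,\,U_j\}$. On a record excursion the trimmed return sum equals $U_j$ exactly, since $\tau^1_{_{E,x}}(j+1)=\tau_{_{E,x}}(j+1)-\rho=U_{j+1}-\rho=U_j$; writing $b$ for the asymptotic inverse of $(\alpha(n))$ of \eqref{alfa} (the normaliser of Lemma \ref{useful-2}, not the normaliser $d$ of \eqref{first-step}), this gives $U_j=\tau^1_{_{E,x}}(j+1)\sim b(j+1)$. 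On the other hand $\ell_2$ is precisely the second maximum of the return times over the first $j+1$ steps, so \eqref{eq: M/d_n} applied to $(\varphi_{_E}\circ T_{_E}^{k-1})$ (again $W=1$, $r=2$) yields $\ell_2=o(b(j+1))=o(U_j)$. Inserting this into $N-w(N,E,x)=\min\{U_j+(s-\ell_2),\,U_j\}$ shows $N-w(N,E,x)\sim U_j$ irrespective of the sign of $s-\ell_2$, and then \eqref{need-asymp} gives $d(j)\sim G(N-w(N,E,x))\sim G(U_j)$. Substituting these facts into \eqref{first-step} collapses it to
\[
S_Nf(x)\ \sim\ G(U_j)+G(\rho)-G(t+1).
\]

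It then remains to reassemble the right-hand side into $G(N)$, and this is exactly where the restriction to $\JJ_j^\eta$ is used. From $t\le\eta s$ and $\rho=s+t$ one gets $(t+1)/\rho\le\eta+o(1)<\tfrac12$, so $t+1$ stays bounded away from $\rho$; since $G(n)=nL(n)$ is regularly varying of index $1$, the uniform convergence of $L$ then gives $G(\rho)-G(t+1)\sim G(\rho-t-1)=G(s-1)\sim G(s)$. (This subtraction would degenerate into a near-cancellation for $N\in\JJ_j\setminus\JJ_j^\eta$, which is precisely why such $N$ are deferred to a separate argument.) Finally, applying Lemma \ref{lemma-somme} to the two non-negative summands yields $S_Nf(x)\sim G(U_j)+G(s)\sim G(U_j+s)=G(N)$, as required. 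I expect the main obstacle to be the middle paragraph: establishing $N-w(N,E,x)\sim U_j$, for which the exact identity $U_j=\tau^1_{_{E,x}}(j+1)$ on a record excursion together with the second-maximum estimate $\ell_2=o(U_j)$ does the essential work, while the slowly varying subtraction in the last step is routine once the $\eta$-bound is in force.
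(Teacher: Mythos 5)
Your proof is correct, and its skeleton is the same as the paper's: start from \eqref{first-step}, show the first two terms are $\sim G(U_j)$, show the excursion contribution $G(\rho)-G(t+1)$ is $\sim G(N_j-U_j)$, then recombine with Lemma \ref{lemma-somme}. Where you differ is in how the two sub-steps are justified. For the first two terms the paper simply feeds \eqref{limit-fe} into Lemma \ref{lemma-2} (they equal the asymptotics of $S_{U_j}f(x)\sim G(U_j)$); you instead exploit the record structure: the middle term is a second maximum of the $f^E$-sequence, hence $o(d(j))$ by \eqref{eq: M/d_n}, and $d(j)\sim G(U_j)$ follows from \eqref{need-asymp} once you pin down $N_j-w(N_j,E,x)\sim U_j$ via the exact identity $\tau^1_{_{E,x}}(j+1)=U_j$ together with the second-maximum bound $\ell_2=o(b(j+1))$ from Lemma \ref{useful-2} and \eqref{eq: M/d_n}. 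This is longer than the paper's one-line citation but sound; it yields the clean by-product $N_j-w(N_j,E,x)\sim U_j$ on record excursions, and you are right to separate the two distinct normalisers that the paper denotes by the same letter $d$. For the subtraction step, the paper proves \eqref{split1} with Lemma \ref{lemma-somme} and then deduces \eqref{split2} by an $\epsilon$-argument using monotonicity of $G$ and $t+1\le s-1$ (this is where $\eta<\tfrac12$ enters); you appeal to regular variation instead. Here your justification has a small hole: uniform convergence of the slowly varying $L$ holds on compact ratio-sets of $(0,\infty)$, but along a subsequence with $t\to\infty$ and $(t+1)/\rho\to 0$ the ratio leaves every such compact set and $L(t+1)/L(\rho)$ need not tend to $1$. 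To cover that regime you need either Potter's bound \eqref{potter-bound} (which gives $G(t+1)=o(G(\rho))$ there), or the uniform convergence theorem for index-$1$ regularly varying functions on intervals $(0,b]$, or most economically the paper's own Lemma \ref{lem: an bn asym1} (used in Lemma \ref{lemma-5}), which yields $G(\rho)-G(t+1)\sim(s-1)L(\rho)$ with no separation hypothesis at all, after which $L(\rho)\sim L(s-1)$ does follow from compact-set uniformity since $1\le\rho/(s-1)\le(1+\eta)(1+o(1))$. This is a one-line repair rather than a flaw in the strategy, and your diagnosis of why $\JJ_j^\eta$ is needed --- keeping $G(\rho)-G(t+1)$ away from near-cancellation --- is exactly the point of the paper's case split.
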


\begin{proof}
Looking at \eqref{first-step}, we first consider the last terms on the right-hand side. By the definition of $(U_j)$ and \eqref{returns}
\begin{equation}\label{translation}
\begin{aligned}
& \Big(\varphi_{_E}\circ T_{_E}^{R_{_{E,N_j}}(x) -1}\Big)(x) = \Big(\varphi_{_E}\circ T_{_E}^{R_{_{E,U_j}}(x) -1}\Big)(x) = (\varphi_{_E}\circ T_{_E}^{j})(x) = U_{j+1}- U_j\, ,  \\[0.2cm]
& \tau_{_{E,x}}(R_{_{E,N_j}}(x))-N_j = \tau_{_{E,x}}(R_{_{E,U_j}}(x))-N_j = \tau_{_{E,x}}(j+1)-N_j = U_{j+1}-N_j
\end{aligned}
\end{equation}
and by \eqref{int-max}, the sequence $U_{j+1}-U_j$ is diverging. Moreover writing
\[
(\varphi_{_E}\circ T_{_E}^{j})(x) = \Big(U_{j+1}-N_j+1\Big) + \Big(N_j-U_j-1\Big)  
\]
and using that $N_j\in \JJ_j^\eta$, letting $a_j := U_{j+1}-N_j+1$ and $b_j := N_j-U_j-1$, we have that $b_j$ is diverging by \eqref{int-max-eta}. Then by Lemma \ref{lemma-somme} we have
\begin{equation} \label{split1}
G\Big(\Big(\varphi_{_E}\circ T_{_E}^{R_{_{E,N_j}}(x) -1}\Big)(x)\Big) \sim G\Big(U_{j+1}-N_j+1\Big) + G\Big(N_j-U_j-1\Big).
\end{equation}
Moreover, we use \eqref{split1} and the fact that $G(n)=nL(n)$ with $L$ slowly varying to prove that
\begin{equation}\label{split2}
G\Big(\Big(\varphi_{_E}\circ T_{_E}^{R_{_{E,N_j}}(x) -1}\Big)(x)\Big) - G\Big(U_{j+1}-N_j+1\Big) \sim G\Big(N_j-U_j-1\Big)\, .
\end{equation}
In fact, by \eqref{split1} for all $\epsilon >0$ there exists $\bar{j}\in \N$ such that for all $j\ge \bar{j}$ we have
\[
1-\epsilon \le \frac{G\Big(\Big(\varphi_{_E}\circ T_{_E}^{R_{_{E,N_j}}(x) -1}\Big)(x)\Big)}{G\Big(U_{j+1}-N_j+1\Big) + G\Big(N_j-U_j-1\Big)} \le 1+\epsilon.
\]
Since $N_j-U_j\ge 1$ and $\eta\in (0,\frac 12)$, for $N_j\in \JJ_j^\eta$ we have
\[
U_{j+1}-N_j+1 \le \frac 12 (N_j-U_j) +1 \le N_j-U_j-1\, ,
\]
hence $G(U_{j+1}-N_j+1)\le G(N_j-U_j-1)$ because $G$ is non-decreasing. It follows that for $j\ge \bar{j}$
\[
1-2\epsilon\le \frac{G\Big(\Big(\varphi_{_E}\circ T_{_E}^{R_{_{E,N_j}}(x) -1}\Big)(x)\Big) - G\Big(U_{j+1}-N_j+1\Big)}{G\Big(N_j-U_j-1\Big)} \le 1+2\epsilon\, .
\]
By the arbitrariness of $\epsilon$ we have proved \eqref{split2}.

Using \eqref{limit-fe} and Lemma \ref{lemma-2} for the first two terms on the right-hand side of \eqref{first-step}, and \eqref{split2} for the last difference of \eqref{first-step}, we have
\[
S_{N_j}f(x) \sim G(U_j) + G(N_j-U_j-1)\, .
\]
Finally Lemma \ref{lemma-somme} and the fact that $N_j-U_j$ is diverging imply $S_{N_j}f(x) \sim G(N_j)$ and the lemma is proved.
\end{proof}

\begin{lemma}\label{lemma-5}
The limit in \eqref{final-1} holds for a subsequence $(N_j)$ such that $N_j \in \JJ_j \setminus \JJ_j^\eta$ for all $j$.
\end{lemma}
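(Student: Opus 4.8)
The plan is to exploit the decomposition of the Birkhoff sum at the last return before $N_j$, together with the slowly varying structure of $G$. Fix $x$ in the full-measure set on which Lemma~\ref{lemma-2}, Lemma~\ref{lem-bound-wt} and \eqref{limit-fe} all hold. Since $N_j\in\JJ_j\subset(U_j,U_{j+1})$, writing $M_j:=U_{j+1}-U_j=(\varphi_{_E}\circ T_{_E}^{j})(x)$ for the length of the current (record) excursion and $p_j:=N_j-U_j$, I would split
\[
S_{N_j}f(x)=S_{U_j}f(x)+\sum_{k=U_j+1}^{N_j}(f\circ T^{k-1})(x),
\]
use Lemma~\ref{lemma-2} to get $S_{U_j}f(x)\sim G(U_j)$, and evaluate the partial-excursion sum exactly as in the computation behind \eqref{first-step}, obtaining $G(M_j)-G(M_j-p_j+1)$ up to the bounded constant $f_0$. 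Thus it remains to prove $G(U_j)+G(M_j)-G(M_j-p_j+1)\sim G(N_j)=G(U_j+p_j)$.

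Two preliminary estimates drive the argument. First, the defining inequality of $\JJ_j\setminus\JJ_j^\eta$ (see \eqref{int-max} and \eqref{int-max-eta}) reads $U_{j+1}-N_j>\eta(N_j-U_j)$, which gives $M_j-p_j+1\ge\frac{\eta}{1+\eta}M_j$; hence the unelapsed part of the excursion is comparable to $M_j$. Since $G(t)=tL(t)$ with $L$ normalised slowly varying, so that $G'(t)\sim L(t)$ and $L$ is asymptotically constant over $[M_j-p_j+1,M_j]$, I would deduce $G(M_j)-G(M_j-p_j+1)\sim p_jL(M_j)$ when $p_j\to\infty$, the term being $O(L(M_j))$ and hence negligible when $p_j$ stays bounded. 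Second, Lemma~\ref{lem-bound-wt} applied at time $N_j$ bounds the maximal excursion seen so far, giving $M_j\le N_j\,\xi(N_j)$ for all large $j$; note also that $M_j\to\infty$ because the records of $(\varphi_{_E}\circ T_{_E}^{k-1})$ diverge.

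I would then argue by a case split on the size of the excursion relative to $N_j$. Given $\epsilon>0$, fix a small $\delta>0$. For those $j$ with $M_j\le\delta N_j$ one has $p_j\le M_j=o(N_j)$, so $U_j\sim N_j$ and $L(U_j)\sim L(N_j)$, while Potter's bounds yield $G(M_j)/G(N_j)\le A(M_j/N_j)^{1/2}\le A\delta^{1/2}$; hence $S_{N_j}f(x)/G(N_j)$ sits within $\epsilon$ of $1$ once $\delta$ is chosen small. For those $j$ with $M_j>\delta N_j$, I write $M_j=N_j\,\tilde\xi(N_j)$ with $\delta\le\tilde\xi(N_j)\le\xi(N_j)$ and invoke assumption (ii)-(c) to obtain $L(M_j)\sim L(N_j)$; then $S_{N_j}f(x)\sim G(U_j)+p_jL(N_j)$, and subtracting this from $G(N_j)=U_jL(N_j)+p_jL(N_j)$ leaves the error $U_j\big(L(N_j)-L(U_j)\big)$, which is $o(G(N_j))$ when $U_j\asymp N_j$ (slow variation); when instead $U_j=o(N_j)$, Potter's bounds give $G(U_j)=o(G(N_j))$ directly and $p_j\sim N_j$, so again $S_{N_j}f(x)\sim p_jL(N_j)\sim G(N_j)$. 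Since these two classes exhaust the subsequence and $\epsilon$ is arbitrary, \eqref{final-1} holds along $(N_j)$.

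The main obstacle is the control of $L$ across the range between $N_j$ and the record length $M_j$: a fresh record excursion may overshoot the current time by the whole slowly varying factor $\xi(N_j)$, so $M_j$ need not be comparable to $N_j$. Reconciling $L(M_j)$ with $L(N_j)$ in this regime is precisely what the a.s.\ bound $M_j\le N_j\,\xi(N_j)$ from Lemma~\ref{lem-bound-wt} and hypothesis (ii)-(c) are tailored for, while the opposite regime $M_j=o(N_j)$ requires Potter's estimates to discard the partial-excursion contribution. The delicate transitional case, in which $U_j$, $p_j$ and $M_j$ are all of the same order, is where one cannot rely on a single dominating term and must keep track of both leading coefficients at once.
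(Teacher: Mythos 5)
Your proof is correct, and its skeleton is the paper's: decompose at the last return $U_j$, use Lemma~\ref{lemma-2} to get $S_{U_j}f(x)\sim G(U_j)$, estimate the partial-excursion increment by $p_jL(M_j)$, and reconcile $L(M_j)$ with the relevant time scale through Lemma~\ref{lem-bound-wt} and assumption (ii)-(c). Where you genuinely diverge is the organisation of the case analysis. The paper splits according to the limit of $M_j/U_j$ (its sets $\LL^0_j$ and $\LL_j$): when $M_j/U_j\to 0$ it sandwiches $S_{U_j}f\le S_{N_j}f\le S_{U_{j+1}}f$ using $G(U_j)\sim G(U_{j+1})$; when $M_j/U_j\to\ell>0$ it applies Lemma~\ref{lem-bound-wt} at time $U_j$, uses (ii)-(c) with base point $U_j$, needs it a second time (for $L(N_j-U_j)\sim L(U_j)$), treats $\ell=\infty$ separately, and closes with Lemma~\ref{lemma-somme}. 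You split instead at a fixed threshold $M_j\lessgtr\delta N_j$: below it, Potter's bound $G(M_j)\le A\delta^{1/2}G(N_j)$ makes the whole excursion contribution uniformly small (no sandwich, only an $O(\delta^{1/2})$ error removed afterwards by letting $\delta\downarrow 0$); above it, you apply Lemma~\ref{lem-bound-wt} at time $N_j$ --- legitimate, since $R_{_{E,N_j}}(x)=j+1$, so the running record $M_j$ is included in the maximum --- use (ii)-(c) once with base point $N_j$, and finish with the identity $G(N_j)=U_jL(N_j)+p_jL(N_j)$ plus uniform slow variation or Potter according to whether $U_j$ is comparable to $N_j$ or $U_j=o(N_j)$. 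The net effect: a single application of (ii)-(c), no need for Lemma~\ref{lemma-somme} or a separate $\ell=\infty$ argument, at the price of a quantitative rather than exact-asymptotic small-excursion case. Note also that you exploit the defining inequality of $\JJ_j\setminus\JJ_j^\eta$ to get $M_j-p_j+1\ge\tfrac{\eta}{1+\eta}M_j$ and run a uniform-convergence argument for the increment, where the paper's Lemma~\ref{lem: an bn asym1} yields the same conclusion without any comparability hypothesis. Two harmless imprecisions: in the small-excursion case ``$U_j\sim N_j$'' should read $U_j\ge(1-\delta)N_j$ (the quantitative statement is what you actually use), and invoking (ii)-(c) literally requires extending $\tilde\xi$ from the subsequence $(N_j)$ to all integers --- a point the paper's own proof glosses over in exactly the same way.
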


\begin{proof}
As in the proof of Lemma \ref{lemma-4} we need to study the last two terms on the right-hand side of \eqref{first-step}, for which we recall \eqref{translation}. For the first two terms, Lemma \ref{lemma-2} shows that they are asymptotic to $G(U_j)$.

To estimate the third and fourth term of \eqref{first-step} we aim to apply Lemma \ref{lem: an bn asym1}, for which we use that $U_{j+1}-U_j$ tends to infinity. Moreover we now need to consider two cases. Let us introduce the sets
\[
\LL^0_j := \left\{ N\in \JJ_j \, :\, \lim_{j\to \infty}\, \frac{U_{j+1}-U_j}{U_j}= 0\right\}
\]
and
\[
\LL_j := \left\{ N\in \JJ_j \, :\, \exists\, \ell\in (0,\infty] \, \text{ s.t.}\, \lim_{j\to \infty}\, \frac{U_{j+1}-U_j}{U_j}= \ell \right\}\, .
\]
It is clear that for all sequences $(N_j)$ there exists a subsequence in $\LL^0_j$ or in $\LL_j$. Thus we restrict ourselves to the cases $N_j\in \LL^0_j$ for all $j$, or $N_j\in \LL_j$ for all $j$.

First let $N_j \in (\JJ_j \setminus \JJ_j^\eta)\cap \LL^0_j$ for all $j$.
Then $U_{j}\sim U_{j+1}$ and $N_j\in (U_j, U_{j+1})$ imply $G(U_j)\sim G(U_{j+1})\sim G(N_j)$ and Lemma \ref{lemma-2} and the fact that $S_{U_j}f(x)<S_{N_j}f(x)<S_{U_{j+1}}f(x)$ imply that the limit \eqref{final-1} follows for the sequences in $\LL^0_j$.

Let now $N_j \in (\JJ_j \setminus \JJ_j^\eta)\cap \LL_j$ for all $j$ and let $\ell\in (0,\infty)$ be the limit of $(U_{j+1}-U_j)/U_j$. Notice that in this case we also have $\lim_{j} (U_{j+1}-N_j) =\infty$. A similar argument works for the case $\ell=\infty$.
% then for a fixed $\epsilon >0$ there exists $\bar j$ such that $U_{j+1}-U_j < \epsilon U_j$ for all $j\ge \bar j$. Together with $U_{j+1}-N_j > \eta (N_j-U_j)$ for $N_j \in \JJ_j \setminus \JJ_j^\eta$, it implies
% \[
% 0< N_j - U_j < \frac{1}{1+\eta} ( U_{j+1}-U_j ) \le \frac{\epsilon}{1+\eta} \, U_j\, ,\quad \forall\, j\ge \bar j\, .
% \]
% Since $G(n)=n\, L(n)$ is non-decreasing and $L$ is slowly varying, it follows
% \[
% \begin{aligned}
% G(U_j) + G(U_{j+1}-U_j) & \le G(U_j) + G(\epsilon U_j) \le G(N_j) + G(\epsilon N_j) = N_j\, L(N_j) + \epsilon N_j\, L(\epsilon N_j) \apprle \\[0.2cm]
% & \apprle (1+\epsilon)\, G(N_j)\, ,
% \end{aligned}
% \]
% and in the other direction
% \[
% G(U_j) \ge G \Big( \frac{1}{1+\frac{\epsilon}{1+\eta}}\, N_j\Big) = \frac{1}{1+\frac{\epsilon}{1+\eta}}\, N_j\, L \Big( \frac{1}{1+\frac{\epsilon}{1+\eta}}\, N_j\Big) \sim \frac{1}{1+\frac{\epsilon}{1+\eta}}\, G(N_j)\, .
% \]
% Since from \eqref{first-step} we have
% \[
% G(U_j) \le G(U_j) + G(U_{j+1}-U_j) - G(U_{j+1}-N_j+1) \sim S_{N_j} f(x) \apprle G(U_j) + G(U_{j+1}-U_j)
% \]
% from the arbitrariness of $\epsilon$, the limit \eqref{final-1} follows for the sequences in $\LL^0_j$.
% The fact that $N_j \in \JJ_j \setminus \JJ_j^\eta$ implies
% \[
% \frac{U_{j+1}-N_j+1}{U_{j+1}-U_j} = \frac{U_{j+1}-N_j+1}{(U_{j+1}-N_j)+(N_j-U_j)} \ge \frac{U_{j+1}-N_j+1}{(U_{j+1}-N_j)(1+\frac 1 \eta)} \ge \frac \eta{1+\eta}\, ,
% \]
Letting $a_j := U_{j+1}-U_j$ and $b_j:= U_{j+1}-N_j+1$, we apply Lemma \ref{lem: an bn asym1} to write
\begin{equation}\label{serve-1}
G(U_{j+1}-U_j) - G(U_{j+1}-N_j+1) \sim (N_j-U_j-1)\, L(U_{j+1}-U_j)\, .
\end{equation}
To study the term $ L(U_{j+1}-U_j)$, first by Lemma \ref{lem-bound-wt} for $\mu$-a.e.\ $x\in E$ we have for $j$ large enough
\begin{equation}\label{new2}
U_{j+1}-U_j = \max_{1\le k \le R_{_{E,U_j}}(x)} \, (\varphi_{_E}\circ T_{_E}^{k-1})(x) \le U_j\, \xi(U_j)\, .
\end{equation}
Then we want to use assumption (ii)-(c) to obtain the asymptotic behaviour of $L(U_{j+1}-U_j)$. 
%but we need that $(U_{j+1}-U_j)/U_j$ is bounded from below. 

Since $N_j \in (\JJ_j \setminus \JJ_j^\eta)\cap \LL_j$, for all $\epsilon>0$ there exists $j$ big enough such that $U_{j+1}-U_j\ge (\ell -\epsilon) U_j$. From \eqref{new2} it follows that $\tilde \xi(U_j) := (U_{j+1}-U_j)/U_j$ satisfies the conditions of assumption (ii)-(c), hence $L(U_{j+1}-U_j) \sim L(U_j)$. From \eqref{serve-1} and Lemma \ref{lemma-2} we have
\begin{equation}\label{serve-2}
S_{N_j}f(x) \sim G(U_j) + (N_j-U_j-1)\, L(U_j)\, .
\end{equation}
At this point, if
\[
\liminf_{j\to \infty}\, \frac{N_{j}-U_j}{U_j}= 0\, ,
\]
for all $\epsilon>0$ and for $j$ big enough we have, up to the choice of a subsequence, $0< N_{j}-U_j <\epsilon U_j$, and thus
\[
G(U_j) + (N_j-U_j-1)\, L(U_j) \le G(U_j) + \epsilon\, U_j\, L(U_j) \le (1+\epsilon)\, G(N_j)\, ,
\]
and in the other direction
\[
G(U_j) \ge G\Big( \frac{1}{1+\epsilon}\, N_j\Big) \apprge \frac{1}{1+\epsilon}\, G(N_j)\, .
\]
Hence by the arbitrariness of $\epsilon$, we find $S_{N_j}f(x) \sim G(N_j)$ in \eqref{serve-2}.

Instead, if
\[
\liminf_{j\to \infty}\, \frac{N_{j}-U_j}{U_j}= \bar \ell >0\, ,
\]
then for all $\epsilon>0$ and for $j$ big enough we have $N_{j}-U_j \ge (\bar \ell -\epsilon) U_j$. Hence letting $\tilde \xi(U_j):= (N_{j}-U_j)/U_j$, we can apply assumption (ii)-(c) since
\[
\bar \ell -\epsilon \le \tilde \xi(U_j) \le \frac{U_{j+1}-U_j}{U_j} \le \xi(U_j)\, .
\]
Thus it follows that $L(N_{j}-U_j) \sim L(U_j)$, and in \eqref{serve-2} we find
\[
S_{N_j}f(x) \sim G(U_j) + (N_j-U_j-1)\, L(N_j-U_j) \sim G(U_j) + G(N_j-U_j) \sim G(N_j)
\]
by applying Lemma \ref{lemma-somme} and the fact that $(U_j)$ and $(N_j-U_j)$ are diverging sequences.

This concludes the proof of the lemma.
\end{proof}

From Lemmas \ref{lemma-2} - \ref{lemma-5} the limit in \eqref{final-1} holds and the theorem is proved for functions $f$ constant on $E_n$.

In the general case, for a function $f:X\to \R_{\ge 0}$ satisfying assumption (ii) we obtain that
\begin{equation} \label{final-gen}
N\, L_1(N) \sim S_N g_1(x) \le S_N f(x) \le S_N g_2(x) \sim N\, L_2(N)
\end{equation}
for $\mu$-a.e.\ $x$, since we can apply Lemmas \ref{lemma-2} - \ref{lemma-5} to $g_1,g_2$. Moreover, for $\mu$-a.e.\ $x\in A_n$ we have $g_1^E(n) \le f^E(x) \le g_2^E(n)$ for all $n\in \N$. Since $g_1^E(n) \sim g_2^E(n)$ by assumption (ii)-(b), the function $G(n)$ which is asymptotically equivalent to $f^E$ is well-defined and satisfies $G(n) \sim g_i^E(n)=nL_i(n)$ for $i=1,2$. Thus by \eqref{final-gen} we obtain $S_Nf(x) \sim G(N)$ for $\mu$-a.e.\ $x$ and the theorem is proved.
\qed

\section{Proofs of the statements from Section \ref{sec-examples}} \label{app-minor}

\noindent \textbf{Proof of Proposition \ref{prop-2dfarey}}. In the proof of \cite[Theorem 3.1]{mio} it is shown that $(E,S_{_E})$ is a fibred system with respect to the level sets of the return time function $\varphi_{_E}$, hence it is $\psi$-mixing. It remains to show that $\sum_{n\ge 1}\, \psi(n)/n <\infty$.

Using \eqref{stima-psi} we need to study the sequences $\sigma(k)$ and $\gamma(k)$. Condition (h4) is satisfied with $N=1$ and $U_1=E$, so that $\gamma(k) \equiv 0$. Moreover it is proved in \cite{mio} (see the proof of Proposition 3.9 and the proof that (h3) holds for $(E,S_{_E})$), that there exists a constant $C>0$ such that $\sigma(k) \le C\, d(k)$, where $d(k)$ is defined by
\[
d(k) = \sum_{j=0}^2
        \left(\abs{\frac{f_{k+j+2}}{f_{k+6}}-\frac{f_{k+j+1}}{f_{k+5}}}
          + \abs{\frac{f_{k+j+2}}{f_{k+6}}-\frac{f_{k+j}}{f_{k+4}}}\right)
\]
being $(f_k)$ the sequence recursively defined as
\[
f_0=0,\, f_1=1,\, f_2=0,\quad f_{k+3}=f_{k+2}+f_k,\, \,  \forall\, k\ge 0\,.
\]
Then given the distinct roots $\lambda, \mu, \bar\mu$ of the polynomial $p(t)=t^3-t^2-1$, with $\lambda >1$ and $|\mu|^2=|\bar \mu|^2= \lambda^{-1}<1$, there exist constants $c_1,c_2,c_3\in \C$ such that
\[
f_k = c_1\, \lambda^k + c_2\, \mu^k + c_3\, \bar\mu^k\, , \quad \forall\, k\ge 0\, .
\]
It follows that $d(k) = O(\lambda^{-k})$, so that $\sigma(k)=O(\lambda^{-k})$, and finally $\sum_{n\ge 1}\, \psi(n)/n <\infty$. \qed
   
\vskip 0.5cm
\noindent \textbf{Proof of Proposition \ref{notelle1-linear}}. We show that the assumptions of Theorem \ref{th-notelle1} are satisfied with $G(N)\sim N$ as $N\to \infty$. It is clear that assumptions (ii)-(a), (b) and (c) are satisfied by (c1). In addition, since $\Gamma_i(k)\sim k$ we can argue as in the proof of Lemma \ref{useful-2} to show that (c2) implies (ii)-(d).

Thus it remains to show that (ii)-(e) is satisfied. First, in this case we have $\alpha(n) \sim n/\ell(n)$, hence \eqref{cond-uffa} becomes $\ell(N/\ell(N))\sim \ell(N)$. Finally, using that $\ell(n)$ is an increasing sequence which diverges, for all $\delta\in [0,1]$ we have the inequalities
\[
\ell\Big(n\, \ell^\delta(n) \Big) \ge \ell(n)
\]
and
\[
\ell\left(\frac{n\, \ell^\delta(n)}{\ell(n\, \ell^\delta(n))} \right) \le \ell\left( \frac{n\, \ell(n)}{\ell(n)}\right) = \ell(n)\, .
\]
Thus by (c3)
\[
\ell(n)\le \ell\Big(n\, \ell^\delta(n) \Big) \sim \ell\left(\frac{n\, \ell^\delta(n)}{\ell(n\, \ell^\delta(n))} \right) \le \ell(n)
\]
and we have shown that
\begin{equation} \label{altro-num}
\lim_{N\to \infty}\, \frac{\ell(n\, \ell^\delta(n))}{\ell(n)} = 1
\end{equation}
for all $\delta\in [0,1]$. This can be easily made uniform in $\delta$, proving that $\ell$ is super-slowly varying at infinity with rate function itself (see Definition \ref{def-ssv}). However we need this property of $\ell$ only in the proof of Lemma \ref{lemma-step1} to apply \cite[Cor. 2.3.4]{reg-var-book}, for which we only need \eqref{altro-num} with $\delta=1$.

Thus all the assumptions of Theorem \ref{th-notelle1} are satisfied with $G(N) \sim g_i^E(N) \sim N$. \qed

\vskip 0.5cm
\noindent \textbf{Proof of Proposition \ref{notelle1-counterex}}. All assumptions of Theorem \ref{th-notelle1} except for (ii)-(e) are satisfied, hence we can make use of all the results leading to the proof of Theorem \ref{th-notelle1} except for  Lemma \ref{lemma-step1}. 
 
We will only show the divergence result of this proposition for the subsequence $(U_j)$ fulfiling $U_j=\tau_{_{E,x}}(j)$ for $j\ge 1$. By \eqref{f-vs-fe} and \eqref{limit-fe} we have for this subsequence  
 \begin{equation}\label{eq: notconvsum}
  \sum_{n=1}^{U_j}\, (f \circ T^{n-1})(x) \sim d(R_{_{E,U_j}}(x) -1) + \max_{1\le k\le R_{_{E,U_j}}(x) -1}\, (f^E \circ T^{k-1}_{_E})(x).
 \end{equation}
 To estimate the first summand we obtain by the same argumentation as in Lemma \ref{lemma-step1} that $d(R_{_{E,U_j}} (x))\sim d(\alpha(U_j -w(U_j, E, x)))$. We may use again $\mu(A_{\ge\Gamma_n})\sim \log n/n$ with which we calculate $a(n)\sim 2\,n/\log^2(n)$ with $a$ as in \eqref{alfa-new}. Hence, we obtain for its asymptotic inverse function $d(n)\sim n\, \log^2(n)/2$.
 Using then $\alpha(n)\sim n/\log n$ from above and \eqref{fine-2} implies  
 \begin{equation}\label{eq: notconvsum1}
  d(R_{_{E,U_j}} (x)-1)\sim \frac{1}{2}\,(U_j -w(U_j, E, x))\, \log(U_j -w(U_j, E, x)).
 \end{equation}

On the other hand, for the second summand of \eqref{eq: notconvsum} we obtain from 
\[
(f^{E}\circ T_{_E}^{k-1})(x) \sim (\varphi_{_E}\circ T_{_E}^{k-1})(x)\, \log (\varphi_{_E}\circ T_{_E}^{k-1})(x)\, ,
\]
with $\sim$ in the meaning that $(f^{E}\circ T_{_E}^{k-1})(x)$ large, and \eqref{relations-w} that 
 \begin{equation}\label{eq: notconvsum2}
  \max_{1\le k\le R_{_{E,U_j}}(x) -1}\, (f^E \circ T^{k-1}_{_E})(x)\sim  w(U_j,E,x)\, \log w(U_j,E,x).
 \end{equation}
From Theorem \ref{th-elle1-bis} and the fact that $\alpha$ is regularly varying with index $1$ we can conclude that there exist $u_1'<u_2'$ such that for $\mu$-a.e.\ $x\in X$ we have 
\[
\liminf_j w(U_j,E,x)/U_j<u_1' < u_2' <\limsup_j w(U_j,E,x)/U_j \, .
\] 
Indeed if the limit of $w(U_j,E,x)/U_j$ existed, the result of Theorem \ref{th-elle1-bis} would contradict \cite[Thm. 2.4.2]{aa-book}. Hence, \eqref{eq: notconvsum}, \eqref{eq: notconvsum1} and \eqref{eq: notconvsum2} together imply the statement of Proposition \ref{notelle1-counterex}.  \qed

\appendix

\section{Proof of Lemma \ref{lemma-aar-nakada}} \label{sec:lemma-aar-nakada}
 Lemma \ref{lemma-aar-nakada} is an equivalent formulation to \cite[Theorem 1.1]{aar-nakada}. Instead of the condition in \eqref{eq: cond AN} this theorem asks for the following conditions to hold:
 $J_r\coloneqq \sum_{n=1}^{\infty}\epsilon(n)^r/n<\infty$ with 
 $\epsilon(n)=n\, \left(\log^+L\right)'(n)$ and $L(t)=\mathbb{E}\left( \min\{t, Y_1\}\right)$.
 We have that
 \begin{align*}
  L(t)&=\int_0^t x\mathrm{d}F(x)+ t\, \left(1-F(t)\right)
  = t\, F(t)- \int_0^t F(x)\mathrm{d}x+ t\, \left(1-F(t)\right)
  = \int_0^t (1- F(x))\mathrm{d}x.
 \end{align*}
 Furthermore,
 \begin{align*}
  \left(\log^+L\right)'(t)= \frac{L'(t)}{L(t)}=\frac{1-F(t)}{\int_0^t (1- F(x))\mathrm{d}x}\quad\text{ and }\quad \epsilon(t)=\frac{t\, \left(1-F(t)\right)}{\int_0^t (1- F(x))\mathrm{d}x}.
 \end{align*}
 Thus, the condition $J_r\coloneqq \sum_{n=1}^{\infty}\epsilon(n)^r/n<\infty$ and \eqref{eq: cond AN} are equivalent. 
 
Moreover, in \cite{aar-nakada} the norming sequence $(d(n))$ is set to be the inverse of 
$a(t)=t/L(t)= t/\int_0^t (1-F(x))\mathrm{d}x$. Since by the remark before Theorem 1.1 in \cite{aar-nakada}, $L$ is slowly varying (see Appendix \ref{sec:svf}), $\lim_{t\to\infty} t/L(t)=\infty$. Moreover, one can easily verify that $a'(t)>0$ on an interval $[K,\infty)$ and thus one can consider $a(t)$ as an invertible function. 
 
Equation \eqref{eq: M/d_n} follows from Theorem 1.1.(ii) and its following remark in \cite{aar-nakada}.
\qed

\section{Slowly varying functions} \label{sec:svf}
In this section we collect the results on slowly varying functions used in the proof of Theorem \ref{th-notelle1}. We recall that a function $L :\R_{>0} \to \R$ is called \emph{regularly varying (at infinity) with index $\gamma\in \R$} if for all $r>0$ it holds
\[
\lim_{x\to \infty}\, \frac{L(rx)}{L(x)} = r^\gamma\, .
\]
If $\gamma=0$, $L$ is called a \emph{slowly varying (at infinity)}. We refer to \cite{reg-var-book} for more details. By Karamata's representation theorem (see for example \cite[Thm.~1.3.1]{reg-var-book}) we have that each slowly varying function $L$ can be written as 
\begin{align}
L(x)=c(x)\, \exp\left(\int_{\kappa}^x\frac{\eta(t)}{t}\mathrm{d}t\right),\label{eq:karamata}
\end{align}
where $c(x)$ tends to a constant $C$ as $x\to \infty$, $\kappa\geq 0$ and $\eta(x)$ tends to zero as $x\to \infty$, and a function $L$ is called \emph{normalized slowly varying} if it can be written as 
\begin{align}
 L(x)=C\, \exp\left(\int_{\kappa}^x\frac{\eta(t)}{t}\mathrm{d}t\right).\label{eq:karamata-norm}
\end{align}
It is immediately clear that for each slowly varying function $L$ there exists a normalised slowly varying function $\widetilde{L}$ such that $L(x)\sim \widetilde{L}(x)$. 

In the proof of our results we need \emph{Potter's bound}, see for example \cite[Thm. 1.5.6]{reg-var-book}. If $L$ is a slowly varying function at infinity, then for all constants $\delta>0$ and $A>1$ there exists $C=C\left(\delta, A\right)$ such that for all $x,y\geq C$ we have
 \begin{equation} \label{potter-bound}
\frac{L\left(x\right)}{L\left(y\right)} \le A\, \max\left\{ \left( \frac{x}{y} \right)^\delta\, , \left( \frac{y}{x} \right)^{\delta} \right\}.
 \end{equation}

\begin{lemma} \label{lemma-somme}
If $(a_n)$ and $(b_n)$ are two sequences of non-negative reals, at least one of them diverging, and $L$ is non-negative and slowly varying such that $(n\, L(n))$ is non-decreasing, then
\[
a_n\, L(a_n) + b_n\, L(b_n) \sim (a_n+b_n)\, L(a_n+b_n)\, .
\]
\end{lemma}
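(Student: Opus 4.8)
The plan is to set $G(x) := x\,L(x)$, so that $G$ is non-decreasing and regularly varying of index $1$, and to prove the equivalent statement $G(a_n) + G(b_n) \sim G(a_n + b_n)$. Since the claim is symmetric in $a_n$ and $b_n$, I would relabel pointwise so that $M_n := \max\{a_n, b_n\}$ and $m_n := \min\{a_n, b_n\}$, whence $a_n + b_n = M_n + m_n$ and $G(a_n) + G(b_n) = G(M_n) + G(m_n)$. The hypothesis that at least one sequence diverges forces $M_n \to \infty$, while $t_n := m_n/M_n \in [0,1]$. Using that slowly varying functions are eventually positive, so $L(M_n) > 0$ for large $n$, the goal becomes
\[
\frac{G(M_n) + G(m_n)}{G(M_n + m_n)} = \frac{1 + t_n\,\frac{L(m_n)}{L(M_n)}}{(1 + t_n)\,\frac{L(M_n + m_n)}{L(M_n)}} \longrightarrow 1.
\]

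I would establish this by a subsequence argument: it suffices to show that every subsequence admits a further subsequence along which the ratio tends to $1$. Passing to a subsequence on which $t_n \to t^* \in [0,1]$, I split into two regimes. If $t^* > 0$, then $m_n = t_n M_n \to \infty$ and both $t_n$ and $1 + t_n$ eventually lie in a compact subset of $(0,\infty)$; the uniform convergence theorem for slowly varying functions then gives $L(m_n)/L(M_n) \to 1$ and $L(M_n + m_n)/L(M_n) \to 1$, so the displayed ratio converges to $(1 + t^*)/(1 + t^*) = 1$. If instead $t^* = 0$, I would argue directly on $G$: monotonicity gives $G(M_n) \le G(M_n + m_n) \le G((1 + t_n) M_n)$, and since $1 + t_n \to 1$ the right-hand side is asymptotic to $G(M_n)$ by slow variation of $L$, so $G(M_n + m_n) \sim G(M_n)$; it then only remains to show $G(m_n) = o(G(M_n))$.

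The main obstacle is precisely this last estimate when $t^* = 0$ but $m_n \to \infty$, since there $t_n \to 0$ leaves every compact subset of $(0,\infty)$ and the uniform convergence theorem no longer applies. Here I would invoke Potter's bound \eqref{potter-bound} for $L$, which yields, for $m_n \le M_n$ both large and any fixed $\delta \in (0,1)$,
\[
\frac{G(m_n)}{G(M_n)} = t_n\,\frac{L(m_n)}{L(M_n)} \le A\,t_n\,\Big(\tfrac{M_n}{m_n}\Big)^{\delta} = A\,t_n^{\,1-\delta} \longrightarrow 0 ;
\]
in the complementary case where $m_n$ stays bounded, $G(m_n)$ is bounded by monotonicity while $G(M_n) \to \infty$, so again $G(m_n) = o(G(M_n))$. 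Combining these, in the regime $t^* = 0$ the ratio equals $G(M_n)(1+o(1))/(G(M_n)(1+o(1)))\to 1$. Since both regimes deliver the limit $1$ along the sub-subsequence, the subsequence principle completes the proof.
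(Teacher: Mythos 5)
Your proof is correct, but it is organized differently from the paper's. The paper argues additively: after relabelling so that $a_n>b_n$, it writes $a_nL(a_n)+b_nL(b_n)=(a_n+b_n)L(a_n)-b_n\left(L(a_n)-L(b_n)\right)$, shows $(a_n+b_n)L(a_n)\sim(a_n+b_n)L(a_n+b_n)$ via Potter's bound \eqref{potter-bound}, and then proves the cross term is $o\left((a_n+b_n)L(a_n+b_n)\right)$ by splitting, for each fixed $\epsilon>0$, into the regimes $b_{n_k}\le\epsilon a_{n_k}$ and $b_{n_k}>\epsilon a_{n_k}$, using only Potter's bound and the monotonicity of $nL(n)$, ending with an error of order $\epsilon^{1/2}(a_n+b_n)L(a_n+b_n)$ and letting $\epsilon\to0$. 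You instead work multiplicatively with the ratio $\left(G(M_n)+G(m_n)\right)/G(M_n+m_n)$ and compactify via the subsequence principle applied to $t_n=m_n/M_n$: your regime $t^*>0$ is settled by the uniform convergence theorem for slowly varying functions, a tool the paper never invokes, while your regime $t^*=0$ coincides in substance with the paper's regime $b_{n_k}\le\epsilon a_{n_k}$ (Potter's bound plus monotonicity of $G$). So the underlying dichotomy --- smaller term comparable to, versus negligible against, the larger --- is the same in both arguments, but your limit-along-subsequences bookkeeping avoids the paper's explicit $\epsilon$-dependent constants such as $(1+\epsilon)^{1+\epsilon}/\epsilon^{\epsilon}$, at the cost of citing the UCT, whereas the paper stays entirely within Potter's bound, which it has stated as \eqref{potter-bound}. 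One small step you should justify explicitly: in the sub-case where $m_n$ stays bounded you assert $G(M_n)\to\infty$; this is true but rests on the standard fact that $xL(x)\to\infty$ when $L$ is slowly varying, which follows for instance from Potter's bound itself, since for a fixed $C$ with $L(C)>0$ one gets $L(M_n)\ge A^{-1}L(C)\left(C/M_n\right)^{\delta}$ with $\delta\in(0,1)$, whence $M_nL(M_n)\ge A^{-1}L(C)C^{\delta}M_n^{1-\delta}\to\infty$. It is worth noting that the paper never needs this fact, because its bound $b_{n_k}L(b_{n_k})\le\epsilon\left(a_{n_k}+b_{n_k}\right)L\left(\epsilon\left(a_{n_k}+b_{n_k}\right)\right)$ handles bounded and unbounded $b_{n_k}$ uniformly through monotonicity alone.
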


\begin{proof}
We begin writing
\begin{align*}
 a_n L(a_n)+ b_n L(b_n)
 = \left(a_n + b_n\right) L(a_n) - b_n\, \left(L(a_n)- L(b_n)\right).
\end{align*}
Without restriction of generality we assume that $a_n>b_n$ for all $n$. (Otherwise we could just define two new sequences $c_n:=\max\{a_n, b_n\}$ and $d_n:=\min\{a_n, b_n\}$ and continue with these sequences.)
By Potter's bound \eqref{potter-bound} we have for all $\epsilon>0$ that there exists $N\in\mathbb{N}$ such that for $n\geq N$ 
\[
L(a_n+b_n) \le \left(1+\epsilon\right) L(a_n)\, \left(\frac{a_n+b_n}{a_n}\right)^{\epsilon} \le (1+\epsilon)\, 2^\epsilon\, L(a_n)
\]
and
\[
L(a_n+b_n) \ge \left(1-\epsilon\right) L(a_n)\, \left(\frac{a_n}{a_n+b_n}\right)^{\epsilon}  \ge (1-\epsilon)\, 2^{-\epsilon}\, L(a_n)\, ,
\]
implying $L(a_n+b_n)\sim L(a_n)$ and thus $\left(a_n + b_n\right) L(a_n)\sim \left(a_n + b_n\right) L(a_n+b_n)$. 

Next, we have a closer look at $b_n\, \left(L(a_n)- L(b_n)\right)$ distinguishing two cases. For a given $\epsilon>0$ we first look at the subsequence $n_k$ fulfilling $b_{n_k}\leq \epsilon \, a_{n_k}$ for which we have 
\begin{align*}
 b_{n_k}\, \left(L(a_{n_k})- L(b_{n_k})\right)< \epsilon\, \left(a_{n_k} + b_{n_k}\right)  L(a_{n_k})\sim \epsilon\, \left(a_{n_k} + b_{n_k}\right)  L\left(a_{n_k}+b_{n_k}\right)\, .
\end{align*}
On the other hand, $n\, L(n)$ being non-decreasing and non-negative we have
\begin{align*}
 b_{n_k}\, \left(L(a_{n_k})- L(b_{n_k})\right)
 &\geq  - b_{n_k}\, L\left(b_{n_k}\right)
 \geq -\epsilon\, \left(a_{n_k}+b_{n_k}\right)\, L\left(\epsilon\, \left(a_{n_k}+b_{n_k}\right)\right) \\
&\sim -\epsilon\, \left(a_{n_k}+b_{n_k}\right)\, L\left(a_{n_k}+b_{n_k}\right).
\end{align*}
Next, we look at the subsequence fulfilling $b_{n_k}> \epsilon \, a_{n_k}$. Applying again \eqref{potter-bound}, we have that for $k$ big enough
\[
L(a_{n_k}+b_{n_k}) \le (1+\epsilon)\, L(b_{n_k})\, \left( \frac{a_{n_k}+b_{n_k}}{b_{n_k}} \right)^{\epsilon} \le \frac{(1+\epsilon)^{1+\epsilon}}{\epsilon^\epsilon}\, L(b_{n_k})
\]
and
\[
L(a_{n_k}+b_{n_k}) \ge \frac{1}{1+\epsilon}\, L(b_{n_k})\, \left( \frac{b_{n_k}}{a_{n_k}+b_{n_k}} \right)^{\epsilon} \ge \frac{\epsilon^\epsilon}{(1+\epsilon)^{1+\epsilon}}\, L(b_{n_k})\, .
\]
Since $L(a_{n_k}+b_{n_k}) \sim L(a_{n_k})$ we also have that for $k$ big enough
\[
\frac{1}{1+\epsilon} \le \frac{L(a_{n_k}+b_{n_k})}{ L(a_{n_k})} \le 1+\epsilon\, ,
\]
therefore there is a constant $c>0$ such that choosing $k$ big enough we have
\begin{align*}
b_{n_k}\, \left(L(a_{n_k})- L(b_{n_k})\right) & \le (a_{n_k}+b_{n_k}) \left( (1+\epsilon) L(a_{n_k}+b_{n_k}) - \frac{\epsilon^\epsilon}{(1+\epsilon)^{1+\epsilon}}\, L(a_{n_k}+b_{n_k}) \right)\\[0.2cm] 
& \le c\, \epsilon^{1/2}\, (a_{n_k}+b_{n_k})\, L(a_{n_k}+b_{n_k})
\end{align*}
for $\epsilon\in (0,1)$, and similarly
\begin{align*}
b_{n_k}\, \left(L(a_{n_k})- L(b_{n_k})\right) 
& \ge b_{n_k} \left( \frac{1}{1+\epsilon}\, L(a_{n_k}+b_{n_k}) - \frac{(1+\epsilon)^{1+\epsilon}}{\epsilon^\epsilon}\, L(a_{n_k}+b_{n_k}) \right) \\[0.2cm] 
& \ge - c\, \epsilon^{1/2}\, (a_{n_k}+b_{n_k})\, L(a_{n_k}+b_{n_k})
\end{align*}
for $\epsilon\in (0,1)$. 

Since $\epsilon$ was chosen arbitrarily, we have that 
\[
b_{n_k}\, \left(L(a_{n_k})- L(b_{n_k})\right)=o\left( (a_{n_k}+b_{n_k})\, L(a_{n_k}+b_{n_k})\right)\, ,
\]
and the lemma is proved.
\end{proof}

\begin{lemma}\label{lem: an bn asym1}
Let $(a_n)$ and $(b_n)$ be two sequences of non-negative reals with $a_n>b_n$ and $(a_n)$ tending to infinity. Further let $L$ be a non-negative normalized slowly varying function. Then 
 \begin{align}
  a_n L(a_n)- b_n L(b_n)\sim (a_n-b_n) L(a_n)\, .\label{eq: anbn2}
 \end{align}
\end{lemma}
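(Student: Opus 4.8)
The plan is to reduce the statement to a single smallness estimate. Writing $a_nL(a_n)-b_nL(b_n)=(a_n-b_n)L(a_n)+b_n\bigl(L(a_n)-L(b_n)\bigr)$ and dividing by $(a_n-b_n)L(a_n)$, the claim \eqref{eq: anbn2} becomes equivalent to showing that
\[
E_n:=\frac{b_n}{a_n-b_n}\Bigl(1-\frac{L(b_n)}{L(a_n)}\Bigr)\longrightarrow 0 .
\]
Setting $r_n:=b_n/a_n\in[0,1)$ we have $b_n/(a_n-b_n)=r_n/(1-r_n)$, so everything is governed by the competition between this factor and the deviation of $L(b_n)/L(a_n)$ from $1$. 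Since along any subsequence one may extract a further subsequence on which either $(b_n)$ stays bounded or $b_n\to\infty$, it suffices to prove $E_n\to 0$ in each of these two cases.

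The bounded case is routine: if $(b_n)$ lies in a compact subset of $(0,\infty)$ then $b_nL(b_n)$ stays bounded, while $a_nL(a_n)\to\infty$ because $x\mapsto xL(x)$ is regularly varying of index $1$; as $a_n-b_n\sim a_n$, both sides of \eqref{eq: anbn2} are then asymptotic to $a_nL(a_n)$. The substantial case is $b_n\to\infty$, and here I would use crucially that $L$ is \emph{normalized} slowly varying. Writing $L$ as in \eqref{eq:karamata-norm} with $\eta(t)\to 0$, fix $\epsilon\in(0,\tfrac12)$ and $T$ with $|\eta|\le\epsilon$ on $[T,\infty)$. For $b_n\le a_n$ both at least $T$ one gets $\bigl|\log\bigl(L(b_n)/L(a_n)\bigr)\bigr|\le\epsilon\log(a_n/b_n)$, hence the sharp two-sided bound $r_n^{\epsilon}\le L(b_n)/L(a_n)\le r_n^{-\epsilon}$, and therefore $\bigl|1-L(b_n)/L(a_n)\bigr|\le r_n^{-\epsilon}-1$. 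Combining this with the prefactor gives
\[
|E_n|\le\frac{r_n}{1-r_n}\,\bigl(r_n^{-\epsilon}-1\bigr)=\frac{r_n^{1-\epsilon}-r_n}{1-r_n}=:h_\epsilon(r_n).
\]

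It then remains to prove the elementary uniform estimate $\sup_{r\in(0,1)}h_\epsilon(r)\le M\epsilon$ for an absolute constant $M$ and all $\epsilon\le\tfrac12$. I would obtain this by differentiating in $\epsilon$: since $\partial_s h_s(r)=r^{1-s}(-\log r)/(1-r)$ and $r^{1-s}$ is nondecreasing in $s$ for $r\in(0,1)$, integrating over $s\in[0,\epsilon]$ yields $h_\epsilon(r)\le\epsilon\,r^{1-\epsilon}(-\log r)/(1-r)\le M\epsilon$ with $M:=\sup_{r\in(0,1)}\sqrt r\,(-\log r)/(1-r)<\infty$ (using $r^{1-\epsilon}\le\sqrt r$ for $\epsilon\le\tfrac12$). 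Because $b_n\to\infty$, for each $\epsilon$ the bound applies once $b_n\ge T$, so $\limsup_n|E_n|\le M\epsilon$; letting $\epsilon\to 0$ gives $E_n\to 0$, completing this case.

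The hard part, and the only genuinely delicate point, is the regime $r_n\to 1$, that is $a_n-b_n=o(a_n)$ with $b_n\to\infty$: there $r_n/(1-r_n)\to\infty$, so $E_n\to 0$ forces the difference $1-L(b_n)/L(a_n)$ to vanish at a matching rate. This is precisely where normalization is indispensable and where the bare Potter bound \eqref{potter-bound} is not enough: its multiplicative constant $A>1$ would leave a nonvanishing contribution of size $\sim r_n(A-1)/(1-r_n)$ as $r_n\to 1$, whereas the normalized representation delivers the $A=1$ version $L(b_n)/L(a_n)\le r_n^{-\epsilon}$, which is exactly what makes $h_\epsilon(r_n)\le M\epsilon$ hold uniformly in $r_n\in(0,1)$.
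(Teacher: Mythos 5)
Your proof is correct and follows essentially the same route as the paper's: the same decomposition $a_nL(a_n)-b_nL(b_n)=(a_n-b_n)L(a_n)+b_n\left(L(a_n)-L(b_n)\right)$, the same split into bounded and divergent $(b_n)$, and the same use of the normalized Karamata representation \eqref{eq:karamata-norm} to get the two-sided bounds $r_n^{\epsilon}\le L(b_n)/L(a_n)\le r_n^{-\epsilon}$. Your final estimate is even the same function in disguise --- with $x=a_n/b_n=1/r_n$ one has $h_\epsilon(r_n)=(x^{\epsilon}-1)/(x-1)$, which the paper bounds by $\epsilon$ via monotonicity on $(1,\infty)$, whereas you bound it by $M\epsilon$ by integrating in the exponent; both suffice.
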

\begin{proof}
First we look at a subsequence $n_j$ for which $b_{n_j}$ is bounded, in this case \eqref{eq: anbn2} immediately holds.
Hence, we may assume without loss of generality that also $b_n$ tends to infinity. We have
\begin{align*}
 a_n L(a_n)- b_n L(b_n)
 &= \left(a_n - b_n\right) L(a_n) + b_n\, \left(L(a_n)- L(b_n)\right)\\
 &= b_n\left(\frac{a_n}{b_n}-1\right)L(a_n)+b_n\, \left(1- \frac{L(b_n)}{L(a_n)}\right)\, L(a_n).
\end{align*}
Hence, we have to show that 
\begin{align*}
1- \frac{L(b_n)}{L(a_n)}=o\left(\frac{a_n}{b_n}-1\right).
\end{align*}
To do so, using \eqref{eq:karamata-norm} we note that 
\begin{align*}
 \frac{L(a_n)}{L(b_n)}
 &=\exp\left(\int_{b_n}^{a_n}\frac{\eta(t)}{t}\mathrm{d}t \right)
 \leq \exp\left(\sup_{y\in [b_n,a_n]}\eta(y) \int_{b_n}^{a_n}\frac{1}{t}\mathrm{d}t\right)
\end{align*}
and similarly
\begin{align*}
 \frac{L(a_n)}{L(b_n)}
 &=\exp\left(\int_{b_n}^{a_n}\frac{\eta(t)}{t}\mathrm{d}t \right)
 \geq \exp\left(\inf_{y\in [b_n,a_n]}\eta(y) \int_{b_n}^{a_n}\frac{1}{t}\mathrm{d}t\right).
\end{align*}
Since $(b_n)$ tends to infinity and $\eta$ tends to zero we can conclude that for all $\epsilon>0$ there exists $N\in\mathbb{N}$
such that for all $n\geq N$ we have 
\begin{align*}
\left(\frac{a_n}{b_n}\right)^{-\epsilon}
 &\leq \frac{L(a_n)}{L(b_n)}
 \leq \left(\frac{a_n}{b_n}\right)^{\epsilon}.
\end{align*}
Hence, for all $\epsilon>0$ there exists $N\in\mathbb{N}$ such that for all $n\geq N$ 
\begin{align*}
 \frac{\left|1- \frac{L(b_n)}{L(a_n)}\right|}{\left|\frac{a_n}{b_n}-1\right|}
 &\leq \frac{\left(\frac{a_n}{b_n}\right)^{\epsilon}-1}{\frac{a_n}{b_n}-1}.
\end{align*}
Since the function $g\colon (1,\infty)\to\mathbb{R}$ given by $(x^{\epsilon}-1)/(x-1)$ is monotonically decreasing and tends to $\epsilon$ for $x\to 1^+$, it follows that for all $\epsilon>0$ there exists $N\in\mathbb{N}$ such that for all $n\geq N$  
\begin{align*}
\left|1- \frac{L(b_n)}{L(a_n)}\right| \leq \epsilon\, \left|\frac{a_n}{b_n}-1\right|\, .
\end{align*}
Since $\epsilon$ was arbitrary, the statement is proven.
\end{proof}

\begin{lemma} \label{normslow}
 Let $L$ be a slowly varying function at infinity. Then there exist two normalised slowly varying functions $L^-$ and $L^+$ such that $L^-(n)\leq L(n)\leq L^+(n)$ for all $n\in \mathbb{N}$ and $L^-(n)\sim L(n)\sim L^+(n)$ as $n\to \infty$. 
\end{lemma}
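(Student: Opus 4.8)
The plan is to first reduce the problem to controlling a convergent multiplicative error, and then to produce the two bounding functions by a smoothing procedure carried out in the logarithmic variable. By the remark following \eqref{eq:karamata-norm} (a consequence of Karamata's representation \cite[Thm.~1.3.1]{reg-var-book}), there is a normalised slowly varying function $\widetilde L$ with $L(x)\sim \widetilde L(x)$. Choosing a base point $\kappa$ large enough that $L,\widetilde L>0$ on $[\kappa,\infty)$, set $r(x):=L(x)/\widetilde L(x)$, so that $r(x)\to 1$ and $r$ is bounded and bounded away from $0$ on $[\kappa,\infty)$. Writing the desired functions as $L^{\pm}=\rho^{\pm}\,\widetilde L$, the problem reduces to finding normalised slowly varying $\rho^{\pm}$ with $\rho^-(x)\le r(x)\le \rho^+(x)$ and $\rho^{\pm}(x)\to 1$: indeed then $L^-\le L\le L^+$, while $L^{\pm}/L=\rho^{\pm}/r\to 1$ gives the asymptotic equivalence, and a direct computation $x(\log L^{\pm})'(x)=x(\log\rho^{\pm})'(x)+x(\log\widetilde L)'(x)\to 0$ shows that $L^{\pm}$ is again normalised slowly varying.

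The core step is the construction of $\rho^+$ (that of $\rho^-$ being symmetric). Passing to the logarithmic variable, put $R(y):=r(e^y)$, so that $R(y)\to 1$. The naive monotone envelope $\Omega(y):=\sup_{s\ge y}R(s)$ is non-increasing, satisfies $\Omega\ge R$ and $\Omega(y)\to 1$, but is in general too rough to be normalised, since its derivative need not vanish. I would therefore smooth it by a backward moving average of fixed width: for a fixed $\beta>0$ set
\[
P(y):=\frac{1}{\beta}\int_{y-\beta}^{y}\Omega(s)\,\mathrm{d}s .
\]
Since $\Omega$ is non-increasing, $P(y)\ge \Omega(y)\ge R(y)$; since $\Omega(y)\to 1$, also $P(y)\to 1$; and $P$ is absolutely continuous with $P'(y)=\beta^{-1}\big(\Omega(y)-\Omega(y-\beta)\big)\to 0$. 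Defining $\rho^+(x):=P(\log x)$ then gives $x(\log\rho^+)'(x)=P'(\log x)/P(\log x)\to 0$, so $\rho^+$ is normalised slowly varying, and $\rho^+(x)\ge r(x)$ with $\rho^+(x)\to 1$ for all $x\ge \kappa e^{\beta}$. Replacing $\sup$ by $\inf$ (so that the envelope $\omega(y)=\inf_{s\ge y}R(s)$ is non-decreasing and its backward average lies below it) yields $\rho^-$ with the reversed inequality.

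The remaining point is the requirement that the inequalities hold at \emph{every} $n\in\N$. The construction delivers $L^-(x)\le L(x)\le L^+(x)$ for all $x\ge X_1:=\kappa e^{\beta}$; as only finitely many integers lie below $X_1$, one modifies $L^{\pm}$ on the compact interval $(0,X_1]$, keeping them positive and locally absolutely continuous and making them large, respectively small, enough at the finitely many exceptional integers, which changes neither the slow variation nor the asymptotic equivalence, both being properties as $x\to\infty$. I expect the main obstacle to be exactly the tension highlighted above: a constant multiple of $\widetilde L$ is normalised but fails the asymptotic equivalence, whereas the sharp envelopes $\Omega,\omega$ give the equivalence but violate the normalisation condition $x(\log L^{\pm})'\to 0$; the logarithmic backward moving average is the device that reconciles the two, its derivative being a difference of tail values of a convergent monotone function and therefore vanishing.
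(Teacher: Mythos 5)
Your proof is correct, and it reaches the lemma by a genuinely different construction than the paper's, although the opening reduction is the same in spirit: the paper also uses Karamata's representation \eqref{eq:karamata} to localise the problem in the convergent factor, building normalised $c^{\pm}$ with $c^-\le c\le c^+$ and $c^{\pm}(n)\sim c(n)$ and then multiplying back by the (already normalised) exponential factor, which is equivalent to your step of dividing out a normalised $\widetilde L\sim L$ and sandwiching the ratio $r=L/\widetilde L\to 1$. Where you diverge is in how the normalised envelopes are manufactured. The paper's construction is discrete and explicit: it anchors $c^-$ at the integer ``record'' points $\Gamma=\{n\in\N:\,c(n)<c(k)\ \text{for all}\ k>n\}$ and interpolates between consecutive records with pieces of $(\log x)^{1/2}$, whose derivative $\tfrac12(\log x)^{-1/2}x^{-1}=o(1/x)$ yields the normalisation condition while the unboundedness of $(\log x)^{1/2}$ ensures each record value is eventually reached; it needs a separate trivial case when $\Gamma$ is finite, and some bookkeeping at the record points. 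You instead take the monotone tail envelopes $\Omega(y)=\sup_{s\ge y}R(s)$, $\omega(y)=\inf_{s\ge y}R(s)$ in the logarithmic variable and mollify them with a backward moving average of fixed width $\beta$: monotonicity hands you the one-sided bounds $P\ge\Omega\ge R$ and $p\le\omega\le R$ for free, and convergence of the envelopes gives simultaneously $P,p\to 1$ and $P'(y)=\beta^{-1}\bigl(\Omega(y)-\Omega(y-\beta)\bigr)\to 0$ (valid a.e., which suffices since $P$ is locally Lipschitz, hence locally absolutely continuous), i.e.\ exactly the form \eqref{eq:karamata-norm} after substituting $y=\log x$. Your route buys uniformity: no case distinction, no record points, no ad hoc interpolant, and a transparent explanation of why the envelope property and the normalisation $x(\log L^{\pm})'(x)\to 0$ are compatible. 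The paper's route buys a fully explicit piecewise formula without introducing $\widetilde L$ or the ratio $r$. Your final remark about fixing the finitely many integers below $X_1$ by modifying $L^{\pm}$ on a compact set is also legitimate (and slightly more careful than the paper, which simply defines $c^-$ only on $[\gamma_1,\infty)$ on the grounds that only the limit behaviour matters), since normalised slow variation constrains the function only near infinity provided positivity and local absolute continuity are preserved.
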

\begin{proof}
 Let $L$ be represented as in \eqref{eq:karamata} for fixed $c(x)$, $\kappa$ and $\eta(x)$. Our approach is to find two normalised slowly varying functions $c^-, c^+$ fulfilling $c^-\leq c\leq c^+$ and $c^-(n)\sim c(n)\sim c^+(n)$. Then we may set 
 \[
 L^{\pm}(n)=c^{\pm}(n)\, \exp\left(\int_{\kappa}^n\eta(t)/t\mathrm{d}t\right)\, .
 \] 
Since a product of two normalised slowly varying functions is still normalised slowly varying, the functions $L^{\pm}$ fulfil all the required properties. We will in the following only give the construction of $c^-$ as the construction of $c^+$ follows analogously. 
 
We notice that a function $\ell$ is normalised slowly varying if it fulfils $\ell'(x)=o\left(\ell(x)/x\right)$ almost everywhere. This follows immediately by taking the derivative of \eqref{eq:karamata-norm}, but see also \cite[p.\ 15]{reg-var-book}. We construct a function $c^-$ on a line $[\gamma_1,\infty)$ which is continuous and piecewise differentiable. 

In order to define $c^-$ let $\Gamma\coloneqq \left\{n\in\mathbb{N}\colon c(n)<c(k)\text{ for all }k>n\right\}$ and let $(\gamma_n)$ be the ordered sequence of elements in $\Gamma$, that is we have $\gamma_1<\gamma_2<\ldots$ and $c(\gamma_1)<c(\gamma_2)<\ldots$. The set $\Gamma$ contains infinitely many elements unless $\min\set{c(n),C}=C$ for sufficiently large $n$ which is a trivial case in which we might simply set $c^-(n)=\min\set{c(n),C}$. In this case $(c^-)'(x)=0$ for $x$ sufficiently large and thus $(c^-)'(x)\leq (x\log x)^{-1}$ holds immediately.
 
 As we are only interested in the limit behaviour, it is sufficient to define $c^-$ at $\mathbb{R}_{\geq\gamma_1}$ and we set 
 $c^-(\gamma_i)=c(\gamma_i)$, for all $i\in\mathbb{N}$.
 For $x\in (\gamma_i, \gamma_i+1)$ we set 
 \begin{align*}
  c^-(x):=\max\left\{(\log\, x)^{1/2}+c(\gamma_i)-(\log\, \gamma_i)^{1/2} ,c(\gamma_{i+1})\right\}.
 \end{align*}
The function $c^-$ is then either piecewise constant and on those parts its derivative equals zero, or has derivative equal to $1/2 (\log\, x)^{-1/2}\, x^{-1}=o(x^{-1})$.  This is sufficient since in our case, $c^-$ is bounded by $C$. Thus, at those points at which $c^-$ is differentiable, it fulfils $\left(c^-\right)'(x)=o\left(c^-(x)/x\right)$ and there are only countably many points on which it is not differentiable.  On the other hand, since $(\log\, x)^{1/2}$ tends to infinity, $c^-$ tends to $C$.
\end{proof}

\begin{lemma}\label{remarkii}
Let $h$ be a slowly varying function with $h(t)\ge 1+h_0$ for all $t$ and for some $h_0>0$, and let $L$ be a slowly varying function written as in \eqref{eq:karamata} with $\eta(t) = o(1/\log \xi(t))$ as $t\to \infty$. Then for all functions $\tilde h$ which satisfy $c\le \tilde h(n)\le h(n)$ for some constant $c>0$ and all $n$, it holds $L(n\, \tilde h(n)) \sim L(n)$ as $n\to \infty$.
\end{lemma}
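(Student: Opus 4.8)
The plan is to read off the statement from the Karamata representation \eqref{eq:karamata} of $L$. Writing $L(x)=c(x)\exp\bigl(\int_\kappa^x \eta(t)/t\,\mathrm dt\bigr)$ with $c(x)\to C$ and $\eta(t)\to 0$, I would first record
\[
\frac{L(n\,\tilde h(n))}{L(n)}=\frac{c(n\,\tilde h(n))}{c(n)}\,\exp\left(\int_{n}^{n\,\tilde h(n)}\frac{\eta(t)}{t}\,\mathrm dt\right).
\]
Since $\tilde h(n)\ge c>0$, the argument $n\,\tilde h(n)$ diverges, so the prefactor $c(n\,\tilde h(n))/c(n)\to C/C=1$. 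Everything therefore reduces to showing that the exponent tends to $0$, i.e. that $\int_{n}^{n\,\tilde h(n)}\eta(t)/t\,\mathrm dt\to 0$ as $n\to\infty$.

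To control the exponent I would enclose the integration range: because $c\le\tilde h(n)\le h(n)$, it lies between $nc'$ and $nh(n)$ with $c'=\min\{c,1\}$, so the absolute value of the exponent is at most $\int_{nc'}^{nh(n)}|\eta(t)|/t\,\mathrm dt$. On the piece $[nc',n]$ the ratio of the endpoints is at most $1/c'$, so its contribution is bounded by $\sup_{t\ge nc'}|\eta(t)|\cdot\log(1/c')$, which vanishes since $\eta(t)\to 0$. The genuine difficulty is the upper piece $[n,nh(n)]$, whose logarithmic length $\log h(n)$ may diverge, so one cannot merely bound $|\eta|$ by a small constant times the log-length. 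Here I would invoke the hypothesis $\eta(t)=o(1/\log h(t))$ in the form: for every $\delta>0$ there is $N$ with $|\eta(t)|\le\delta/\log h(t)$ for $t\ge N$, whence
\[
\int_{n}^{nh(n)}\frac{|\eta(t)|}{t}\,\mathrm dt\le\delta\int_{n}^{nh(n)}\frac{\mathrm dt}{t\,\log h(t)}.
\]

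The heart of the proof, and the main obstacle, is to show that this last integral stays bounded, which amounts to comparing $\log h(t)$ with $\log h(n)$ uniformly over $t\in[n,nh(n)]$. I would extract this from Potter's bound \eqref{potter-bound}: fixing $\delta_0=\tfrac14$ and $A>1$, for $t\in[n,nh(n)]$ we have $t/n\le h(n)$, hence $h(t)\ge A^{-1}h(n)(n/t)^{\delta_0}\ge A^{-1}h(n)^{1-\delta_0}$, so $\log h(t)\ge(1-\delta_0)\log h(n)-\log A\ge\tfrac12\log h(n)$ once $n$ is large; the lower bound $h\ge 1+h_0$ guarantees $\log h(n)$ is bounded below by a positive constant and, in the unbounded case, diverges, which is exactly what makes this estimate usable. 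Consequently $1/\log h(t)\le 2/\log h(n)$ and
\[
\int_{n}^{nh(n)}\frac{\mathrm dt}{t\,\log h(t)}\le\frac{2}{\log h(n)}\int_{n}^{nh(n)}\frac{\mathrm dt}{t}=\frac{2}{\log h(n)}\,\log h(n)=2 .
\]
When $h$ is bounded the same integral is trivially at most $\log h(n)/\log(1+h_0)$, itself bounded, so in all cases the upper piece is $O(\delta)$. Taking $\limsup_{n\to\infty}$ and then letting $\delta\to 0$ forces the exponent to $0$, giving $L(n\,\tilde h(n))\sim L(n)$ and completing the argument.
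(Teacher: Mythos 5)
Your route is genuinely different from the paper's. The paper never estimates the Karamata integral directly: it verifies that $\phi(t)=\log(h(e^t))$ is self-controlled and that $1/\phi$ is locally integrable, invokes the characterization of super-slowly varying functions (Lemma \ref{lem:super-slow-rep}, i.e.\ \cite[Thm.~3.12.5]{reg-var-book}) to conclude that $L$ is super-slowly varying with rate function $h$, and then writes $\tilde h(n)=h(n)^{\delta(n)}$ with $\delta(n)\in[0,1]$, exploiting the uniformity in $\delta$ from Definition \ref{def-ssv}. Your direct attack on $\int_n^{n\tilde h(n)}\eta(t)/t\,\rd t$ is more elementary and self-contained; the split into the pieces $[nc',n]$ and $[n,nh(n)]$, together with the Potter-bound comparison of $\log h(t)$ against $\log h(n)$ on the upper piece, is essentially the computation that the citation of \cite{reg-var-book} hides.

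There is, however, one incorrect step. Your estimate $\log h(t)\ge\tfrac34\log h(n)-\log A\ge\tfrac12\log h(n)$ requires $\log h(n)\ge 4\log A$, which you justify by ``once $n$ is large''; this presumes $h(n)\to\infty$, and your closing dichotomy (either $h$ bounded, or $h(n)$ diverges) rests on the same presumption. That dichotomy is not exhaustive: an unbounded slowly varying function need not tend to infinity. For instance $h(x)=1+h_0+\exp\bigl((\log x)^{1/3}\cos((\log x)^{1/3})\bigr)$ is slowly varying, bounded below by $1+h_0$, and unbounded, yet $\liminf_{x\to\infty}h(x)=1+h_0$; along the indices where $h(n)$ stays near $1+h_0$, your Potter estimate only yields $\log h(t)\ge\tfrac34\log h(n)-\log A$, which can be negative and hence useless. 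The repair is local: make the dichotomy pointwise in $n$ rather than global in $h$. For each $n$, either $h(n)\ge A^4$, in which case your Potter argument gives $\int_n^{nh(n)}\frac{\rd t}{t\,\log h(t)}\le\frac{2}{\log h(n)}\log h(n)=2$; or $h(n)<A^4$, in which case the trivial bound $\log h(t)\ge\log(1+h_0)$ gives $\int_n^{nh(n)}\frac{\rd t}{t\,\log h(t)}\le\frac{\log h(n)}{\log(1+h_0)}<\frac{4\log A}{\log(1+h_0)}$. In both cases the integral is bounded by a constant independent of $n$, so the upper piece is $O(\delta)$ uniformly and the rest of your argument goes through unchanged.
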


The proof of the lemma follows by applying results in the theory of functions with regular variations for which we first need to introduce some definitions.

\begin{definition}\label{def-ssv}
Given a function $h: \R_{>0} \to \R_{>0}$, a function $\ell:\R_{>0}\to \R$ is called \emph{super-slowly varying at infinity with rate function} $h$ if  
 \[
  \lim_{x\to\infty}\frac{\ell\left(x\, h(x)^{\delta}\right)}{\ell(x)}=1 \text{ uniformly in }0\leq \delta\leq 1.
 \]
\end{definition}

\begin{definition}\label{def-self-cont}
A function $\phi: \R_{>0} \to \R$ is called \emph{self-controlled} if there exist positive constants $\gamma,\Gamma, T$ such that $\gamma \phi(t)\le \phi(t+\delta\phi(t))\le \Gamma \phi(t)$, for all $t\ge T$ and $\delta\in[0,1]$.
\end{definition}

 \begin{lemma}[{\cite[Thm.~3.12.5]{reg-var-book}}]\label{lem:super-slow-rep}
 Let $h: \R_{>0} \to \R_{>0}$ be such that $\phi(t):=\log (h(e^t))$ is self-controlled and there exists $T>0$ such that $1/\phi$ is locally integrable over $[T, \infty)$. 
  Then $L:\R_{>0}\to \R$ is super-slowly varying with rate function $h$ if and only if 
  $L$ can be written as in \eqref{eq:karamata} with $\eta(t)=o\left(1/\log h(t)\right)$ as $t\to \infty$.
\end{lemma}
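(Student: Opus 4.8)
The plan is to reduce the multiplicative statement to an additive one via the substitution $x=e^t$, and then to flatten the variable rate $\phi$ to a constant rate by a further change of variable, thereby turning super-slow variation into ordinary Karamata slow variation. First I would pass to logarithmic variables: writing $M(t):=\log L(e^t)$ (well defined for large $t$) and recalling $\phi(t)=\log(h(e^t))$, the identity $e^t h(e^t)^\delta = e^{t+\delta\phi(t)}$ shows that $L$ is super-slowly varying with rate function $h$ (Definition \ref{def-ssv}) if and only if
\[
 M(t+\delta\phi(t))-M(t)\to 0\quad\text{uniformly in } \delta\in[0,1], \text{ as } t\to\infty,
\]
while, under $u=e^s$, the Karamata representation \eqref{eq:karamata} with $\eta(u)=o(1/\log h(u))$ becomes
\[
 M(t)=a(t)+\int_T^t \tilde\eta(s)\,\mathrm{d}s, \qquad a(t)\to\text{const}, \qquad \tilde\eta(s):=\eta(e^s)=o\bigl(1/\phi(s)\bigr).
\]
So the whole lemma is the equivalence of these two additive statements.

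The easy implication ($\Leftarrow$) I would prove directly. Given the representation, decompose
\[
 M(t+\delta\phi(t))-M(t)=\bigl(a(t+\delta\phi(t))-a(t)\bigr)+\int_t^{t+\delta\phi(t)}\tilde\eta(s)\,\mathrm{d}s.
\]
The first bracket tends to $0$ uniformly in $\delta$ since $a$ is asymptotically constant and $t+\delta\phi(t)\ge t$. For the integral, fix $\epsilon>0$; for $s$ large $|\tilde\eta(s)|\le \epsilon/\phi(s)$, and self-controlledness (Definition \ref{def-self-cont}) yields $\phi(s)\ge\gamma\,\phi(t)$ for every $s=t+\delta'\phi(t)$ with $\delta'\in[0,1]$, hence for all $s$ in the range of integration. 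Thus the integral is bounded in modulus by $(\epsilon/(\gamma\phi(t)))\cdot\delta\phi(t)\le\epsilon/\gamma$, uniformly in $\delta$. This is precisely where self-control enters: it forbids $\phi$ from collapsing over an interval of its own length.

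The hard implication ($\Rightarrow$) is the crux, and I would straighten the rate $\phi$ to a constant. Since $1/\phi$ is locally integrable over $[T,\infty)$, set $\Psi(t):=\int_T^t \mathrm{d}s/\phi(s)$, which is continuous and strictly increasing; let $\Theta:=\Psi^{-1}$ and $N(u):=M(\Theta(u))$. Self-control gives $\gamma\phi(t)\le\phi(s)\le\Gamma\phi(t)$ for $s\in[t,t+\delta\phi(t)]$, so
\[
 \Psi(t+\delta\phi(t))-\Psi(t)=\int_t^{t+\delta\phi(t)}\frac{\mathrm{d}s}{\phi(s)}\in\Bigl[\tfrac{\delta}{\Gamma},\tfrac{\delta}{\gamma}\Bigr],
\]
so a shift of $\delta\phi(t)$ in $t$ becomes a shift of bounded, and bounded-away-from-zero, size in $u$. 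For fixed $v_0\in(0,1/\Gamma]$ and each $t$, the intermediate value theorem provides $\delta(t)\in[0,1]$ with $\Psi(t+\delta(t)\phi(t))-\Psi(t)=v_0$; the uniformity over $\delta$ in the hypothesis then gives $N(u+v_0)-N(u)=M(t+\delta(t)\phi(t))-M(t)\to 0$, and chaining covers all $v_0>0$. Hence $N$ is (additively) slowly varying, so the measurable Karamata representation theorem yields $N(u)=b(u)+\int_{u_0}^u\zeta(v)\,\mathrm{d}v$ with $b(u)\to\text{const}$ and $\zeta(v)\to 0$. Undoing the substitution via $v=\Psi(s)$ gives $M(t)=b(\Psi(t))+\int_T^t \zeta(\Psi(s))\,\phi(s)^{-1}\,\mathrm{d}s$, whence $a(t):=b(\Psi(t))$ is asymptotically constant and $\tilde\eta(s):=\zeta(\Psi(s))/\phi(s)=o(1/\phi(s))$, which is exactly the required rate; translating back through $u=e^s$ produces \eqref{eq:karamata} with $\eta(u)=o(1/\log h(u))$.

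The main obstacle is this converse, and within it the verification that $N$ is genuinely slowly varying: this needs both the uniformity over $\delta$ in the hypothesis (so that the $t$-dependent choice $\delta(t)$ is admissible) and the two-sided self-control bound (so that the induced $u$-shifts neither collapse to $0$ nor escape to infinity). A minor point I would also record is that the change of variable requires $\Psi(t)\to\infty$ so that $N$ lives on a neighbourhood of infinity; this holds because $\phi(t)=\log(h(e^t))$ grows at most like iterated logarithms of $e^t$ when $h$ is slowly varying, so that $\int^\infty \mathrm{d}s/\phi(s)=\infty$.
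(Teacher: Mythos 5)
The paper offers no proof of this lemma at all: it is quoted verbatim from \cite[Thm.~3.12.5]{reg-var-book}, so your argument can only be compared with the standard one in Bingham--Goldie--Teugels, and in fact it follows that canonical strategy: pass to additive variables $M(t)=\log L(e^t)$, so that super-slow variation with rate $h$ (Definition \ref{def-ssv}) becomes $M(t+\delta\phi(t))-M(t)\to 0$ uniformly in $\delta\in[0,1]$, then flatten the variable rate by the time change $\Psi(t)=\int_T^t \mathrm{d}s/\phi(s)$, reducing everything to ordinary additive slow variation and the classical Karamata representation theorem. Both directions are correctly executed. In $(\Leftarrow)$, the two-sided self-control bound $\gamma\phi(t)\le\phi(s)\le\Gamma\phi(t)$ for $s\in[t,t+\phi(t)]$ (Definition \ref{def-self-cont}) is exactly what bounds $\bigl|\int_t^{t+\delta\phi(t)}\tilde\eta(s)\,\mathrm{d}s\bigr|$ by $\epsilon/\gamma$ uniformly in $\delta$. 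In $(\Rightarrow)$, the same bound gives $\Psi(t+\delta\phi(t))-\Psi(t)\in[\delta/\Gamma,\delta/\gamma]$, the intermediate-value choice $\delta(t)$ is legitimate precisely because the hypothesis is uniform in $\delta$, negative shifts of $N=M\circ\Psi^{-1}$ follow from positive ones by reindexing the base point, and undoing the additive representation through $v=\Psi(s)$ yields $\tilde\eta(s)=\zeta(\Psi(s))/\phi(s)=o(1/\phi(s))$, i.e.\ $\eta(u)=o(1/\log h(u))$, as required. Two tacit standing assumptions should be recorded: $L$ is measurable and eventually positive (needed for $M$ and for the representation theorem), and $\phi>0$; both hold in the paper's application in Lemma \ref{remarkii}.

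The one defective step is your justification that $\Psi(t)\to\infty$, which the converse direction genuinely needs. You argue that ``$\phi$ grows at most like iterated logarithms of $e^t$ when $h$ is slowly varying'', but slow variation of $h$ is not among the hypotheses of the lemma, and even granting it the growth claim is false: $h(x)=\exp(\log x/\log\log x)$ is slowly varying with $\phi(t)=t/\log t$, far beyond any iterated logarithm (the correct general bound under slow variation is only $\phi(t)=o(t)$, which does still give $\int^\infty \mathrm{d}s/\phi(s)=\infty$). More to the point, divergence follows from the stated hypotheses alone. Set $s_0=T$ and $s_{k+1}=s_k+\phi(s_k)$; the self-control upper bound gives $\phi(s)\le\Gamma\phi(s_k)$ on $[s_k,s_{k+1}]$, hence
\[
\Psi(s_{k+1})-\Psi(s_k)=\int_{s_k}^{s_k+\phi(s_k)}\frac{\mathrm{d}s}{\phi(s)}\ge\frac{1}{\Gamma},
\]
so $\Psi(s_k)\ge k/\Gamma$. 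The sequence $(s_k)$ cannot converge to a finite $s^*$, since then $\int_T^{s^*}\mathrm{d}s/\phi(s)=\lim_k\Psi(s_k)=\infty$ would contradict the local integrability of $1/\phi$ on $[T,\infty)$. Hence $s_k\to\infty$ and, $\Psi$ being nondecreasing, $\Psi(t)\to\infty$. With this repair your proof is complete and self-contained.
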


\noindent \emph{Proof of Lemma \ref{remarkii}.}
Let us use Lemma \ref{lem:super-slow-rep}. Since $h$ is a slowly varying function and $h(t)\ge 1+h_0$ for all $t$, the function $\phi(t):=\log (h(e^t))$ is such that $1/\phi$ is locally integrable. Moreover we notice that $\phi(t+\delta\phi(t))
  = \log (h(e^t\, h(e^t)^{\delta}))$ and by using Potter's bound \eqref{potter-bound} for $h$ we obtain for all $\epsilon>0$ that there exists $T>0$ such that for $t>T$ we have 
\[
\frac{h(e^t\,h(e^t)^{\delta})}{h(e^t)}\le (1+\epsilon)\, \max \set{h(e^t)^{\delta \epsilon}\, ,\, h(e^t)^{-\delta \epsilon}}\, .
\]
Since $h(t)\ge 1+h_0$, we have $\phi(t)\ge \log(1+h_0)$, hence for all $\epsilon\in (0,h_0)$
\[
 \phi(t+\delta\phi(t)) \le \log (1+\epsilon) + (1+\delta \epsilon)\, \phi(t) \le (2+\delta \epsilon)\, \phi (t)\, .
\]
Analogously, we obtain $\phi(t+\delta\phi(t))\ge \left(\left(1- \delta\epsilon\right)+\log (1-\epsilon)/\log (1+h_0)\right)\phi(t)$. If we choose $\epsilon$ sufficiently small, then the factor before $\phi(t)$ is bounded away from zero which implies that $\phi$ is self-controlled.

Then by Lemma \ref{lem:super-slow-rep}, we obtain that if $L$ is a slowly varying function written as in \eqref{eq:karamata} with $\eta(t) = o(1/\log h(t))$ as $t\to \infty$, it is super-slowly varying with rate function $h$. Let now $\tilde h$ be a function satisfying $c\le \tilde h(n) \le h(n)$ for all $n$ and for a constant $c>0$. Up to dividing by $c$ and using that $\eta(t) = o(1/\log (h(t)/c))$ as $t\to \infty$, we can set $c=1$. Then we can write $\tilde h(n) = h(n)^{\delta(n)}$ where $\delta(n) \in [0,1]$ for all $n$. By Definition \ref{def-ssv} it follows
 \[
\left| \frac{L(n\, \tilde h(n))}{L(n)}-1\right| = \left| \frac{L(n\,  h(n)^{\delta(n)})}{L(n)}-1\right| \le \sup_{\delta\in[0,1]} \left| \frac{L( n\, h(n)^\delta)}{L(n)}-1\right| \to 0 \quad \text{as $n\to \infty$.}  
 \]
and the lemma is proved.
\qed

\end{document}